\numberwithin{equation}{section}
\newcommand\Frob{{\mathrm{Frob}}}
\newcommand\Br{{\mathrm{Br}}}
\newcommand{\Z}{\mathbb{Z}}
\newcommand{\F}{\mathbb{F}}
\newcommand{\Ga}{\Gamma}
\newcommand{\ga}{\gamma}
\newcommand{\de}{\delta}
\newcommand{\C}{\mathbb{C}}
\newcommand\A{\mathbb{A}}
\newcommand\N{\mathbb{N}}
\newcommand\sO{\mathcal{O}}
\newcommand\GL{{\mathrm{GL}}}
\newcommand\HH{{\mathrm H}}
\newcommand{\Q}{\mathbb{Q}}
\newcommand\Gal{{\mathrm{Gal}}}
\newcommand\Inf{{\mathrm{Inf}}}
\newcommand\Ind{{\mathrm{Ind}}}
\newcommand\Aut{{\mathrm{Aut}}}
\newtheorem{thm}{Theorem}[section]
\newtheorem{theorem}[thm]{Theorem}
\newtheorem{cor}[thm]{Corollary}
\newtheorem{prop}[thm]{Proposition}
\newtheorem{lemma}[thm]{Lemma}
\theoremstyle{definition}
\newtheorem{definition}[thm]{Definition}
\newtheorem{remark}[thm]{Remark}
\theoremstyle{definition}
\theoremstyle{remark}
\theoremstyle{remark}
\def\imod#1{\allowbreak\mkern10mu({\operator@font mod}\,\,#1)}
	\title{Supercuspidal ramifications and traces of adjoint lifts}
	\author{Debargha Banerjee}
	\author{Tathagata Mandal}
	\address{INDIAN INSTITUTE OF SCIENCE EDUCATION AND RESEARCH, PUNE, INDIA}
\begin{document}

\begin{abstract}
   We write down 
    the local Brauer classes of the endomorphism
	algebras of motives attached to  non-CM Hecke eigenforms for {\bf all}
	supercuspidal primes in terms of traces of adjoint lifts at auxiliary primes. 
	We give an alternative proof of the ramification formulae for odd primes 
	obtained by Bhattacharya-Ghate
	and write down the ramification formulae for odd unramified 
	supercuspidal primes of level zero also removing a mild hypothesis of
	them.
	We also give a complete description of ramifications for $p=2$
	where the local Galois representation can be non-dihedral. In the process, we write 
	down the inertial Galois representations even for $p=2$ also generalizing similar description 
	of the same for odd primes by Ghate-M\'ezard.  
    Some numerical examples using {\texttt Sage} and {\texttt LMFDB} are provided
    supporting some of our theorems.   
\end{abstract}
	
	\subjclass[2010]{Primary: 11F11, Secondary: 11F75, 11F80, 11F30}
	\keywords{Modular forms, Galois representations, Brauer groups, Local symbols}
	\maketitle

 \section{Introduction}
  Let $f(z)=\sum_{n \geq 1} a_n q^n \in S_k(N, \epsilon) $ be a non-CM 
  Hecke eigenform  of weight $k \geq 2$, level $N \geq 1$ and nebentypus $\epsilon$.
  One knows that the number field $E=\Q(\{a_n\})$ 
  is either a totally real or a CM number field. 
  For $k=2$, let $M_f$ denote the abelian variety attached to $f$ 
  by Shimura \cite{MR0314766}. 
  For $k >2$,  we also denote by $M_f$ the Grothendieck motive over $\Q$ with coefficients in $E$ associated to $f$ by Scholl \cite{MR1047142}. 
  The $\lambda$-adic realization of this motive produces 
  a $\lambda$-adic Galois representation $\rho_f$ associated to the modular form $f$ by a well-known theorem of Deligne [cf.  Section~\ref{galrep}].  
  Let $X_f$ denote the $\Q$-algebra of endomorphisms of $M_f$ defined by 
  $X=X_f:= \text{End}_{\overline{\Q}}(M_f) {\otimes}_{\Z} \Q$.

  Consider the totally real subfield $F$ of $E$ generated by the elements 
  $a_p^2 \epsilon(p)^{-1}$, for all $p \nmid N$. 
  The algebra $X$ has the structure of an explicit crossed product algebra over $F$
  due to Momose and Ribet \cite{MR594532} 
  in weight two and Ghate and his collaborators 
  \cite{MR2038777}, \cite{MR2146605}, \cite{MR1650241} in higher weights.
  One knows that the class $[X] \in
  {}_2\Br(F):={}_2\mathrm{H}^2(\Gal(\bar{F}|F), \bar{F}^\times)$, 
  the $2$-torsion part of the Brauer group of $F$.
  In \cite{MR594532}, Ribet wondered if it is possible to determine 
  the local Brauer classes by pure thought. 
  We study the Brauer classes of $X$ locally by the following 
  exact sequence:
  $$0 \to {}_2{\Br(F)} \to \oplus_v 
  \,\,{}_2{\Br(F_v)} \to \Z/2 \to 0 $$
  where $v$ runs over all primes in $F$. It is well-known that $X_v=X \otimes_F F_v$
  is a central simple algebra over $F_v$ and the class of $X_v$ is a $2$-torsion
  element in the Brauer group $\Br(F_v)$ of $F_v$, that is, the class 
  $[X_v] \in {}_2\Br(F_v) \cong \Z/2$. 
  We say $X_v$ is unramified if the class of $X_v$ is trivial
  and ramified if the class is non-trivial.
  At the infinite places $v$, $X$ is totally indefinite if $k$ is even,
  and totally definite if $k$ is odd \cite[Theorem $3.1$]{MR617867}.
  The ramification formula of $X_v$ is known
  by a series of papers pioneered by Ghate and his collaborators
  \cite{MR2770587}, \cite{MR3096563}, \cite{MR2038777} and \cite{MR2146605}
  for non-supercuspidal primes $p$
  (i.e., the local automorphic factor at $p$ is not of
  supercuspidal type).
  For $v \mid p$, let $G_p:=\Gal(\bar{\Q}_p|\Q_p)$
  and $G_v:=\Gal(\bar{F}_v|F_v)$ be the local Galois groups.
  \begin{definition}
  	We call a supercuspidal prime $p$ to be  dihedral for 
  	$f$ if  the local Galois representation
  	$\rho_f|_{G_p} \sim \Ind_{G_K}^{G_p}\chi$
  	for some quadratic extension $K|\Q_p$ and some character $\chi$
  	of $G_K:=\Gal(\bar{\Q}_p|K)$. 
  	Depending on $K|\Q_p$ is unramified
  	(or ramified), we call  the prime $p$ to be a unramified (or ramified)
  	supercuspidal prime for $f$.  
  	By the level of 
  	an unramified supercuspidal prime $p$, 
  	we mean the level of the corresponding 
  	local automorphic representation $\pi_p$.
  \end{definition}
  
  We say $f$ is $p$-minimal, if the $p$-part of its level 
  is the smallest among all twists
  $f \otimes \psi$ of $f$ by Dirichlet characters $\psi$.
  Write $N=p^{N_p}N'$ with $p \nmid N'$ and 
  the nebentypus $\epsilon = \epsilon_p \cdot \epsilon'$
  as a product of its $p$-part and prime to $p$-part with
  cond$(\epsilon_p)=p^{C_p}$. The supercuspidal primes $p$ for $f$
  can be characterized as follows:
  $C_p < N_p \geq 2$ with $a_p=0$. 
  If $p$ is a supercuspidal prime, then $a_p=0$ and 
  the corresponding slope is infinity and it is not possible to talk about 
  the parity of slopes.
  In \cite{MR3391026}, Bhattacharya-Ghate determined the Brauer classes of $X_v$ for
  odd supercuspidal primes (where the local Galois representation is always dihedral)
  with an extra hypothesis in the level $0$ case
  using a method involving local symbols and $(p,\ell)$-Galois representation. 
  They determined $X_v$ except in the following cases:
  \begin{enumerate}
  	\item 
  	$p=2$,
  	\item
  	$p$ is odd, $K=\Q_p(s)$ is unramified, the $p$-minimal twist of $f$
  	is of level $0$ and $\chi(s)^{p-1}=-1$.
  \end{enumerate}
  
  We wish to determine the algebra $X_v$ for \textbf{all} supercuspidal primes
  ({\it including} $p=2$ where the local Galois representation can be non-dihedral
  and the case $(2)$ above) in terms of the \emph{companion adjoint slope} $m_v$ 
  [cf. Definition~\ref{companion}] determined by the traces of adjoint lifts at the auxillary primes. 
 Note that the Theorem~\ref{nonexceptionalodd} and a part of Theorem~\ref{extrathm} 
  proved in this article are just a restatement of the results in \cite{MR3391026} for odd primes. However, our method is completely different as it uses local nature of Galois representations associated to modular forms. 
   
   In \cite{MR3391026}, the authors use a formula that determines the local Brauer classes of $X_v$
  as a product of finite number of local Hilbert symbols 
  \cite[Theorem $4.1$]{MR2146605}. In loc. cit., the authors simplify
  this product and give the ramification formula of $X_v$
  for odd supercuspidal primes.
  As the formula of local symbols for $v \mid 2$ (except $F=\Q$)
  is a bit technical \cite[Chapter $7$, p. $241$]{MR1218392},
  it is not possible to simplify the product in a similar way. 
  Our method circumvents the problem as we are using the local information 
  about modular forms determined by the corresponding filtered $(\phi,N)$-modules
  associated to $(p,p)$-Galois representation. Group cohomology plays a key role in our computation. 
  In Section~\ref{chi}, we write down the inertial Galois representations even for $p=2$. It will be of independent interest similar to that for $p$ odd.

  For supercuspidal primes (specially for $p=2$), the 
  image of the inertial groups under modular Galois representations are complicated and hence our results are also a bit theoretical in nature.  Our results are technical compare to Bhattacharya-Ghate because they used crucially  [cf. \cite[Lemma 4.2]{MR3391026} for instance] the order 
  of the Brauer class is co-prime to the residue degree of the local place. The ingenuity of our results stems from the fact that 
  we manage to identify the cases when the local Brauer classes are {\it not} determined by adjoint 
  slopes. Note that for most of the places of $\Q$, they are related to adjoint slopes. 
  In an ongoing project, we wish to find out the  local Brauer class
  of the endomorphism algebra of motive attached to a Hilbert modular form \cite[Appendix B]{MR2962318} using the computation done in the present paper. 
  
  \noindent
  \textbf{Notation.}
  Throughout the paper, $I_p$ and $I_v$ be the inertia 
  subgroups of $G_p$ and $G_v$ respectively.
  The wild and the tame part of the inertia subgroup $I_K$ 
  of $G_K$ will be denoted by $I_W(K)$ and $I_T(K)$ respectively.
 \subsection{Acknowledgement}
 The first named author was partially supported by the SERB grant YSS/2015/ \\
 001491. The authors were greatly benefitted from  several e-mail communications,
 encouragement of Professor Eknath Ghate. It is also a great pleasure 
 to acknowledge several help of Dr. Shalini Bhattacharya 
 during the preparation of this article.
	
 \section{The local Brauer Class of X and Its invariant}
 \label{Brauerclasscocycle}
 Let $
 \Ga= \{\ga \in \Aut(E): \,\,\exists \,\, \text{a Dirichlet character}\,\,
 \chi_\ga \,\, \text{such that} \,\, a_p^\ga=a_p \cdot \chi_\ga(p) \,\,
 \text{with} \,\, (p,N)=1 \}
 $
 be the group of extra twists for $f$. One knows that
 $F$ is the fixed field of $E$ by $\Ga$. 
 For $\gamma, \delta \in \Gamma$, the relation 
 $\chi_{\gamma \delta}=\chi_\gamma \chi_\delta^\gamma$ shows that
 $\gamma \mapsto \chi_\gamma(g)$ is a $1$-cocycle for a fixed $g \in G_\Q$. 
  Since $H^1(\Gamma, E^\times)$ is trivial, 
 there exists $\alpha(g) \in E^\times$ such that
 \begin{eqnarray}\label{1}
	 \alpha(g)^{\gamma -1} = \chi_\gamma (g),
 \end{eqnarray}
 for all $\gamma \in \Gamma$ by Hilbert's theorem $90$. 
 The element $\alpha(g)$ is well defined
 modulo $F^\times$. The map $\tilde{\alpha}: G_\Q \to E^\times / F^\times,
 \,\, g \mapsto \alpha(g) \,\, \text{mod} \,\, F^\times$ is a continuous 
 homomorphism. The map $\alpha: G_\Q \to E^\times$ can be thought of as a lift of 
 $\tilde{\alpha}$. 
 Let $\rho_f$ denote the $\lambda$-adic representation attached to $f$ 
 for some prime $\lambda \, | \, \ell$ of $E$. We list some properties of any lift $\alpha$ of the
 homomorphism $\tilde{\alpha}$.

\begin{prop} \label{prop1} 
 \cite[Lemma $1$]{MR2770587} \cite[Theorem $5.5$]{MR2058653}
	The map $\alpha$ satisfies the following properties:
	\begin{enumerate}
		\item 
			 $\alpha^2(g) \equiv \epsilon(g) \,\, \text{mod} 
			 \,\, F^\times$, for all $g \in G_\Q$.
		\item
			 $\alpha(g) \equiv \mathrm{Tr}(\rho_f(g)) \,\, \text{mod} \,\, F^\times$,
			 for all $g \in G_\Q$, provided that the trace is non-zero.
		\item
			 $\alpha(\Frob_p) \equiv a_p \,\, \text{mod} \,\, F^\times$, 
			 for all prime $p \nmid N$ with $a_p \neq 0$.
	\end{enumerate}
\end{prop}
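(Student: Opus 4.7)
The plan is to exploit the fact that $F = E^{\Gamma}$: to verify a congruence $x \equiv y \pmod{F^{\times}}$ it suffices to check that $(x/y)^{\gamma - 1} = 1$ for every $\gamma \in \Gamma$, and the defining identity \eqref{1} reduces each such check to an identity between Dirichlet characters. Throughout I would think of the nebentypus $\epsilon$ and the twist characters $\chi_{\gamma}$ as characters of $G_{\Q}$ via global class field theory.

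\textbf{Step 1 (property (1)).} First I would apply $\gamma$ to the Hecke relation $a_{p}^{2} = a_{p^{2}} + p^{k-1}\epsilon(p)$ for a prime $p \nmid N$. Using multiplicativity of $\chi_{\gamma}$ and the relations $a_{p}^{\gamma} = a_{p}\,\chi_{\gamma}(p)$ and $a_{p^{2}}^{\gamma} = a_{p^{2}}\,\chi_{\gamma}(p)^{2}$, a short computation gives
\[
\chi_{\gamma}(p)^{2}\,\epsilon(p) \;=\; \epsilon^{\gamma}(p)
\]
for all such $p$. By Chebotarev this promotes to $\chi_{\gamma}^{2}\,\epsilon = \epsilon^{\gamma}$ as characters of $G_{\Q}$, and hence $\epsilon(g)^{\gamma - 1} = \chi_{\gamma}(g)^{2}$. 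Squaring \eqref{1} yields $(\alpha(g)^{2})^{\gamma-1} = \chi_{\gamma}(g)^{2}$, so $(\alpha(g)^{2}/\epsilon(g))^{\gamma - 1} = 1$ for every $\gamma \in \Gamma$, proving $\alpha(g)^{2} \equiv \epsilon(g) \pmod{F^{\times}}$.

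\textbf{Step 2 (property (2)).} The key input is that, for $\gamma \in \Gamma$, the conjugate representation $\rho_{f}^{\gamma}$ is isomorphic to the twist $\rho_{f} \otimes \chi_{\gamma}$. This is a standard Chebotarev argument: on $\Frob_{p}$ with $p \nmid N\ell$ both sides have trace $a_{p}\,\chi_{\gamma}(p) = a_{p}^{\gamma}$, so the two semisimple $\lambda$-adic representations coincide. Taking traces at a general $g \in G_{\Q}$ gives $\mathrm{Tr}(\rho_{f}(g))^{\gamma} = \chi_{\gamma}(g)\,\mathrm{Tr}(\rho_{f}(g))$. When the trace is nonzero we may divide, obtaining $\mathrm{Tr}(\rho_{f}(g))^{\gamma - 1} = \chi_{\gamma}(g) = \alpha(g)^{\gamma - 1}$, so $\alpha(g)/\mathrm{Tr}(\rho_{f}(g)) \in F^{\times}$.

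\textbf{Step 3 (property (3)).} Specializing (2) at $g = \Frob_{p}$ for $p \nmid N$ and invoking Deligne's identity $\mathrm{Tr}(\rho_{f}(\Frob_{p})) = a_{p}$ immediately yields $\alpha(\Frob_{p}) \equiv a_{p} \pmod{F^{\times}}$ whenever $a_{p} \neq 0$.

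The only place where something substantive beyond bookkeeping happens is the isomorphism $\rho_{f}^{\gamma} \simeq \rho_{f} \otimes \chi_{\gamma}$ in Step 2; that is really the main point, and it is a direct consequence of the defining property of $\Gamma$ via Chebotarev together with semisimplicity of $\rho_{f}$. The remaining work is the character identity $\chi_{\gamma}^{2}\epsilon = \epsilon^{\gamma}$ in Step 1, which is an elementary manipulation of the Hecke relation.
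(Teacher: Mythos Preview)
Your proof is correct. The paper itself does not prove this proposition: it is stated with citations to \cite{MR2770587} and \cite{MR2058653} and no proof is given. That said, the surrounding text in the paper does invoke exactly the two ingredients you use---the identity $\chi_\gamma^2=\epsilon^{\gamma-1}$ is recorded immediately after the statement, and the isomorphism $\rho_{f^\gamma}\sim\rho_f\otimes\chi_\gamma$ (via Brauer--Nesbitt) is spelled out a few lines later when extending property~(2) to the $p$-adic setting---so your argument is precisely the standard one the cited references contain.
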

    Comparing (\ref{1}) and the property ($1$) of the above proposition, we have
    the following identity:
    $
    \chi_\ga^2=\epsilon^{\ga -1}
    $,	
    for all $\ga \in \Ga$.
 	According to \cite{MR633903}, the Brauer class of $X$ in 
	$\Br(F)={\HH^2}(G_F, \bar{F}^\times)$ is given 
	by the $F^\times$-valued $2$-cocycle:
	$
	  c_\alpha(g,h)=\frac{\alpha(g) \alpha(h)}{\alpha(gh)}
	  \,\, \forall \,\, g,h \in G_F,
	$
	for any continuous lift $\alpha$ of $\tilde{\alpha}$ and this class is 
	independent of the lift choosen \cite[Section $3.2$]{MR3096563}. The restriction 
	$[c_\alpha|_{G_v}] \in \text{Br}(F_v) ={\HH}^2(G_v, \bar{F_v}^\times)$
	gives the local Brauer class of $X_v$ for any prime $v$ of $F$. 
	Since $\mathrm{inv}_v(c_\alpha|_{G_v}) \in \frac{1}{2}
	\Z/\Z$, the invariant map $\mathrm{inv}_v$ at $v$ 
	completely determines the class $[X_v]$ in $\text{Br}(F_v)$.
	For the definition of the invariant map,
	we refer to \cite[Ch. XIII, Section $3$, p. $193$]{MR554237}. 
	Let $S: G_v \to \bar{F_v}^\times$ be any set map. 
	Recall the following lemma \cite[Lemma $9$]{MR3096563} useful to
	determine  the Brauer class of any local $2$-cocycle  of the form:
	$
	  c_S(g,h)=\frac{S(g)S(h)}{S(gh)};
	$ for all $g,h \in G_v$.
	
	\begin{lemma} \label{lem1}
		Let $S:G_v \to \bar{F_v}^\times$ be any map and 
	    $t: G_v \to \bar{F_v}^\times$ be an unramified
		homomorphism with
		\begin{enumerate}
			\item 
			$S(i) \in F_v^\times$, for all $i \in I_v$,
			\item
			$S(g)^2/t(g) \in F_v^\times$, for all $g \in G_v$.
		\end{enumerate}
		For any arithmetic Frobenius $\Frob_v$, we then have 
		$
		  \mathrm{inv}_v(c_S)= \frac{1}{2} 
		  v \Big(\frac{S^2}{t}(\Frob_v) \Big)
		  \,\, \text{mod} \,\, \Z \in \frac{1}{2}\Z/\Z.
		$
		Here, $v: F_v^\times \to \Z$ is the surjective valuation.
	\end{lemma}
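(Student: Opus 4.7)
My plan is to exploit hypotheses (1) and (2) to reduce $c_S$ to an unramified $\Z$-valued cocycle on $G_v/I_v$, then apply the standard description of $\mathrm{inv}_v$ via the short exact sequence $0 \to \Z \to \Q \to \Q/\Z \to 0$ of trivial $\hat{\Z}$-modules.

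First I would show that $[c_S]$ is $2$-torsion. Set $\tilde S(g) := S(g)^2/t(g)$; hypothesis (2) gives $\tilde S \colon G_v \to F_v^\times$. Since the coboundary of a product of cochains is the product of their coboundaries, and since $c_t \equiv 1$ (as $t$ is a homomorphism),
\[
c_S(g,h)^2 \;=\; c_{S^2}(g,h) \;=\; c_t(g,h)\cdot c_{\tilde S}(g,h) \;=\; (\delta \tilde S)(g,h).
\]
Thus $c_S^2 = \delta \tilde S$ is an $F_v^\times$-valued coboundary; in particular $2[c_S] = 0$ in $\Br(F_v) \cong \Q/\Z$, so $\mathrm{inv}_v[c_S] \in \tfrac12\Z/\Z$.

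Next I would use hypothesis (1) to produce an unramified representative. The restriction $c_S|_{I_v \times I_v} = \delta(S|_{I_v})$ is an $F_v^\times$-valued coboundary on $I_v$, hence trivial in $\HH^2(I_v, \bar{F_v}^\times)$. Combined with the identification $\Br(F_v) \cong \HH^2(G_v/I_v, F_v^{ur,\times})$ from the Hochschild--Serre spectral sequence, $[c_S]$ is represented by a cocycle $c'$ inflated from $\hat{\Z} = G_v/I_v$ and valued in $F_v^{ur,\times}$. Applying the surjective valuation $v \colon F_v^{ur,\times} \twoheadrightarrow \Z$ turns $c'$ into a $\Z$-valued $2$-cocycle on $\hat{\Z}$.

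Finally, the invariant map $\HH^2(\hat{\Z}, F_v^{ur,\times}) \to \HH^2(\hat{\Z}, \Z) \cong \Q/\Z$ is concretely: given a $\Z$-valued 2-cocycle $c''$ on $\hat{\Z}$, choose a $\Q$-valued 1-cochain $f$ with $\delta f = c''$ (possible since $\HH^2(\hat\Z, \Q) = 0$), and output $f(\Frob_v) \bmod \Z$. Applied here, Step 1 gives $2(v \circ c_S) = \delta(v \circ \tilde S)$ as $\Z$-valued cochains, and the same relation persists for the cohomologous unramified $c'$ modulo an $F_v^\times$-coboundary (whose image under $\tfrac12 v$ lies in $\Z$, so vanishes mod $\Z$), so $f := \tfrac12(v \circ \tilde S)$ is a valid lift, giving
\[
\mathrm{inv}_v[c_S] \;=\; f(\Frob_v) \bmod \Z \;=\; \tfrac12\, v\bigl(\tilde S(\Frob_v)\bigr) \;=\; \tfrac12\, v\bigl(S^2/t(\Frob_v)\bigr) \bmod \Z.
\]
The principal obstacle is the Hochschild--Serre descent in Step 2: explicitly producing the unramified cocycle $c'$ cohomologous to $c_S$, and verifying that the trivialization $\tilde S$ of $c_S^2$ transforms coherently under this modification (so that its valuation at $\Frob_v$ is unchanged mod $2$). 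Once this is in hand, the invariant computation is purely formal via the Bockstein description above.
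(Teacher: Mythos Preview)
The paper does not prove this lemma: it is quoted from \cite[Lemma~$9$]{MR3096563} with the preface ``Recall the following lemma,'' and no argument is supplied here. There is therefore no in-paper proof to compare your proposal against.

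Assessed on its own, your outline is the standard route and is essentially correct. Step~1 is clean: since $t$ is a homomorphism, $c_S^{\,2}=c_{S^2/t}=\delta\tilde S$ with $\tilde S:=S^2/t$ valued in $F_v^\times$, so $[c_S]$ is $2$-torsion. Step~3 correctly identifies the invariant via the connecting map for $0\to\Z\to\Q\to\Q/\Z\to0$ on $\hat\Z$-cohomology. The point you single out in Step~2 is the only genuine work: one must replace $c_S$ by an unramified representative and check that the trivializing cochain $\tilde S$ for its square is altered only by an element of $F_v^{\times 2}$, so that $v(\tilde S(\Frob_v))\bmod 2$ is preserved. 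A concrete way to close this is to fix a set-theoretic splitting $G_v\cong I_v\rtimes\overline{\langle\Frob_v\rangle}$, write each $g$ as $\Frob_v^{\,n(g)} i_g$ with $i_g\in I_v$, and divide $S$ by the $F_v^\times$-valued cochain $g\mapsto S(i_g)$ supplied by hypothesis~(1); the new $S$ is trivial on $I_v$, $c_S$ changes by an $F_v^\times$-coboundary, and $\tilde S(\Frob_v)$ changes by a square in $F_v^\times$. With that adjustment your Step~3 goes through verbatim.
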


  For $\gamma \in \Gamma$, there is a unique Dirichlet character $\chi_\gamma$
  such that $f^\gamma \equiv f \otimes \chi_\gamma.$
  By restricting to the corresponding decomposition group $G_p$, we deduce that the $p$-adic Galois representations are similar. In other words, $\rho_{f^\gamma,p} \sim \rho_{f,p} \otimes \chi_\gamma$.
  Using Brauer-Nesbitt theorem, we have $\rho_{f^\ga,p} \sim \rho_{f,p}^\ga$.
  Hence, the property ($2$) of Proposition~\ref{prop1} is also true even for 
  $p$-adic Galois representation by comparing the traces of the similar $p$-adic Galois 
  representations associated to $f^\gamma$ and $f \otimes \chi_\gamma$.

  Since $c_\alpha|_{G_v}$ determines the algebra $X_v$, the most obvious choice
  for $S$ in the above lemma would be $\alpha$.
  The main difference of the computation of $X_v$ for $p$ odd and $p=2$
  is as follows:
  for odd $p$, we will see that $S=\alpha$ except $KF_v|F_v$ is ramified quadratic
  or $p$ is a bad prime [cf. Definition~\ref{goodbad}]. When $S \neq \alpha$,
  we have to divide $\alpha$ by a suitable auxiliary function (i.e., $S=\alpha/f$)
  to make the above lemma applicable. For $p=2$, we will always be needed to divide
  $\alpha$ by (one or more) auxiliary functions unless $N_2=2$ and their
  corresponding cocycles will contribute in the ramification of $X_v$
  as error terms.
  
\section{Statement of results}
\label{Statement} 
 If the Brauer class of $X_v$ is determined by the parity of an integer
 $a$, we write $X_v \sim a$ or $X_v \sim (-1)^a$.
 Choose a prime $p'$ coprime to $N$, with non-zero 
 Fourier coefficients $a_{p'}$, satisfying the following properties:
 \begin{equation} \label{cong1}
 p' \equiv 1 \pmod {p^{N_p}}, \quad p' \equiv p \pmod {N'}.
 \end{equation}
 \begin{definition}\label{companion}
 	Let $v$ be a valuation on $F$ such that $v(p)=1$, 
 	We define the ``\emph{companion  adjoint slope}" at a place $v$ of $F$  lying above  a supercuspidal prime $p$ to be the $v$-adic valuation of the trace  of adjoint lift at $p'$. In other words, 
 	$
 	m_v:=[F_v:\Q_p]  \cdot v(a_{p'}^2\epsilon(p')^{-1})
 	$
 	denote the ``companion adjoint slope" at $v$.
 \end{definition}
 We choose the following auxiliary primes with non-zero Fourier coefficients:
 \begin{itemize}
 	\item
 	$p^{''} \equiv 1  \pmod{N'}$ and $p^{''}$ has order $(p-1)$ in 
 	$(\Z/p^{N_p}\Z)^\times$, 
 	\item 
 	$p^{'''} \equiv 1 \pmod{N'}$ and $p^{'''}$ has order $2$ in
 	$(\Z/2^{N_2}\Z)^\times$, 
 	\item
 	for all $\gamma \in \Gamma$,
 	\begin{eqnarray*}
 		\chi_\gamma(p^\dagger) =
 		\begin{cases}
 			-1, & \quad \text{if} \,\, 
 			\chi_\gamma \,\, \text{is ramified},\\
 			1, & \quad \text{if} \,\, \chi_\gamma \,\, 
 			\text{is unramified}. \\
 		\end{cases}
 	\end{eqnarray*}
 \end{itemize}
 There exist infinitely many such primes since $f$ is assumed to be non-CM. 
 For an odd supercuspidal prime $p$, the local Galois representation is
 always dihedral, i.e., 
 $
 \rho_f|_{G_p} \sim \Ind_{G_K}^{G_p} \chi,
 $
 with $K|\Q_p$ quadratic.
 According to \cite{MR2471916},  we have
 $\chi|_{I_p} = \omega_2^l \cdot \chi_1 \cdot \chi_2$ (when $K$ is unramified)
 and $\chi|_{I_K} = \omega^l \cdot \chi_1 \cdot \chi_2$ (when $K$ is ramified)
 [cf. Section~\ref{chi}].
 Here, $\omega_2$ is the fundamental character of level $2$, 
 $\omega$ is the Teichm\"{u}ller character and $\chi_m$ is the character
 having some $p$-power order for $m=1,2$.
 
 \begin{definition} \label{goodbad}
 	We call an odd unramified supercuspidal prime $p$ of level zero  
 	to be ``good" if 
 	\begin{center}
 	    \textbf{(H)} $l$ is not an odd multiple of $(p+1)/2$.
    \end{center}
 	If such a prime $p$ is not 
 	``good", we call it a ``bad" level zero unramified supercuspidal prime. 
 \end{definition} 
 In Lemma~\ref{lem12}, we prove that level zero 
 unramified supercuspdial primes $p \equiv 1 \pmod{4}$ with $C_p=0$
 and $p \equiv 3 \pmod{4}$ with $C_p=1$ are always ``good". 
 For $a,b \in F_v^\times$,
 we write $a=\pi_v^{v(a)} \cdot a'$ and $b=\pi_v^{v(b)} \cdot b'$,
 where $\pi_v$ is a uniformizer in $F_v$. The corresponding local symbol 
 \begin{equation} \label{localsymbol}
 (a,b)_v=(-1)^{v(a)v(b) \frac{Nv-1}{2}} \cdot \big(\frac{b'}{v}\big)^{v(a)}
 \cdot \big(\frac{a'}{v}\big)^{v(b)}.
 \end{equation}
 Here $\big(\frac{.}{v}\big)$ is the local quadratic residue symbol
 in the residue field at $v$.
 
 We now fix a uniformizer $\pi$ in $K$ and let $g_\pi \in G_K$ be an element which is
 mapped to $\pi \in K^\times$ under the reciprocity map.
 Note that $g_\pi$ is a Frobenius element in $G_K$.
 Assume that 
 \[
   \alpha(g_\pi) \equiv b \,\, \text{mod} \,\, F_v^\times. 
 \] 
 Observe that the element $b$ depends on the choice of the Frobenius.
 Any other Frobenius in $G_K$ has the form $g_\pi i$ for some $i \in I_K$, and 
 $\alpha(g_\pi i) \equiv \alpha(g_\pi) \alpha(i) \equiv
 b \cdot \alpha(i) \pmod {F_v^\times}$.
 The dependence of the result on the choice of uniformizer is not surprising as \cite[Theorem~$6.1$]{MR3391026} also depends on a fixed uniformizer 
 $s$ of the quadratic unramified extension $K=\Q_p(s)$ of $\Q_p$.

Consider the field $F_v^{'}=F_v(b)$ and  let us now define the following error terms:
 \begin{equation}\label{c}
 (-1)^{n_v} = 
 \begin{cases}
 (\pi^2,a_{p^{''}}^2)_v, \quad \text{if $p$ is an odd ramified
 	supercuspidal prime  with $KF_v|F_v$ ramified quadratic}, \\
 (t,c)_v, \quad \text{if $p$ is an odd unramified  ``bad" supercuspidal  prime}, \\
 \end{cases}
 \end{equation}
 where $\pi$ is a uniformizer in $K$,
 $c \in F_v^\times$ is given by $\alpha(i) \equiv \sqrt{c}$ mod $F_v^\times \,\,
 \forall \,\, i \in I_T(F_v)$ with $\alpha(i) \notin F_v^\times$ 
 [cf. Equ. \ref{definitionofc}] and
 $t \in F_v^\times$ is the quantity given by the quadratic extension $F_v(\sqrt{t})|F_v$
 cut out by the quadratic character $\psi$ defined as follows:
 $\psi(g)=1, \,\, \text{if} \,\, \alpha(g) \in (F_v^{'})^\times \,\, \text{and} \,\,
 \psi(g)=-1, \,\, \text{if} \,\, \alpha(g) \notin (F_v^{'})^\times$.

 Note that $KF_v=F_v$ if and only if $K \subseteq F_v$.
 When $K \nsubseteq F_v$, the extension $KF_v|F_v$ turns out to be an unramified quadratic extension in the following cases \cite{MR3391026}:
 $p$ is an odd unramified supercuspidal prime or
 $p \equiv 1 \pmod{4}$ is a ramified supercuspidal prime or
 $p \equiv 3 \pmod{4}$ ramified supercuspidal prime with
 the ramification index $e(F_v|\Q_p)=e_v$ even.
 In the remaining case, that is, when $p \equiv 3 \pmod{4}$ is a
 ramified supercuspidal prime with $e_v$ odd,
 the extension $KF_v|F_v$ becomes ramified quadratic.

 \begin{theorem}
 	\label{nonexceptionalodd}
 	Let  $v$ be a place of $F$
 	lying above an odd supercuspidal prime $p$ for $f$ satisfying one of the following properties:
 	\begin{enumerate}
 		\item
 		$p$ is an unramified supercuspidal prime 
 		of positive level 
 		or it is  a ``good" level zero 
 		unramified supercuspidal prime,
 		\item
 		$p$ is a ramified supercuspidal prime with $K \subseteq F_v$ or $KF_v|F_v$ unramified quadratic extension.
 		
 	\end{enumerate}
 	The local endomorphism algebra $X_v$ is a matrix algebra if and only if $m_v$ is even.
 \end{theorem}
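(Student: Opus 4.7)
The plan is to apply Lemma~\ref{lem1} directly to $S = \alpha|_{G_v}$, verifying its two hypotheses and then extracting the invariant explicitly in terms of the auxiliary prime $p'$.

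First, I would verify that $\alpha(i) \in F_v^\times$ for every $i \in I_v$. By Proposition~\ref{prop1}(2), whenever $\tr(\rho_f(i)) \neq 0$ one has $\alpha(i) \equiv \tr(\rho_f(i)) \bmod F^\times$. For an odd supercuspidal prime $\rho_f|_{G_p} \cong \Ind_{G_K}^{G_p}\chi$ with $K|\Q_p$ quadratic, so the trace of the induced representation is supported on $G_K \cap I_p = I_K$. Plugging in the explicit description $\chi|_{I_K} = \omega_2^l \chi_1 \chi_2$ (unramified $K$) or $\omega^l \chi_1 \chi_2$ (ramified $K$) from Section~\ref{chi}, one checks that in cases (1) and (2) of the theorem these trace values already sit in $F_v^\times$. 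The level-zero unramified subcase is where the hypothesis \textbf{(H)} on $l$ enters; its failure produces the ``bad'' subcase, handled separately via the error term $(-1)^{n_v}$ of~(\ref{c}).

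Second, I would exhibit an unramified homomorphism $t: G_v \to \bar F_v^\times$ with $\alpha^2/t \in F_v^\times$. Since $\alpha^2 \equiv \epsilon \bmod F^\times$ by Proposition~\ref{prop1}(1), one can decompose $\epsilon|_{G_v}$ via local class field theory as a product of an unramified character (which serves as $t$) and a ramified part, and then verify that the $F^\times$-valued function $\alpha^2/\epsilon$ times this ramified part stays inside $F_v^\times$, using the same inertial analysis from the first step.

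With both hypotheses of Lemma~\ref{lem1} in hand, the invariant reads
\[
\mathrm{inv}_v(c_\alpha) = \tfrac{1}{2} v\bigl((\alpha^2/t)(\Frob_v)\bigr) \bmod \Z.
\]
To match this with $m_v/2$, I would use that the congruences $p' \equiv 1 \pmod{p^{N_p}}$ and $p' \equiv p \pmod{N'}$ are designed so that, under global reciprocity, the class of $\Frob_{p'}$ differs from an appropriate power of $\Frob_v$ only by inertia and by elements on which $\alpha^2/t$ is a unit; combined with $\alpha(\Frob_{p'}) \equiv a_{p'} \bmod F^\times$ from Proposition~\ref{prop1}(3) and $\epsilon(\Frob_{p'}) = \epsilon(p')$, this forces
\[
v\bigl((\alpha^2/t)(\Frob_v)\bigr) \equiv [F_v:\Q_p] \cdot v\bigl(a_{p'}^2 \epsilon(p')^{-1}\bigr) = m_v \pmod 2,
\]
giving the stated equivalence. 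The main obstacle is the first step: pinning down the field of rationality of $\tr(\rho_f(i))$ for $i \in I_v$ requires juggling the ramification behaviour of $K$, the position of $F_v$ inside $\bar\Q_p$, and the precise value of $l$. Cases (1) and (2) are exactly those in which this analysis yields a clean $F_v^\times$-valued conclusion; the excluded configurations are precisely those in which $\alpha$ must be pre-modified by an auxiliary twist, producing the error cocycles of~(\ref{c}).
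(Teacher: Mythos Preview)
Your overall strategy matches the paper's, but there is a genuine gap in the first step. You assert that in case~(1) the traces $\tr(\rho_f(i))$ for $i\in I_v$ already lie in $F_v^\times$, and that the hypothesis \textbf{(H)} only intervenes at level zero. This is not so: \textbf{(H)} governs the \emph{tame} part of $\chi$, and it can fail at positive level as well. When it does, for $i\in I_T(K)$ with $\bar i=\delta$ one has $\tr(\rho_f(i))=\omega_2^l(\delta)+\omega_2^{lp}(\delta)=0$, so Proposition~\ref{prop1}(2) gives no information and in fact $\alpha(i)\notin F_v^\times$ (cf.\ Lemma~\ref{lem10}). Thus $S=\alpha$ fails hypothesis~(1) of Lemma~\ref{lem1} in exactly this subcase of the theorem.

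The paper repairs this by dividing $\alpha$ by the auxiliary function $f$ of~(\ref{definef}) (or its inflation $F$ when $K\nsubseteq F_v$), so that $S=\alpha/f$ does satisfy Lemma~\ref{lem1}. The nontrivial point, which your outline does not address, is that the correction cocycle $c_f$ has \emph{trivial} Brauer class: this is Lemma~\ref{cocycleclassKF_v}, whose proof identifies the quadratic extension cut out by the associated character $\psi$ as $F_v(\sqrt{p^*})$ and then computes the resulting local symbol explicitly. Only because $[c_f]=0$ does one recover $\mathrm{inv}_v(c_\alpha)=\mathrm{inv}_v(c_S)$ and hence the clean formula $X_v\sim m_v$. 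A minor further omission: the paper first twists so that $\epsilon$ is tame at $p$ (legitimate since $X_f$ is twist-invariant), which is what makes Lemma~\ref{lem4} apply to the wild part and lets one take $t=\epsilon'$ in Lemma~\ref{lem1}; your sketch of condition~(2) implicitly needs this reduction as well.
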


 We wish to emphasis that the above result 
 is exactly the same as \cite{MR3391026}.
 For a ``good" level zero 
 unramified supercuspidal prime,
 the hypothesis \textbf{(H)} here is exactly the same as the condition 
 of \cite[Theorem $6.1$]{MR3391026}
 [cf. Lemma~\ref{lem11}].
 Observe that the hypothesis of \cite[Theorem $6.1$]{MR3391026}
 is not required for level zero unramified supercuspidal primes
 $p \equiv 1 \pmod{4}$ with $C_p=0$
 and $p \equiv 3 \pmod{4}$ with $C_p=1$ [cf. Lemma~\ref{lem12}]. 
 In the case of odd unramified supercuspidal primes
 for $f$ of level zero without the hypothesis,  we predict the ramifications of endomorphism algebras using the following theorem:
 \begin{theorem}  
 	\label{extrathm}
 	Let $v \mid p$ be a place of $F$ with 
 	$p$  a ``bad" level zero unramified supercuspidal prime 
 	or
 	$KF_v|F_v$ is a ramified quadratic extension. 
 	The 
 	ramification of 
 	$X_v$  is determined by the parity of $m_v+n_v$. 
 \end{theorem}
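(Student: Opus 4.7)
The plan is to apply Lemma~\ref{lem1} not to $\alpha$ directly but to a corrected set map $S=\alpha/f_v$ for an auxiliary function $f_v:G_v\to\bar F_v^\times$. In the regime of Theorem~\ref{extrathm} at least one of the hypotheses of Lemma~\ref{lem1} fails for $\alpha$ itself. In the ``bad'' level zero unramified case, the failure occurs on tame inertia and is quantified by $\alpha(i)^2\equiv c\pmod{F_v^\times}$ for the element $c\in F_v^\times$ of \eqref{c}; in the ramified quadratic case $KF_v|F_v$, it is the Frobenius lift $g_\pi$ of a uniformizer $\pi$ of $K$ that forces $\alpha(g_\pi)\equiv b$ to lie outside $F_v^\times$, so that $F_v'=F_v(b)$ is quadratic. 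In either case the deviation is captured by the quadratic character $\psi$ of the excerpt, cutting out the quadratic extension $F_v(\sqrt t)|F_v$.

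First I would use the explicit inertial description of $\chi|_{I_p}$ or $\chi|_{I_K}$ from Section~\ref{chi}, together with Proposition~\ref{prop1}(1), to write $\alpha|_{I_v}$ modulo $F_v^\times$ completely explicitly, pinning down precisely where $\alpha$ leaves $F_v^\times$. I would then build $f_v$ as a lift of $\psi$ taking values in $F_v(\sqrt t)^\times$, exploiting the auxiliary primes $p''$, $p'''$, $p^\dagger$, whose congruences of Section~\ref{Statement} were tailored to isolate the wild, prime-to-$p$ and extra-twist pieces of $\alpha$. The construction should be arranged so that $S(i)\in F_v^\times$ for every $i\in I_v$ and $S(g)^2/t(g)\in F_v^\times$ for every $g\in G_v$, for some unramified homomorphism $t$, so that Lemma~\ref{lem1} becomes applicable to $S$.

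Having factored $c_\alpha=c_S\cdot c_{f_v}$, the invariant splits as $\mathrm{inv}_v(c_\alpha|_{G_v})=\mathrm{inv}_v(c_S)+\mathrm{inv}_v(c_{f_v})$ in $\tfrac12\Z/\Z$. Lemma~\ref{lem1} yields the first summand: using Proposition~\ref{prop1}(2) applied at the auxiliary prime $p'$ (whose congruences force $\chi_\gamma(p')=1$ for all $\gamma\in\Gamma$, so that $\alpha(\Frob_{p'})\equiv a_{p'}$), one obtains $S(\Frob_v)^2/t(\Frob_v)\equiv a_{p'}^2\epsilon(p')^{-1}$ modulo a unit, whence $\mathrm{inv}_v(c_S)=\tfrac12 m_v\bmod\Z$. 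The second cocycle $c_{f_v}$ takes values in $F_v(\sqrt t)$ and so its Brauer class is a local Hilbert symbol over $F_v$; this symbol collapses to $(\pi^2,a_{p''}^2)_v$ in the ramified quadratic case (where the obstruction lives on Frobenius paired against $a_{p''}$) and to $(t,c)_v$ in the ``bad'' level zero case (where the obstruction is the cup of the characters cutting out $F_v(\sqrt t)$ and $F_v(\sqrt c)$). Adding the two contributions yields the parity of $m_v+n_v$.

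The chief obstacle will be constructing $f_v$ globally on $G_v$ (not merely on $I_v$) in such a way that the factorization $c_\alpha=c_S\cdot c_{f_v}$ identifies $c_{f_v}$ precisely with the Hilbert symbol of \eqref{c}, independently of the various choices of Frobenius lift $g_\pi$, uniformizer $\pi$ of $K$, and square-root branch. I expect this requires an inflation--restriction argument along $G_v\to\Gal(F_v(\sqrt t)|F_v)$ and careful use of the auxiliary congruences on $p''$ and $p'''$ to kill extraneous wild and prime-to-$p$ contributions. The fact that Theorem~\ref{extrathm} only concerns the parity of $m_v+n_v$ gives some slack, but verifying that this parity is genuinely well-defined under all the choices is the delicate consistency check.
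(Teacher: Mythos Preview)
Your overall architecture---factor $c_\alpha=c_S\cdot c_{f_v}$, apply Lemma~\ref{lem1} to $S$, and identify $c_{f_v}$ with the Hilbert symbol defining $n_v$---is exactly the paper's approach, and your treatment of the ``bad'' level zero case is essentially correct: the obstruction lives on tame inertia, $f_v$ is the function taking the value $1$ or $\sqrt c$ according to $\psi$, and $c_{f_v}$ gives $(t,c)_v$.

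However, your diagnosis of the ramified quadratic case is wrong, and this would derail the construction of $f_v$. The obstruction there is \emph{not} that $\alpha(g_\pi)\equiv b$ falls outside $F_v^\times$; indeed $g_\pi$ is a Frobenius in $G_K$ and need not even lie in $G_v$, and in any event condition~(1) of Lemma~\ref{lem1} concerns only inertia. The actual failure is that $I_v\supsetneq I_{KF_v}$ (since $KF_v|F_v$ is ramified), and for $i\in I_v\setminus I_{KF_v}$ one has $\alpha(i)\notin F_v^\times$; the paper shows $\alpha([-1]_v)\equiv a_{p''}\pmod{F_v^\times}$ via the reciprocity map and the congruence defining $p''$. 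Accordingly the correct auxiliary function is $f(g)=1$ for $g\in G_{KF_v}$ and $f(g)=a_{p''}$ for $g\in G_v\setminus G_{KF_v}$---governed by the quadratic character of $G_v/G_{KF_v}$, \emph{not} by $\psi$---and the cocycle $c_f$ then visibly inflates from $\Gal(KF_v|F_v)$ and yields $(\pi^2,a_{p''}^2)_v$. Your claim that ``in either case the deviation is captured by $\psi$'' is therefore incorrect for this branch, and building $f_v$ as a lift of $\psi$ would not give the right symbol. Finally, the primes $p'''$ and $p^\dagger$ are only used for $p=2$; for Theorem~\ref{extrathm} (odd $p$) only $p'$ and $p''$ enter.
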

 \begin{remark}
 The result obtained in the present article  for $KF_v|F_v$ ramified quadratic extension  
 is  same as that of \cite{MR3391026} by Lemma~\ref{equiv}.
 \end{remark}
 We now consider the case $p=2$.  Let $\omega_2$ be the fundamental character of level $2$, $\omega$
 is a trivial character and $\chi_m$ is the character on a cyclic group
 of some $2$-power order generated by $\ga_m$ for $m=1,2$.
 In the dihedral supercuspidal case,
 we show that the inertia type can be written as follows [cf. Section~\ref{chi}]:
 $\chi|_{I_2} = \omega_2^l \cdot \chi_1 \cdot \chi_2$ (when $K$ is unramified)
 and $\chi|_{I_K} = \omega \cdot \chi_1 \cdot \chi_2$ (when $K$ is ramified). 
 Assume that $\chi_1$ takes $\gamma_1$ to $\zeta_{2^r}$ 
 and $\chi_2$ takes $\gamma_2$ to $\zeta_{2^s}$.

To define the error terms,  let $b$ be as above and consider two fields 
 $F_v^{'}=F_v(b, \zeta_{2^s}+\zeta_{2^s}^{-1})$
 and $F_v^{''}=F_v(b, \zeta_{2^r})$. 
  We now define two characters $\psi_1, \psi_2$ on $G_v$ as follows:
 \begin{gather*} 
 \psi_1(g)=
 \left\{\begin{array}{lll}
 1 & \quad \text{if} \,\, \alpha(g) \in (F_v^{'})^\times \\              
 -1 & \quad \text{if} \,\, \alpha(g) \notin (F_v^{'})^\times
 \end{array}\right.
 \quad \text{and}  \quad	\psi_2(g)=
 \left\{\begin{array}{lll}
 1 & \quad 
 \text{if} \,\, \alpha(g) \in (F_v^{''})^\times \\             
 -1 & \quad 
 \text{if} \,\, \alpha(g) \notin (F_v^{''})^\times.
 \end{array}\right.
 \end{gather*}

Denote by $F_v(\sqrt{t_1}), F_v(\sqrt{t_2})$, the quadratic extensions of $F_v$ cut out by the characters $\psi_1$ and  $\psi_2$. 
 Define an integer $n_v$ modulo $2$ as follows:
 \begin{equation*}
 (-1)^{n_v} =
 \begin{cases}
 (t_1, \zeta_{2^{r-1}})_v \cdot  (t_2,(\zeta_{2^s} + 
 \zeta_{2^s}^{-1})^2)_v, \quad  \text{if $p=2$ and  $s \neq 2 $,} \\
 (t_2, a_{p^\dagger}^2)_v, \quad  \text{if $p=2$ and $s=2$.} \\
 \end{cases}
 \end{equation*}
 Consider an element
 $d_0 \in F_v^\times$  given by
 $\alpha(i) \equiv \sqrt{d_0}$ mod $F_v^\times \,\, \forall \,\, 
 i \in I_T(F_v) \setminus I_T(KF_v)$. An easy check using Lemma~\ref{lem8} shows 
 that $d_0$ is well-defined. 
We also define two integers $n_v^{'}, n_v^{''}$
 mod $2$ by
 $
 (-1)^{n_v^{'}}= (t_1, \zeta_{2^{r-1}})_v \cdot  
 (t_2,(\zeta_{2^s}+\zeta_{2^s}^{-1})^2)_v \cdot
 (\pi^2, d_0)_v
 $
 and
 $(-1)^{n_v^{''}}= (t_2, a_{p^\dagger}^2)_v \cdot (\pi^2, d_0)_v$, 
  Note that these  error terms can be explicitly computed 
 from the information about a given modular form following \cite{MR1218392}.

 We now state our main theorem for {\it dihedral} supercuspidal prime $p=2$. 
 \begin{theorem}
 	\label{nonexceptional2}
 	Let $p=2$ be a dihedral  supercuspidal prime for $f$ and 
 	$v$ be a place of $F$ lying above  prime $p$. The ramification of $X_v$ is determined by the parity of $m_v+r_v$. 
 	\begin{enumerate}
 		\item
 		If $K \subseteq F_v$ or  $KF_v|F_v$ is an unramified quadratic extension, 
 		then the error term is $r_v=n_v$.	
 		\item
 		Assume $KF_v|F_v$ is a ramified quadratic extension.
 		\begin{itemize}
 			\item 
 			If $\zeta_{2^s}+\zeta_{2^s}^{-1} \neq 0$, 
 			then the error term 
 			$r_v=n_v^{'}$. 
 			\item 
 			For $\zeta_{2^s}+\zeta_{2^s}^{-1}=0$
 			the error term  is given by
 			$r_v=n_v^{''}$.
 		\end{itemize}
 	\end{enumerate}
 \end{theorem}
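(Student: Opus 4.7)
The plan is to apply Lemma~\ref{lem1} to the restriction $c_\alpha|_{G_v}$ after modifying $\alpha$ so that hypothesis (1) of that lemma is satisfied. Since $\rho_f|_{G_2}\sim\Ind_{G_K}^{G_2}\chi$ with the inertial types described in Section~\ref{chi}, Proposition~\ref{prop1}(2) gives $\alpha(i)\equiv\mathrm{Tr}(\rho_f(i))\pmod{F_v^\times}$ on inertia elements with nonzero trace; for $p=2$ this trace naturally lives in an extension of $F_v$ generated by $\zeta_{2^r}$ and $\zeta_{2^s}+\zeta_{2^s}^{-1}$, coming from $\chi_1,\chi_2$. Thus $\alpha$ itself does not land in $F_v^\times$ on $I_v$, and one has to divide it by auxiliary quadratic characters. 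The natural candidates are lifts of $\psi_1$ and $\psi_2$ (and, in the ramified quadratic case, an additional character accounting for $\alpha(g_\pi)$), chosen so that the modified map $S$ has $S(i)\in F_v^\times$ for all $i\in I_v$; the cocycle $c_\alpha$ then decomposes as $c_S$ multiplied by auxiliary $2$-cocycles built from these quadratic characters.

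In the first case, where $K\subseteq F_v$ or $KF_v|F_v$ is an unramified quadratic extension, the wild inertia trace of $\rho_f$ cuts out at worst the two quadratic extensions $F_v(\sqrt{t_1})$ and $F_v(\sqrt{t_2})$. The auxiliary cocycles are then genuinely quadratic, so their Brauer invariants are computed by standard local Hilbert symbols, producing exactly $(t_1,\zeta_{2^{r-1}})_v\cdot(t_2,(\zeta_{2^s}+\zeta_{2^s}^{-1})^2)_v$, or, when $\zeta_{2^s}+\zeta_{2^s}^{-1}=0$ forces $s=2$ and makes this term degenerate, the substitute $(t_2,a_{p^\dagger}^2)_v$ using the auxiliary prime $p^\dagger$ selected after Definition~\ref{companion}. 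These are precisely $(-1)^{n_v}$. For the main cocycle $c_S$, I choose the unramified homomorphism $t$ matching the Frobenius behaviour of $S^2$, and compute $v(S^2/t)$ at the auxiliary Frobenius $\Frob_{p'}$ specified by \eqref{cong1}; using Proposition~\ref{prop1}(3) and $\alpha^2\equiv\epsilon\pmod{F^\times}$, this valuation collapses to $[F_v:\Q_p]\cdot v(a_{p'}^2\epsilon(p')^{-1})=m_v$. Summing gives $\mathrm{inv}_v(c_\alpha)=\tfrac12(m_v+n_v)\pmod\Z$.

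In the ramified quadratic case a further obstruction appears: tame inertia elements of $F_v$ that do not lie in $I_T(KF_v)$ have $\alpha$-value only determined modulo $F_v^\times$ by $\sqrt{d_0}$, with well-definedness provided by Lemma~\ref{lem8}. This forces an additional auxiliary factor whose Brauer contribution is the Hilbert symbol $(\pi^2,d_0)_v$, and this is exactly the term distinguishing $n_v'$ from $n_v$ (and $n_v''$ from $(t_2,a_{p^\dagger}^2)_v$). Evaluating the remaining piece of $c_S$ at $\Frob_{p'}$ proceeds as in the unramified quadratic case and again yields $m_v$, so the total invariant is $\tfrac12(m_v+n_v')$ when $\zeta_{2^s}+\zeta_{2^s}^{-1}\ne 0$, and $\tfrac12(m_v+n_v'')$ when $\zeta_{2^s}+\zeta_{2^s}^{-1}=0$.

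The main difficulty is the bookkeeping at $p=2$: the Hilbert symbol formulas away from $\Q_2$ are technically heavier than for odd $p$ (cf.\ \cite[Ch.~7]{MR1218392}), the image of inertia under $\rho_f|_{G_2}$ need not be abelian, and one has to simultaneously track the wild contributions from $\chi_1,\chi_2$ and the tame contribution from $\omega$ or $\omega_2$. The degenerate subcase $\zeta_{2^s}+\zeta_{2^s}^{-1}=0$ (equivalently $s=2$) is the most delicate point, because the auxiliary character built from $\chi_2$ collapses; replacing it by the character forced by the prescribed values $\chi_\gamma(p^\dagger)=\pm 1$, and then checking cocycle-by-cocycle that this substitution produces the same local Brauer class, is where I expect the bulk of the technical work, together with verifying that $d_0$ is independent of the chosen representative through a careful application of Lemma~\ref{lem8}.
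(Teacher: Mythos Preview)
Your proposal is correct and follows essentially the same route as the paper: divide $\alpha$ by the auxiliary functions $f_1,f_2$ (or their inflations $F_1,F_2$) built from $\psi_1,\psi_2$, and in the ramified quadratic case by the further function taking value $\sqrt{d_0}$ off $G_{KF_v}$; then apply Lemma~\ref{lem1} with $t=\epsilon'$ to the quotient $S$, compute $\mathrm{inv}_v(c_S)$ at $\Frob_v$ via the auxiliary prime $p'$ to get $m_v$, and read off the error terms as Hilbert symbols from the $2\times 2$ cocycle tables of the auxiliary pieces. One small inaccuracy: the extra auxiliary character in the ramified quadratic case does not ``account for $\alpha(g_\pi)$'' but rather for the coset $G_v\setminus G_{KF_v}$, exactly as you correctly describe later when you invoke $d_0$ on $I_T(F_v)\setminus I_T(KF_v)$.
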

 
 In case ($2$) of the above theorem
 with $F=\Q$, we will prove that the quantity $d_0$ 
 in the error term is equal to $a_{p'''}^2$
 except $K=\Q_2(\sqrt{d})$ with $d=2,-6$.
 The following corollary determines the situation of the above 
 theorem when the local algebra $X_v$ is determined by the
 parity of $m_v$ itself. 
 \begin{cor}
 	\label{maincoro}
 	Let $p=2$ be a dihedral supercuspidal prime for $f$ with $N_2=2$.
 	The ramification of the local Brauer class
 	of $X_v$ is determined by the parity of $m_v$, for any $v \mid 2$.
 \end{cor}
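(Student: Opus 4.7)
The plan is to invoke Theorem~\ref{nonexceptional2} and show that under the hypothesis $N_2=2$, the error term $r_v$ is always even, so that the local Brauer class is determined by the parity of $m_v$ alone. Theorem~\ref{nonexceptional2} splits according to whether $KF_v|F_v$ is unramified (case (1), $r_v=n_v$) or ramified quadratic (case (2), $r_v=n_v'$ or $n_v''$). I would first show that only case (1) can occur, and then that $n_v\equiv 0\pmod 2$.

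To rule out case (2), I would show that $K/\Q_2$ itself is unramified. The Artin conductor formula for an induced representation reads
\[
N_2 \;=\; v_{\Q_2}(\mathfrak{d}_{K/\Q_2}) + f(K/\Q_2)\cdot f_K(\chi),
\]
where $f(K/\Q_2)$ is the residue degree. For $K/\Q_2$ ramified quadratic, the discriminant valuation $d(K/\Q_2)$ is $2$ or $3$; the option $d=3$ forces $N_2\geq 3$, while $d=2$ with $N_2=2$ forces $f_K(\chi)=0$, i.e., $\chi$ unramified on $G_K$. But $K/\Q_2$ being totally ramified means one may take the non-trivial $\sigma\in\Gal(K/\Q_2)$ inside $I_{\Q_2}$, and conjugation by such $\sigma$ acts trivially on $G_K/I_K\cong\widehat{\Z}$; hence $\chi^\sigma=\chi$ and $\Ind_{G_K}^{G_{\Q_2}}\chi$ decomposes, contradicting the supercuspidality of $\rho_f|_{G_2}$. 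Therefore $K/\Q_2$ is unramified, so $KF_v|F_v$ is either trivial or unramified quadratic, placing us in case (1) of Theorem~\ref{nonexceptional2}.

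With $K/\Q_2$ unramified the displayed formula simplifies to $N_2=2 f_K(\chi)$, giving $f_K(\chi)=1$. Hence $\chi|_{I_K}$ factors through the tame quotient $\sO_K^\times/(1+2\sO_K)\cong\F_4^\times$, cyclic of the \emph{odd} order $3$. Consequently the $2$-power components $\chi_1,\chi_2$ appearing in the inertia decomposition of Section~\ref{chi} must both be trivial, so $r=s=0$ and in particular $s\neq 2$. Substituting into the definition of $n_v$,
\[
(-1)^{n_v}=(t_1,\zeta_{2^{r-1}})_v\cdot(t_2,(\zeta_{2^s}+\zeta_{2^s}^{-1})^2)_v=(t_1,1)_v\cdot(t_2,4)_v=1,
\]
since $4=2^2$ is a square in $F_v^\times$ and the first factor is either $(t_1,1)_v$ or vacuous ($\chi_1$ being trivial). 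Therefore $r_v=n_v\equiv 0\pmod 2$, and by Theorem~\ref{nonexceptional2} the ramification of $X_v$ is governed by the parity of $m_v$, proving the corollary.

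The main obstacle is the first step: ruling out $K/\Q_2$ ramified. This depends on the observation that an unramified character of $G_K$ is automatically fixed by the non-trivial Galois conjugation of $K/\Q_2$ when that extension is totally ramified, so no irreducible induced representation of conductor $\leq 2$ can arise. Once this is settled, the vanishing of both local symbols defining $n_v$ is forced by the parity-of-orders argument that kills $\chi_1,\chi_2$, together with the fact that $2^2$ is a square.
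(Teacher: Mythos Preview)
Your proof is correct, but it takes a slightly more roundabout path than the paper's. The paper does not pass through Theorem~\ref{nonexceptional2} at all: once $K/\Q_2$ is seen to be unramified with $a(\chi)=1$, Lemma~\ref{l1} gives $\alpha(j)\in F^\times$ for every $j$ in the wild inertia of $K$, and Lemma~\ref{lem8} handles the tame part, so $\alpha(\iota)\in F_v^\times$ for all $\iota\in I_v\subseteq I_K$; one then applies Lemma~\ref{lem1} directly with $S=\alpha$ and $t=\epsilon'$, with no auxiliary functions and no error term to kill. Your route --- invoking Theorem~\ref{nonexceptional2} and then checking that the Hilbert symbols defining $n_v$ collapse because $\chi_1,\chi_2$ are trivial --- reaches the same conclusion via the machinery built for the general case. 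Both arguments rest on the same observation (conductor $1$ forces the $2$-power characters $\chi_1,\chi_2$ to vanish), so the difference is organisational rather than mathematical. One small bonus of your write-up: your argument that an unramified $\chi$ on $G_K$ satisfies $\chi^\sigma=\chi$ when $K/\Q_2$ is totally ramified is self-contained, whereas the paper disposes of that case by reducing to a $p$-minimal twist and citing \cite[\S 41.4]{MR2234120}.
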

 Let $\rho_2(f)$ be the local representation of the Weil-Deligne group of $\Q_2$ 
 associated to $f$ at the prime $p=2$.
 When inertia acts irreducibly, the projective image of $\rho_2(f)$
 is isomorphic to one of three ``exceptional" groups $A_4,S_4,A_5$.
 For any $v \mid 2$,
 let $D_{K'}$ be the discriminant of the field $K'$ cut out by the kernel of the 
 homomorphism $d:G_v \to F_v^\times/F_v^{\times 2}$ with
 $d=\frac{\alpha^2}{D}$ and $D=\det(\rho_2(f))$.
 In this case, we prove:
 
 \begin{theorem} \label{exceptional}
 	Let $p=2$ be a non-dihedral supercuspidal prime for a modular form $f$ and $v \mid 2$. 
 	The class of $X_v$ in $Br(F_v)$ is given by the symbol
 	$
 	D(-1)^{[F_v:\Q_2]} \cdot (2,D_{K'})_v.
 	$
 \end{theorem}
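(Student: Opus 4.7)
The plan is to apply Lemma~\ref{lem1} to the choice $S=\alpha$ restricted to $G_v$, and then to unwind the resulting invariant into a product of local symbols of the shape appearing in \eqref{localsymbol}. The hypothesis that $p=2$ is a non-dihedral supercuspidal prime forces inertia to act irreducibly on $\rho_f|_{G_v}$ with projective image one of the exceptional groups $A_4,S_4,A_5$, and this rigidity is what makes a clean closed form possible and rules out the complicated auxiliary cocycles that showed up in the dihedral analysis.

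First I would verify the two hypotheses of Lemma~\ref{lem1}. For condition~(1) one needs $\alpha(i)\in F_v^\times$ for every $i\in I_v$. When $\mathrm{Tr}\,\rho_f(i)\neq 0$, Proposition~\ref{prop1}(2) gives $\alpha(i)\equiv\mathrm{Tr}\,\rho_f(i)\pmod{F^\times}$, and in each exceptional case these traces are sums of two roots of unity of order dividing $12$, $24$ or $60$, whose class modulo $F^\times$ already sits in $F_v^\times$; when the trace vanishes one falls back on Proposition~\ref{prop1}(1) applied to $i^2$. For condition~(2), introduce the character $d=\alpha^2/D$ of $G_v$; by Proposition~\ref{prop1}(1) and the identity $D=\epsilon\cdot\chi_{\mathrm{cyc}}^{k-1}$, the map $d$ is $F_v^\times/F_v^{\times 2}$-valued, so $\alpha^2=D\cdot d$ with $d$ a quadratic character cutting out the extension $K'/F_v$. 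Taking $t$ to be the unramified representative of $D$ modulo squares then produces an unramified homomorphism with $\alpha^2/t\in F_v^\times$, as required.

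With the hypotheses of Lemma~\ref{lem1} in place, the invariant $\mathrm{inv}_v(c_\alpha)$ is computed as $\tfrac12 v\bigl(\alpha(\Frob_v)^2/t(\Frob_v)\bigr)\bmod\Z$. I would rewrite the argument as $D(\Frob_v)\cdot d(\Frob_v)$ and translate its parity into a product of Hilbert symbols via \eqref{localsymbol}: the factor $(2,D_{K'})_v$ captures the ramified part through the discriminant $D_{K'}$ of the quadratic field cut out by $d$, while the factor involving $D$ and $(-1)^{[F_v:\Q_2]}$ comes from the sign that appears in the dyadic symbol formula \cite[Ch.~7]{MR1218392} applied to the valuation of $D$ at $\Frob_v$, together with $(-1,-1)_v=(-1)^{[F_v:\Q_2]}$.

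The main obstacle is ensuring $\alpha(i)\in F_v^\times$ for wildly ramified $i$. Because $p=2$ the wild inertia is nontrivial and, under the three exceptional groups, $\rho_f$ can contain elements of order $4$ (for $A_4,S_4$) or of order $4,5$ (for $A_5$) whose character values live in small cyclotomic fields beyond $F_v$; verifying that their classes in $E^\times/F^\times$ nevertheless fall into $F_v^\times$ requires a case-by-case inspection of the three exceptional groups and of the faithful two-dimensional projective lift attached to each. This is exactly the place where the non-dihedral hypothesis is used crucially, since in the dihedral setting the analogous statement fails and forces the auxiliary divisions behind the error terms $n_v,n_v',n_v''$ of Theorem~\ref{nonexceptional2}.
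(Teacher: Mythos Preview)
Your approach differs substantially from the paper's and contains a genuine gap. The paper does \emph{not} apply Lemma~\ref{lem1} here; instead it splits the cocycle multiplicatively as $[c_\alpha]=[c_D]\cdot[c_d]$ with $D=\det(\rho_2(f))$ and $d=\alpha^2/D$, and computes each factor by hand. The factor $[c_D]_v$ is handled by Lemma~\ref{D-1}, which gives $[c_D]_v=1$ iff $D(-1)=1$, producing the term $D(-1)^{[F_v:\Q_2]}$. For $[c_d]_v$ (Proposition~\ref{DK}) the paper first observes that $G_2$ is solvable and that Weil ruled out $A_4$ over $\Q$, so the projective image is exactly $S_4$; the inclusion $\ker(\tilde\rho_2)\subset\ker(d)$ then forces $G_v/\ker(d)$ to be a $2$-group quotient of $S_4$, hence trivial or $\Z/2\Z$. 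Computing $\mathrm{Tr}(\rho_2(g))^2/\det(\rho_2(g))\in\{4,0,1,2\}$ for elements of projective order $1,2,3,4$ shows that $d$ takes only the values $1$ and $2$ in $F_v^\times/F_v^{\times2}$, and that any $4$-cycle gives $d(g)\equiv 2$; hence $m=1$ and $[c_d]_v=(2,D_{K'})_v$ directly from the two-by-two cocycle table.

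The gap in your route is precisely the ``main obstacle'' you flag but do not resolve: condition~(1) of Lemma~\ref{lem1}, namely $\alpha(i)\in F_v^\times$ for all $i\in I_v$, generally fails here. Since inertia acts irreducibly, it contains elements of projective order $4$; for such $i$ one has $d(i)\equiv 2\pmod{F_v^{\times2}}$, so $\alpha(i)^2\equiv 2\,D(i)\pmod{F_v^{\times2}}$ and $\alpha(i)$ lies in a genuine quadratic extension of $F_v$ rather than in $F_v^\times$. Thus Lemma~\ref{lem1} cannot be applied with $S=\alpha$ without first dividing off an auxiliary function, and once you do that you are essentially reproducing the $[c_D]\cdot[c_d]$ decomposition anyway. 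Two smaller points: the symbol formula~\eqref{localsymbol} is only for odd residue characteristic (the exponent $(Nv-1)/2$ is not an integer when $v\mid 2$), so your final translation step is not available as written; and your case split over $A_4,S_4,A_5$ is unnecessary, since only $S_4$ actually occurs.
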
 
 If $k$ is odd, we can predict ramification in terms of nebentypus $\epsilon$. 
 More precisely, we have:  
 \begin{cor} \label{Depsilon}
 	If $p=2$ is a non-dihedral supercuspidal prime for a modular form $f$
 	of odd weight and $v \mid 2$, then we have
 	$
 	[X_v] \sim \epsilon(-1)^{[F_v:\Q_2]} \cdot (2,D_{K'})_v.
 	$
 	 \end{cor}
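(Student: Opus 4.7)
The plan is to deduce Corollary~\ref{Depsilon} directly from Theorem~\ref{exceptional} by identifying the scalar $D(-1)$ in the odd-weight case. Since the Hilbert-symbol factor $(2,D_{K'})_v$ is unaffected, the task reduces to proving that $D(-1)=\epsilon(-1)$ when $k$ is odd; Theorem~\ref{exceptional} will then give $[X_v]\sim\epsilon(-1)^{[F_v:\Q_2]}\cdot(2,D_{K'})_v$ at once.

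The input I would use is the classical fact that the Deligne representation $\rho_f$ satisfies $\det(\rho_f)=\epsilon\cdot\chi_\ell^{\,k-1}$ globally, where $\chi_\ell$ is the $\ell$-adic cyclotomic character (for any auxiliary $\ell\neq 2$). Restricting to $G_{\Q_2}$ and transporting via local class field theory, the abelian character $D=\det(\rho_2(f))$ on $\Q_2^\times$ factors as $\epsilon|_{\Q_2^\times}\cdot\psi^{k-1}$, where $\psi$ is the character of $\Q_2^\times$ corresponding to $\chi_\ell|_{G_{\Q_2}}$. Evaluating at $-1\in\Q_2^\times$, the first factor contributes $\epsilon(-1)$ and the second contributes $\psi(-1)^{k-1}=(-1)^{k-1}$, since any element mapping to $-1\in\Q_2^\times$ under reciprocity acts on $\ell$-power roots of unity by inversion (a fact that is independent of the geometric-versus-arithmetic Frobenius convention because $-1$ is self-inverse). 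With $k$ odd the sign $(-1)^{k-1}$ is trivial, hence $D(-1)=\epsilon(-1)$.

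Substituting this identity into the formula supplied by Theorem~\ref{exceptional} gives the stated expression for $[X_v]$. There is essentially no obstacle here beyond the determinant computation itself; the only subtlety is invoking local-global compatibility for $\det(\rho_f)$ in the correct form, which is standard and requires no non-dihedral hypothesis. Thus Corollary~\ref{Depsilon} follows as a formal consequence of Theorem~\ref{exceptional} once the weight parity is used to suppress the cyclotomic contribution.
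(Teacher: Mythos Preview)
Your overall strategy---deduce the corollary from Theorem~\ref{exceptional} by computing $D(-1)$ via the determinant formula $\det(\rho_f)=\epsilon\,\chi_\ell^{\,k-1}$---is exactly the paper's approach. However, your execution contains a genuine error. You take an auxiliary $\ell\neq 2$ and assert that the element of $G_{\Q_2}$ corresponding to $-1\in\Q_2^\times$ under reciprocity ``acts on $\ell$-power roots of unity by inversion,'' hence $\psi(-1)=-1$. This is false: for $\ell\neq 2$ the tower $\Q_2(\mu_{\ell^\infty})/\Q_2$ is \emph{unramified}, and since $-1\in\Z_2^\times$ the reciprocity map sends it into the inertia subgroup, which acts trivially on $\ell$-power roots of unity. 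Thus $\psi(-1)=1$, not $-1$, and the weight parity plays no role whatsoever in your computation as written.

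The paper circumvents this by invoking Saito's local--global compatibility at $\ell=p=2$ (the isomorphism~$(*)$ of Section~\ref{galrep}) and working with the $2$-adic cyclotomic character $\chi_2$. On $G_{\Q_2}$ the character $\chi_2$ is genuinely ramified, and when $k$ is odd the exponent $k-1$ is even, so $\chi_2^{\,k-1}$ is the square of a character; hence the cocycle $c_{\chi_2^{k-1}}$ is a coboundary and $[c_D]=[c_\epsilon]$, after which Lemma~\ref{D-1} applies. Equivalently, $\chi_2([-1]_2)=-1$ because inertia \emph{does} act nontrivially on $2$-power roots of unity, so your intended identity $\psi(-1)=-1$ holds only for $\ell=2$. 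The repair is simply to switch to $\ell=2$ and cite Saito, exactly as the paper does; with $\ell\neq 2$ the cyclotomic factor is unramified at $2$ and cannot detect the weight parity.
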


 \section{Galois representation associated to modular forms and local global compatibility}
\label{galrep}
 For all rational prime $\ell$, we consider a prime $\lambda \mid \ell$ of $E$ 
 and let $E_{\lambda}$ be the completion of $E$ at $\lambda$. 
 For a modular form $f$ as above, Eichler-Shimura-Deligne constructed  a 
 Galois representation  $\rho_{f}=\rho_{f, \lambda}: G_{\Q} \rightarrow \GL_2(E_{\lambda})$. 
 In this paper, we will be using information about  the local Galois representation $\rho_{f, \lambda}|_{G_p}$ with $\ell=p$ called $(p,p)$ Galois representation
 \cite{MR2860430}.

 Let $\A_\Q$ denotes 
 the adeles of $\Q$ and  $\pi$ be the automorphic representation of the adele group 
 $\GL_2(\A_\Q)$ associated to $f$. This has a decomposition as a
 restricted tensor product $\pi=\bigotimes'_p \pi_p$ over all places $p$ (including the infinite primes). 
 Each local component $\pi_p$ is an irreducible admissible representation
 of $\GL_2(\Q_p)$. By the local Langlands correspondence for $n=2$,
 these representations $\pi_p$ are in a bijection with (isomorphism classes of)
 complex $2$-dimensional Frobenius-semisimple Weil-Deligne representations. 
 
 The local global compatibility between these two  Galois representations 
 was proved by Carayol in \cite{MR870690} if $\ell \neq p$. 
 In this paper, we will be using the local global compatibility even for $\ell=p$ 
 proved by Saito \cite{MR1465337} which we describe now. 
 For $p \neq \ell$ (not necessarily $p \nmid N$), the restriction
 $\rho_{f,p}:=\rho_f|_{G_p}$ induces a representation 
 $ {}'\rho_{f,p}: {}'W_p \rightarrow \GL_2(\bar{\Q}_\ell)$
 of the Weil-Deligne group ${}'W_p$ of $\Q_p$.
 Let ${}'\rho_{f,p}^{ss}$ denote its Frobenius semisimplification 
 and let the isomorphism class of Frobenius
 semisimple representation of ${}'W_p$ associated to $\pi_p$ 
 be denoted by ${}'\rho(\pi_p)$.
 The representation ${}'\rho(\pi_p)$ of the Weil-Deligne group of $\Q_p$
 is a pair $(\rho_p(f),N)$ with a representation $\rho_p(f): W_p \to \GL_2(\C)$
 and a nilpotent endomorphism $N$ of $\C^2$ \cite[Section $3$]{MR0347738}.
 In this setting, we have the following diagram: 
 \begin{center}
 	\begin{tikzpicture} [node distance = 2.2cm, auto]
 	\node(A) {$\pi_p$};
 	\node(B) [above of=A] {$\pi_f=\bigotimes'_p \pi_p$};
 	\node(C) [right of=B, node distance = 6cm] 
 	{$\rho(\pi)=\rho_f:G_\Q \to GL_2(\bar{\Q}_\ell)$};
 	\node(E) [below of=C, node distance=1.1cm] 
 	{${}'\rho_{f,p}^{ss}:{}'W_p \to GL_2(\bar{\Q}_\ell)$};
 	\node(D) [below of=E, node distance=1.1cm] 
 	{${}'\rho(\pi_p)=\rho_p:W_p \to GL_2(\C)$};
 	
 	\draw[->] (B) to node {} (A);
 	\draw[<->] (C) to node [swap] {Global Langlands} (B);
 	\draw[->] (C) to node {restriction to $G_p$ induces} (E);
 	\draw[<->] (E) to node {$(*)$} (D);
 	\draw[<->] (A) to node {Local Langlands } (D);
 	\end{tikzpicture}
 \end{center}
 The relation $(*)$ is an isomorphism and it is known for $\ell \neq p$
 by the work of Deligne-Langlands-Carayol. Saito proved the isomorphism $(*)$ even for $\ell =p$.
 For $\ell = p$, the representation $'\rho_{f,p}^{ss}$ is de-Rham and by the 
 landmark paper \cite{MR1779803} 
 can be studied using filtered $(\phi,N)$ modules.

 Recall that for a dihedral supercuspidal prime $p=2$ for $f$,  
 the local Galois representation 
 $
 \rho_f|_{G_2} \sim \Ind_{G_K}^{G_2} \chi,
 $
 with $K|\Q_2$ quadratic.
 In this case, if $N_p=2$, then we show that the extension $K|\Q_2$ is always
 unramified.
 For the character $\chi$ of $G_K$, the usual conductor
 $a(\chi)=\text{min} \{n: \chi(U_K^n)= 1\}$.
 Let $v_2$ be the normalized valuation of $\Q_2^\times$ and 
 $\delta(K|\Q_2),f(K|\Q_2)$ denote the discriminant and the residual degree 
 for $K|\Q_2$ respectively. 
 We now recall the formula \cite[Proposition $4(b)$, p. $158$]{CasselFrohlich} 
 which coincides with the formula for the Artin conductor of a $2$-dimensional
 induced representation of a local Galois group:
 $
 a(\mathrm{Ind}_{G_K}^{G_2} \chi)=v_2(\delta(K|\Q_2)) +f(K|\Q_2)a(\chi).
 $
 This gives
 \begin{eqnarray} 
 N_2 = 
 \begin{cases}
 2 a(\chi), & \quad \text{if} \,\, K|\Q_2 \,\,
 \text{is unramified}, \\
 2+a(\chi), & \quad \text{if} \,\, K|\Q_2 \,\,
 \text{is ramified with discriminant valuation} \,\, 2, \\
 3+a(\chi), & \quad \text{if} \,\, K|\Q_2 \,\,
 \text{is ramified with discriminant valuation} \,\, 3.
 \end{cases}
 \end{eqnarray}
 If $K|\Q_2$ is unramified, $N_2$ always becomes even.
 We see that $N_2=2$ happens in the following cases:
 \begin{enumerate}
 	\item 
 	$K|\Q_2$ is unramified with $a(\chi)=1$ and
 	\item
 	$K|\Q_2$ is ramified with discriminant valuation $2$ and $a(\chi)=0$.
 \end{enumerate}
 The second case cannot occur. Since the algebra $X_f$ is invariant with respect to
 twisting by a Dirichlet character \cite[Proposition $3$]{MR633903},
 without loss of generality one can take $f$ to be minimal 
 in the sense that its level is the smallest
 among all twists $f \otimes \psi$ of $f$ by Dirichlet characters $\psi$.
 Then by \cite[\S $41.4$ Lemma]{MR2234120}, we have $a(\chi) \geq d=2$,
 a contradiction to $a(\chi)=0$ in the second case.

 \begin{lemma} \label{l1}
 	Let $p=2$ be an unramified dihedral supercuspidal prime for $f$ with $N_2=2$.
 	For all $j \in I_W(K)$, we have  $\alpha(j) \in F^\times$.
 \end{lemma}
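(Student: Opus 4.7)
The plan is to compute $\mathrm{Tr}(\rho_f(j))$ for every $j \in I_W(K)$ and then invoke Proposition~\ref{prop1}(2). Since the trace will turn out to be $2 \in F^\times$, this immediately gives $\alpha(j) \in F^\times$ as required.

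First I would use the conductor formula $N_2 = 2\, a(\chi)$ recalled just above the lemma in the unramified case: together with the hypothesis $N_2 = 2$ it forces $a(\chi) = 1$. By definition of the Artin conductor this means $\chi|_{U_K^1} = 1$, and by local class field theory a character of $W_K^{\mathrm{ab}}$ is at most tamely ramified—equivalently, trivial on the wild inertia $I_W(K)$—if and only if its conductor is at most $1$. Hence $\chi|_{I_W(K)} = 1$.

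Next I would run the standard induced-character computation. Since $K|\Q_2$ is unramified, $I_K = I_2$, so $I_W(K) = I_W(\Q_2)$ is the maximal pro-$2$ subgroup of $I_2$; being characteristic in $I_2$, it is normal in $G_2$. Fix any $\sigma \in G_2 \setminus G_K$; for $j \in I_W(K) \subset G_K$ the induced-character formula yields
\[
\mathrm{Tr}(\rho_f(j)) = \chi(j) + \chi(\sigma^{-1} j \sigma) = 1 + 1 = 2,
\]
where both arguments lie in $I_W(K)$ by normality and are killed by $\chi$ thanks to the previous step. Applying Proposition~\ref{prop1}(2), whose $(p,p)$-adic version is justified in the paragraph following that proposition, gives $\alpha(j) \equiv 2 \pmod{F^\times}$, hence $\alpha(j) \in F^\times$. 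The only substantive step is the class-field-theoretic translation $a(\chi) = 1 \Rightarrow \chi|_{I_W(K)} = 1$; once that is in place, the trace computation is immediate, and the non-vanishing hypothesis in Proposition~\ref{prop1}(2) is automatic since the trace equals $2 \neq 0$.
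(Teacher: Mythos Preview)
Your proof is correct and follows essentially the same route as the paper: both deduce $a(\chi)=1$ from $N_2=2$, translate this via local class field theory into $\chi|_{I_W(K)}=1$, compute $\mathrm{Tr}(\rho_f(j))=\chi(j)+\chi^\sigma(j)=2$, and then apply Proposition~\ref{prop1}(2). Your explicit justification that $\sigma^{-1}j\sigma\in I_W(K)$ via normality (since $I_W(K)=I_W(\Q_2)$ is characteristic in $I_2$ when $K|\Q_2$ is unramified) spells out a point the paper leaves implicit.
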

 
 \begin{proof}
 	Since $N_2=2$, we have $a(\chi)=1$, that is, $\chi|_{U_K^1}=1$. 
 	We know that reciprocity map sends wild inertia group of $K$
 	onto the principal unit group of $K$.
 	Let $\tau=\tau_k \in I_W(K)$ be an element which is mapped to
 	$k \in U_K^1 \subset K^\times$ under the reciprocity map. 
 	Hence, using property $(2)$ of Proposition~\ref{prop1}
 	we obtain $\alpha(\tau) \equiv \chi(k)+\chi^\sigma(k) \equiv 1$
 	mod $F^\times$.
 \end{proof}

 We can realize the nebentypus $\epsilon$ as an idelic character as follows:
 for $x \in \Q_p^\times$, let $[x]$ denote the corresponding element
 $(1,\cdots,x,\cdots,1)$ in $\mathbb{A}_\Q^\times$. The restriction
 of $\epsilon$ to $\Q_p^\times$ is then given by the formula:
 \begin{eqnarray} \label{idelic}
 \epsilon([p^mu])=\epsilon'(p)^m \epsilon_p(u)^{-1},
 \end{eqnarray}
 for $m \in \Z$ and $u \in \Z_p^\times$. From class field theory, we know that
 norm residue map sends $\Q_p^\times \subseteq \mathbb{A}_\Q^\times$ onto a
 dense subset of the decomposition group $G_p$ at $p$. The Galois character
 $\epsilon|_{G_p}$ can also be determined by this fact using the formula
 above.
 \section{Inertial Galois representations} 
 \label{chi}
  In this section, we assume the familiarity of the reader with  \cite{MR2471916}. 
  \subsection{Odd supercuspidal primes}
  For an odd supercuspidal prime, the local Galois representation
  $
  \rho_f|_{G_p} \sim \Ind_{G_K}^{G_p} \chi,
  $
  with $K|\Q_p$ quadratic. Assume $\sigma$ is the generator of $\Gal(K|\Q_p)$.
 Choose a finite extension $L|\Q_p$ with the property that 
  $\rho_f$ is crystalline over $L$ (cf. \cite{MR2471916} for more details) and $L|\Q_p$ is Galois.
  
 Let $\omega_2$ is the fundamental character of level $2$, 
  $\omega$ is the Teichm\"{u}ller character and $\chi_m$ is the character
  having some $p$-power order for $m=1,2$. The inertia type of $\chi$ can be written as follows:
  $
  \chi|_{I_p} = \chi|_{I(L|\Q_p)}=\omega_2^l \cdot \chi_1 \cdot \chi_2$
  (when $K$ is unramified) \cite[Sections~$3.3.2$]{MR2471916}
  and $
  \chi|_{I_K}=\chi|_{I(L|K)}=\omega^l \cdot \chi_1 \cdot \chi_2$
  \cite[Sections~$3.4.2$]{MR2471916}.
  The action of $\sigma$ on these characters are given by the following rule:
  $\omega_2^\sigma=\omega_2^p, \omega^\sigma=\omega,
  \chi_1^\sigma=\chi_1$ and $\chi_2^\sigma=\chi_2^{-1}$.
  
  Since $\chi$ does not extend to $G_p$, we have $\chi \neq \chi^\sigma$
  on $G_K$ which is equivalent to that $\chi \neq \chi^\sigma$ on $I_K$.
  The last condition is equivalent to: 
  either $l \not \equiv 0 \pmod{p+1}$
  or $\chi_2^\sigma \neq \chi_2^{-1}$ (unramified case)
  and $\chi_2^\sigma \neq \chi_2^{-1}$ (ramified case).

 \subsection{Dihedral supercuspidal prime $p=2$}
 In this case, we will see that $\chi|_{I_2}$ can be thought of 
 as a character of an inertia subgroup of a finite Galois extension 
 of $\Q_2$. By a computation similar to  \cite[Sections~$3.3.2$, Sections~$3.4.2$]{MR2471916} for 
 odd primes $p$,  we show that $\chi$ restricted to inertia group 
 can be written as $\chi|_{I_2}=\omega_2^l \cdot \chi_1 \cdot \chi_2$ 
 (in the unramified case) and $\chi|_{I_K}=\omega \cdot \chi_1 \cdot \chi_2$ 
 (in the ramified case). The  results in this section are obtained by generalizing the construction 
 of $\chi$ on the inertia group for $p=2$ following \cite{MR2471916}.
 
	Let $W_2$ (respectively $W_K$) be the Weil group of $\Q_2$ (respectively $K$) and $\rho_2(f)$ be the local representation associated to the local representation $\pi_2$ [cf. Section~\ref{galrep}].
	In this case, the inertia group acts reducibly.
	If it acts irreducibly, then the image of $\rho_2(f)$ becomes an exceptional group. 
	Here, we only concentrate on the dihedral supercuspidal representations.
	To write down the inertia type $\chi|_{I_2}$ or $\chi|_{I_K}$, we recall the structure of the local Galois representation 
	$\rho_2(f)$ following \cite{MR2471916}.
	\subsection{The case $K$ unramified}
	\label{Kunramifiedp2}
	 In this case, $\chi$ is a character
	of $W_K$ which does not extend to $W_2$ and it is finite on $I_2$.  
	Let $\Gal(K|\Q_2)$ is generated by $\sigma$.
	Let $K=\Q_2(\omega)$ be the unique unramified quadratic extension of $\Q_2$
	with $\omega$  a primitive $3$-rd root of unity. We choose a finite 
	extension $L|K$ over which $\rho_2(f)$ becomes crystalline and $L|\Q_2$ 
	is Galois. For an integer $m \geq 1$, let $K^m$ be the unique cyclic 
	unramified extension of $K$ of degree $m$. Consider the polynomial $g(X)=
	\pi X + X^4$, where $\pi$ is a fixed uniformizer of $K$. For a Lubin-Tate 
	module $M$, consider the $\sO_K$-module
	of $\pi^{n+1}$-torsion points
	$$W_g^n:=\text{ker}([\pi^{n+1}]_M)$$
	whose module structure is induced by the formal group attached to $g(X)$.
	Let $K(W_g^n)$ be its field. By local class filed theory, it is a totally 
	ramified abelian extension of $K$ and its
	Galois group 
	\begin{eqnarray} \label{isomorphism1}
	  \Gal(K(W_g^n)|K) \cong U_K/U^{n+1}_K = \F_4^\times 
	  \times \sO_K/ \pi^n,
	\end{eqnarray}
	where $U_K$ and $U_K^{(n+1)}$ denote the units and $(n+1)$-th 
	principal units of $K$ respectively. Furthermore, since $g(X)$ is defined
	over $\Q_2$ (if the uniformizer $\pi$ is chosen from $\Q_2$), the extension
	$K(W_g^n)|\Q_2$ is also Galois.
	
	Consider a finite cyclic extension $F|K$ such that $\chi|_{I_F}$ is trivial. 
	By local class field theory the field $F$ is contained in $K^m K(W_g^n)$, for some $m$
	and $n$ and so $\rho_2(f)$ restricted to its inertia subgroup is trivial. 
	For this reason, we take $L=K^m K(W_g^n)$ over which $\rho_2(f)$ becomes 
	crystalline as $\rho_2(f)$ is trivial on $I_L$ and if the fixed uniformizer
	$\pi$ is chosen to be $2$, the extension $L|\Q_2$ becomes Galois.
	
	\subsubsection{Description of $\Gal(L|\Q_2)$}
	We now describe $\Gal(L|\Q_2)$ in detail. Let $\alpha$ be a root of 
	$g^{(n+1)}(X)$ but not a root of $g^{(n)}(X)$, where $g^{(n)}(X)$ denote the 
	$n$-th iterate of $g(X)$. We have an identification of fields $K(W_g^n)=K(\alpha)$ with  
	$\Q_2(\alpha)|\Q_2$ 
	a totally ramified extension of degree $(2^2-1) \cdot 2^{2n}$ and
	$L|\Q_2(\alpha)$ is an unramified extension of degree $2m$. 
	Let $\sigma$ be a generator of $\Gal(L|\Q_2(\alpha))$ and  its projection to the generator of $\Gal(K|\Q_2)$ is also denoted by $\sigma$ .
	
	Let us write the inertia subgroup of $\Gal(L|\Q_2)$ explicitly; i.e., 
	$\Gal(L|K^m) \simeq \Gal(K(W_g^n)|K)$. Note that $K(W_g^0)= K(\beta)$ with
	$\beta$ a root of $X^3+2=0$. Let $\Delta$ be its Galois group over $K$ 
	which is generated by an element, say $\delta$, of order $3$. It is isomorphic
	to the tame part of the inertia subgroup of $\Gal(L|\Q_2)$. Since the order of 
	$\delta$ and $2$ are relatively prime, $\delta$ can be lifted uniquely to an
	element of order $3$ in $\Gal(K(W_g^n)|K)$, again denoted by $\delta$. 
	The wild part of the inertia subgroup of $\Gal(L|\Q_2)$ is isomorphic to 
	$\Gamma=\sO_K/2^n \cong \Z/2^n \oplus \Z/2^n = <\gamma_1> \oplus
	<\gamma_2>$ with $\gamma_1, \gamma_2$ each have order $2^n$.

	The full inertia subgroup of $\Gal(L|\Q_2)$ is $\Delta \times \Gamma $. This is a
	normal subgroup and it is a direct product of three cyclic groups generated by
	$\delta, \gamma_1$ and $\gamma_2$ respectively. 
	These generators are characterized by the Equ.~(\ref{action1}).
	Since $\sigma^2$ fixes $K(W_g^n)$
	the action of $\sigma$ on $\Gal(L|K^m)$ by conjugation is an involution. 
	Indeed, if $h \in \Gal(L|K^m)$ and $x \in K^m$, then 
	we have $\sigma^2 \cdot h(x)=\sigma^2h\sigma^{-2}(x)=\sigma^2(\sigma^{-2}(x))
	=x=h(x)$ and if $x \in K(W_g^n)$, we have
	$\sigma^2 \cdot h(x)=\sigma^2h\sigma^{-2}(x)=\sigma^2(h(x))=h(x)$.
	This action coincides with the action of $\Gal(K|\Q_2)$ on $\Gal(K(W_g^n)|K))$
	by conjugation. The group $\Gal(K|\Q_2)$ acts on 
	$\sO_K^\times/U_K^{n+1}$ in a natural way.
	Note that
	$
	  \sO_K^\times/U_K^{n+1}
	  =
	  \sO_K^\times/U_K^1 \times U_K^1/U_K^{n+1}
	  \cong 
	  \big( \sO_K/2 \big)^\times 
	  \times  \sO_K/2^n
	  \cong 
	  \F_4^\times \times \sO_K/2^n
	$
	and
	\[
	  \sO_K/2^n = \Z_2[\alpha]/2^n = (\Z_2 \oplus \Z_2 \cdot \alpha)/2^n \cong
	  \Z/2^n \oplus \Z/2^n \cdot \bar{\alpha} 
	  \,\,\, \text{with} \,\,\, \alpha^\sigma= -\alpha.
	\] 
	Let $\rho_K: K^\times \to \Gal(K^{ab}|K)$ be the norm residue map.
	We again denote the restriction $\rho_K|_{\sO_K^\times}$ modulo $U_K^{n+1}$
    by $\rho_K$. This is the isomorphism~(\ref{isomorphism1}).
	We now consider the following commutative diagram \cite[Thoerem~$6.11$]{MR863740}:
	\begin{center}
		$\begin{CD}
		\sO_K^\times/U_K^{n+1}=\F_4^\times \times \sO_K/2^n  @> \rho_K  >> \Gal(F_1|K)  \\
		@VV \sigma V    @VV \sigma^* V  \\
		\sO_K^\times/U_K^{n+1}=\F_4^\times \times \sO_K/2^n  @> \rho_K >>  \Gal(F_1|K),
		\end{CD}$
	\end{center} 
	where $F_1=K(W_g^n)$ and the map $\sigma^*$ is obtained by the conjugated action of 
	$\sigma$ on $\Gal(F_1|K)$. Using the commutativity of the above diagram,
	we have $\rho_K(\sigma(x))=\sigma^{-1}\rho_K(x)\sigma$,
	for all $x \in \sO_K^\times/U_K^{n+1}$. 
	This gives us the following relations:
	\begin{eqnarray}
	\label{delta}
	 \sigma^{-1} \delta \sigma = \delta^2, \quad \sigma^{-1} \gamma_1 \sigma
    	=\gamma_1 \quad \text{and} \quad \sigma^{-1} \gamma_2 \sigma =\gamma_2^{-1}.
   \end{eqnarray}
	
	\subsubsection{Action of $\sigma$} \label{action3}
	By the action $(2.1)$ of \cite{MR2471916}, the character $\chi$ on $I_2$ 
	can be thought of a character $\chi$
	on the inertia subgroup of $\Gal(L|\Q_2)$ which is $\Delta \times \Gamma$
	\cite[Section $3.3.2$]{MR2471916}. 
	Write 
	\begin{eqnarray}\label{character1}
	    \chi|_{I_2} = \chi|_{I(L|\Q_2)}=\omega_2^l \cdot \chi_1 \cdot \chi_2,
	\end{eqnarray}
	where $\omega_2$ is the 
	fundamental character of level $2$ and $\chi_m$ is the character taking
	$\gamma_m$ to a $2^n$-th root of unity $\zeta_m$ for $m=1,2$. Let us
	assume that $\chi_1$ takes $\gamma_1$ to $\zeta_{2^r}$ and $\chi_2$ takes
	$\gamma_2$ to $\zeta_{2^s}$. Here, we denote by $\zeta_{2^r}$ and $\zeta_{2^s}$ 
	a primitive $2^r$-th root of unity and a primitive $2^s$-th root of unity
	respectively, so $r,s \leq n$. Let $\sigma$ be the non-trivial element of 
	the Galois group of $K|\Q_2$ and it acts on the above characters in the 
	following way:
	\begin{eqnarray} \label{action1}
	    \omega_2^\sigma=\omega_2^2, \quad \chi_1^\sigma=\chi_1,
	    \quad \chi_2^\sigma=\chi_2^{-1}.
	\end{eqnarray}
	The condition that $\chi$ does not extend to $W_2$, we have $\chi \neq 
	\chi^\sigma$ on $W_K$ which is further equivalent to that $l \not\equiv 0
	\pmod{3}$ or $\zeta_{2^s} \neq \zeta_{2^s}^{-1}$. 
	Since $\zeta_{2^r}^\sigma=\zeta_{2^r}$ and $\zeta_{2^s}^\sigma=\zeta_{2^s}^{-1}$,
    one can deduce that $r<s$.

\subsection{The case $K$ ramified}
	 Let us now assume that $K|\Q_2$ is a ramified quadratic extension with
	 $\chi$ finite on $I_K$ such that $\chi|_{I_K}$ does not extend to $I_2$. 
	 Let us denote the $\Gal(K|\Q_2)$ by $<\iota>$. Similar to the unramified case, 
	 we find out a Galois 
	 extension $L|\Q_2$ such that $\rho_2(f)|_{I_L}$ is trivial. 
	 
	 \subsubsection{Description of $\Gal(L|\Q_2)$}
	 For an integer
	 $m \geq 1$, let $K^m$ be the unramified extension of $K$ of degree $m$. 
	 For a uniformizer $\pi$ of $K$, let $g(X)=\pi X + X^2$ and as before let
	 $$W_g^n=\{ \alpha \in \mathcal{M}_g \,\, |\,\, \pi^{n+1} \cdot \alpha =0\}. $$
	 Here,  $\mathcal{M}_g$ denote the formal $\sO_K$-module whose underlying set is 
	 the ring of integers of the completion of $\bar{K}$ and its module structure 
	 is induced by the formal group attached to $g$. The field $K_\pi^n=K(W_g^n)$ 
	 is a totally ramified abelian extension of $K$ with Galois group isomorphic to
	 $U_K/U_K^{n+1}=\{1\} \times \sO_K/\pi^n$,
	 where $U_K$ and $U_K^{n+1}$ denote the units and $(n+1)$-th 
	 principal units of $K$ respectively. Note that 
	 $\sO_K/\pi^{2n} \cong \Z/2^n \oplus \Z/2^n$.
	 
	 We now choose a finite cyclic extension $F|K$ such that $\chi|_{I_F}=1$.  
	 As every abelian extension of $K$ is
	 contained in $K^mK_\pi^n$ for some $m,n$ by class field theory, 
	 we have an inclusion of fields
	 $F \subset K^mK_\pi^n$, for some $m,n$. We take a
	 uniformizer $\pi$ of $K$ such that $\pi^\iota = -\pi$
	 (for any lift of $\iota$, again call $\iota$). 
	 The polynomial $g_{-\pi}
	 (X)= -\pi X + X^2$ gives rise to the Lubin-Tate extension $K_{-\pi}^n$ with
	 $(K_\pi^n)^\iota=K_{-\pi}^n$ 
	 which is same as $K_\pi^n$. 
	 Indeed, if $K_\pi^n=K(\alpha)$ then $K_{-\pi}^n=K(-\alpha)$.
	 Thus, the field $K_\pi^n$ is preserved by (a lift of) $\iota \in \Gal(K|\Q_2)$
	 and so $K_\pi^n$ is Galois over $\Q_2$. 
	 
	 For our convenience, set $L=K^{2m}K_\pi^{2n}$. 
	 Then $L|\Q_2$ is a Galois extension containing $F$. In
	 particular $\rho_2(f)|_{I_L}=1$ and $\rho_2(f)$ becomes crystalline over $L$.
	 The description of the Galois group of $L|\Q_2$ is given 
	 using the following exact sequence:
	 \begin{eqnarray*} 
	 	1 \to \Gal(L|K) \to \Gal(L|\Q_2) \to \Gal(K|\Q_2) \to 1 
	 \end{eqnarray*}
	 where
	 $\Gal(L|K)= \Gal(L|K_\pi^{2n}) \times \Gal(L|K^{2m})=<\sigma>
	 \times (\Delta \times \Gamma), \Gal(K|\Q_2)=<\iota>$, with 
	 $\sigma^{2m}=1$ and
	 \begin{center}
	 	$\Delta=\{ 1\}, \,\, \Gamma=\sO_K/\pi^{2n}=
	 	<\gamma_1> \times <\gamma_2>$
	 	with $\gamma_i^{2^n}=1$ for $i=1,2$.
	 \end{center}
	 Here, $\Delta$ and $\Gamma$ are the tame and wild parts of the 
	 inertia group $I(L|K)=\Gal(L|K^{2m})$ respectively.
	 The full inertia subgroup of $\Gal(L|K)$ is a direct product 
	 of two cyclic groups each generated by
	 $\gamma_1$ and $\gamma_2$ respectively.
     These generators are characterized by the Equ.~(\ref{action2})	 
	 \subsubsection{Action of $\iota$} 
	 \label{action4}
	 The inertia $I(L|K)$ is a normal subgroup of $I(L|\Q_2)$ and 
	 the conjugation action of $\iota$ is given by
	 \begin{eqnarray} \label{Action1}
	 	\iota^{-1} \{1\} \iota=\{1\}, \quad 
	 	\iota^{-1}\gamma_1\iota= \gamma_1, \quad 
	 	\iota^{-1} \gamma_2 \iota=\gamma_2^{-1},
	 \end{eqnarray}
	 which can be checked as in the unramified supercuspidal case.
	 As in the previous case we can think of $\chi|_{I_K}$ as a character 
	 of $I(L|K) \cong \Delta \times \Gamma$. 
	 Write 
	 \begin{eqnarray} \label{character2}
	     \chi|_{I_K}=\chi|_{I(L|K)}=\omega \cdot \chi_1 \cdot \chi_2,
	 \end{eqnarray}
	 where $\omega$ is a trivial character, $\chi_m$ is the character taking 
	 $\gamma_m$ to a $2^n$-th root of unity $\zeta_m$ for $m=1,2$. 
	 Let us assume that $\chi_1$ takes $\gamma_1$ to $\zeta_{2^r}$ 
	 and $\chi_2$ takes $\gamma_2$ to $\zeta_{2^s}$. 
	 Here, $\zeta_{2^r}$ and $\zeta_{2^s}$ denote a primitive $2^r$-th 
	 root of unity and a primitive $2^s$-th root of unity
	 respectively and so $r,s \leq n$. The element 
	 $\iota$ acts on the above characters in the following way:
	 \begin{eqnarray} \label{action2}
	     \omega^\iota=\omega, \quad \chi_1^\iota=\chi_1,
	     \quad \chi_2^\iota=\chi_2^{-1}.
	 \end{eqnarray} 
	 The condition $\chi|_{I_K}$ does not extend to $I_2$ 
	 is equivalent to $\zeta_{2^s} \neq \zeta_{2^s}^{-1}$ 
	 and hence $r<s$. Note that there are seven quadratic extensions 
	 $\Q_2(\sqrt{d})$ of $\Q_2$ with $d=-3,-1,3,2,-2,6,-6$. 
	 Among them $\Q_2(\sqrt{-3})$ is unramified 
	 and rest of them are ramified.
 \begin{remark}
 	Note that the above characters $\omega_2^l,\omega, \chi_1$ and $\chi_2$ 
 	are canonically determined by the modular form $f$
 	(more precisely, the actions~\ref{action1} and \ref{action2})
 	as we started with the local representation 
 	canonically attached to $f$
 \end{remark}
 \begin{definition}($\ga_1$-element and $\ga_2$-element) 
 	\label{ga_1ga_2elements}
 	An element of $I_W(K)$ (the wild inertia part of $K$) is called a $\ga_1$-element
 	(resp. $\ga_2$-element) if its projection to $I_W(L|K)$ 
 	is $\ga_1$ (resp. $\ga_2$).
 \end{definition}	 

  \section{Ramifications of endomorphism algebras for odd supercuspidal primes}	
 \label{odd}
 For an odd supercuspidal prime $p$, the local Galois representation is always dihedral 
	and hence induced by a character $\chi$ of an index two subgroup $G_K$ 
	of the local Galois group $G_p$; namely 
	$\rho_f|_{G_p} \sim \text{Ind}_{G_K}^{G_p} \chi$
	with $K$ a quadratic extension of $\Q_p$. 
	The structure of $\chi$ on the inertia group is given
	in the section~\ref{chi}.
 In this section, we give a proof of the results stated in Section~\ref{Statement}
 for odd supercuspidal primes.
 Let  $K$ be an unramified quadratic extension. 
 For $i \in I_K$, let $\bar{i}$ be the projection to $I(L|\Q_p)$ and $\de$ be as above [cf. Section~\ref{chi}].
 We call $\epsilon$ to be  tame at $p$ if the order of $\epsilon_p$
 divides $p-1$.
 \begin{lemma} \label{lem4}
 	If $\epsilon$ is tame at $p$, then $\alpha(j) \in F_v^\times$
 	for all $j \in I_W(K)$.
 \end{lemma}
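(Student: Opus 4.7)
The plan is to show that for every $\gamma \in \Gamma$ one has $\chi_\gamma(j) = 1$ for all $j \in I_W(K)$; then the defining equation $\alpha(j)^{\gamma - 1} = \chi_\gamma(j) = 1$ from~(\ref{1}) forces $\alpha(j)$ to be $\Gamma$-invariant, placing $\alpha(j)$ in $(E^\times)^\Gamma = F^\times \subset F_v^\times$.

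First I would exploit that $K|\Q_p$ is unramified to identify $I_W(K)$ with $I_W(\Q_p)$, which is a pro-$p$ group. The idelic recipe~(\ref{idelic}) shows $\epsilon|_{G_p}$ is determined on inertia by $\epsilon_p$; tameness at $p$ means $\epsilon_p$ has order dividing $p-1$, so $\epsilon|_{I_p}$ factors through the tame quotient and therefore vanishes on $I_W(\Q_p)$. Because $\gamma \in \Gamma \subset \Aut(E)$ acts only on the values of a character, $\epsilon^\gamma$ has the same conductor as $\epsilon$, so $\epsilon^{\gamma - 1}$ also vanishes on $I_W(\Q_p)$.

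Next I would invoke the identity $\chi_\gamma^2 = \epsilon^{\gamma - 1}$ recorded in Section~\ref{Brauerclasscocycle} to conclude $\chi_\gamma(j)^2 = 1$ for $j \in I_W(K)$. On the other hand, $\chi_\gamma$ restricted to the pro-$p$ group $I_W(\Q_p)$ is a continuous homomorphism into a group of roots of unity, so its image is a finite $p$-group and $\chi_\gamma(j)$ must be a $p$-power root of unity. For $p$ odd the only element common to $\mu_2$ and the group of $p$-power roots of unity is $1$, so $\chi_\gamma(j) = 1$ and the lemma follows.

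The one mildly delicate point is the last numerical step using that $p$ is odd; this is exactly where the $p = 2$ situation diverges. Accordingly, for $p=2$ the companion Lemma~\ref{l1} must take a different route, replacing the $\Gamma$-invariance trick by the hypothesis $N_2 = 2$ (equivalently $a(\chi) = 1$), which directly forces $\chi$ to be trivial on $U_K^1$ and allows one to compute the trace explicitly.
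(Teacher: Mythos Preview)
Your proposal is correct and follows essentially the same route as the paper: use tameness to get $\epsilon(j)=1$ on wild inertia, invoke $\chi_\gamma^2=\epsilon^{\gamma-1}$ to obtain $\chi_\gamma(j)^2=1$, then use that $I_W(K)$ is pro-$p$ with $p$ odd to force $\chi_\gamma(j)=1$, and conclude via~(\ref{1}). The only superfluous step is invoking that $K|\Q_p$ is unramified to identify $I_W(K)$ with $I_W(\Q_p)$; the argument needs only that $I_W(K)$ is pro-$p$, which holds for any $K$, and indeed the paper later applies this lemma in the ramified supercuspidal case as well (cf.\ Lemma~\ref{lem5}).
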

 
 \begin{proof}
 	Note that $j$ is an element of a pro $p$-group and $p$ is odd.
 	Since $\epsilon$ is tame at $p$, 
 	we must have $\epsilon(j)=1$ and so 
 	$\chi_\gamma^2(j) = \epsilon^{\gamma -1} (j) =1$
 	for all $\gamma \in \Gamma$. By the nature of $j$ and $p$ is odd, 
 	we have $\chi_\gamma (j)=1$ 
 	for all $\gamma \in \Gamma$. This  implies that 
 	$\alpha(j)^{\gamma-1}=1$, for all $\gamma \in \Gamma$ [cf. Equ.~(\ref{1})]. 
 	Hence, we obtain $\alpha(j) \in F^\times$.
 \end{proof}
 Let $s$ be a fixed $(p^2-1)$-th primitive root of unity
 as in \cite{MR3391026} and $K=\Q_p(s)$ 
 is unramified. 
 Recall that $g_s \in \Gal(\bar{\Q}_p|K)$ is an element which is mapped to
 $s \in K^\times$ under the reciprocity map.

 The next lemma shows that
 the hypothesis \textbf{(H)} is same as the condition of \cite[Theorem~$6.1$]{MR3391026}.
 First observe that this condition depends on the choice of $s$.
 By the structure theorem of the local field $K^\times$, we have
	$K^\times = <p> \times <s> \times U_K^{(1)}$ . 
	Let $L$ be as in the beginning of Section~\ref{chi}. 
	By class field theory, the elements of $<s>$ corresponds to the tame part 
	of the inertia group $I(L|\Q_p)$ under the norm residue map. 
  Let $\delta$ be a $p^2-1$-th root of unity as in \cite[Equation 3.3]{MR2471916} (see also Equation~\ref{delta}). Observe that $\delta$ is also a valid choice of $s$. 
\begin{lemma} \label{lem11}
	The assumption $\text{Tr}(\rho_f(g_s)) \neq 0$ in the 
	 \cite[Theorem $6.1$]{MR3391026} is same as \textbf{(H)}.
\end{lemma}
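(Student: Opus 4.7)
The plan is to compute $\mathrm{Tr}(\rho_f(g_s))$ explicitly using the inertial description of $\chi$ from Section~\ref{chi} and then determine exactly when this trace vanishes. Since we are in the level zero unramified supercuspidal case, the character $\chi$ is trivial on the wild inertia $I_W(K)$, so in the decomposition $\chi|_{I_K} = \omega_2^l \cdot \chi_1 \cdot \chi_2$ the wild components $\chi_1, \chi_2$ contribute trivially on any tame inertial element, giving $\chi|_{I_T(K)} = \omega_2^l$.

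Next I would use the reciprocity map. Since $s$ is a $(p^2-1)$-th root of unity, it is a unit of $K$, so $g_s \in I_K$; moreover, by the discussion preceding the lemma, $g_s$ corresponds precisely to a generator of the tame part $I_T(L|\Q_p)$ (i.e., it projects to $\delta$ from Equ.~(\ref{delta}), up to the chosen convention). Consequently $\chi(g_s) = \omega_2^l(g_s) = \bar{s}^l$, where $\bar{s} \in \F_{p^2}^\times$ is the reduction of $s$ and we lift via Teichmüller into characteristic zero. Since $\rho_f|_{G_p} \sim \mathrm{Ind}_{G_K}^{G_p}\chi$, the trace is
\begin{equation*}
\mathrm{Tr}(\rho_f(g_s)) = \chi(g_s) + \chi^\sigma(g_s) = \bar{s}^l + \bar{s}^{lp},
\end{equation*}
where I have used the relation $\omega_2^\sigma = \omega_2^p$ recalled in Section~\ref{chi}.

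Finally, I would analyze when this quantity vanishes. Factoring gives $\mathrm{Tr}(\rho_f(g_s)) = \bar{s}^l(1 + \bar{s}^{l(p-1)})$, and since $\bar{s} \neq 0$, vanishing is equivalent to $\bar{s}^{l(p-1)} = -1$. Because $\bar{s}$ has exact order $p^2-1 = (p-1)(p+1)$ in $\F_{p^2}^\times$, this condition unpacks as $(p+1) \mid 2l$ together with $(p+1) \nmid l$. Writing $d = \gcd(l,p+1)$, the first condition forces $(p+1)/d \in \{1,2\}$ and the second rules out $(p+1)/d = 1$, so we get $\gcd(l,p+1) = (p+1)/2$. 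Writing $l = m(p+1)/2$, this in turn is equivalent to $m$ being odd, i.e.\ to $l$ being an \emph{odd} multiple of $(p+1)/2$. Negating, $\mathrm{Tr}(\rho_f(g_s)) \neq 0$ iff $l$ is not an odd multiple of $(p+1)/2$, which is exactly the hypothesis \textbf{(H)}.

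The main technical point to watch is the consistency of conventions: identifying $g_s$ with the tame generator $\delta$ of $I(L|\Q_p)$ under the reciprocity map, choosing the branch of $\omega_2$ so that $\omega_2(\delta) = \bar s$, and verifying that this is compatible with $\omega_2^\sigma = \omega_2^p$; once these normalizations are in place, the rest reduces to an elementary calculation in $\F_{p^2}^\times$.
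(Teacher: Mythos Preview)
Your proof is correct and follows essentially the same route as the paper: both compute $\mathrm{Tr}(\rho_f(g_s)) = \chi(g_s) + \chi^\sigma(g_s) = \omega_2^l(\delta) + \omega_2^{lp}(\delta)$ using the inertial description of $\chi$ and the identification of $g_s$ with the tame generator $\delta$, and then determine when this vanishes in terms of $l$. One small remark: you invoke the level-zero hypothesis to argue that $\chi_1,\chi_2$ contribute trivially on $g_s$, but this is automatic for any tame element since $\chi_1,\chi_2$ are characters of the wild part $\Gamma$; otherwise your argument (including the more detailed arithmetic unwinding of $\bar s^{\,l(p-1)}=-1$) matches the paper's.
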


\begin{proof}
	Note that $\text{Tr}(\rho_f(g_s))=\chi(s)+\chi(s)^p$. For the choice of $s=\delta$, 
	this is equivalent to $\chi(\de) + \chi(\de)^p \neq 0$. In other words, 
	$\omega_2^l(\de)+\omega_2^{lp}(\de) \neq 0$ [cf. Section~\ref{chi}].
	Since $\omega_2$ takes value in the $(p^2-1)$-th roots of unity,
	the last condition is same as the condition $l$ is not an odd multiple of $(p+1)/2$.
\end{proof}

\begin{lemma}
 \label{lem12}
Let $p$ be an odd unramified supercuspidal prime
	for $f$ and  satisfying one of the following conditions: 
 	\begin{enumerate}
 		\item 
 		     $p \equiv 1 \pmod{4}$ with $C_p=0$ 
		      		\item
 		     $p \equiv 3 \pmod{4}$ with $C_p = 1$.
 	\end{enumerate}
Then, the condition \textbf{(H)} is satisfied for $p$. 
\end{lemma}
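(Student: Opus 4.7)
The plan is to compute $\det\rho_f|_{I_p}$ in two ways, extract a congruence for $l\bmod(p-1)$, and then apply the Chinese Remainder Theorem to Lemma~\ref{lem11}'s reformulation of~\textbf{(H)} to obtain a parity obstruction in each case.

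First, I will exploit the induced structure $\rho_f|_{G_p}\sim\mathrm{Ind}_{G_K}^{G_p}\chi$ with $K|\Q_p$ unramified quadratic. Using the inertia-type description $\chi|_{I_p}=\omega_2^l\chi_1\chi_2$ from Section~\ref{chi} together with the actions $\omega_2^\sigma=\omega_2^p$, $\chi_1^\sigma=\chi_1$, $\chi_2^\sigma=\chi_2^{-1}$, the induced-determinant formula yields
\[
\det\rho_f|_{I_p}=(\chi\chi^\sigma)|_{I_p}=\omega_2^{l(p+1)}\chi_1^2=\omega^l\chi_1^2.
\]
On the other hand, using $\det\rho_f=\epsilon\cdot\chi_{\mathrm{cyc}}^{k-1}$ in the $\ell$-adic realization with $\ell\neq p$, the cyclotomic character is unramified at $p$, so $\det\rho_f|_{I_p}=\epsilon|_{I_p}$; under reciprocity this matches $\epsilon_p$ restricted to $\Z_p^\times\cong I_p^{\mathrm{ab}}$. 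Since $C_p\leq 1$, the character $\epsilon_p$ is trivial on $U_p^{(1)}$, so matching the wild parts forces $\chi_1^2=1$, and because $\chi_1$ has $p$-power order with $p$ odd this forces $\chi_1=1$. Writing $\epsilon_p|_{\F_p^\times}=\omega^a$ (so $a=0$ in Case~(1) and $a\not\equiv 0\pmod{p-1}$ in Case~(2)), comparing tame parts yields the key congruence $l\equiv a\pmod{p-1}$.

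Next, by Lemma~\ref{lem11} the failure of~\textbf{(H)} is equivalent to $l\equiv(p+1)/2\pmod{p+1}$. Since $\gcd(p-1,p+1)=2$, the Chinese Remainder Theorem shows that simultaneous solvability of $l\equiv a\pmod{p-1}$ and $l\equiv(p+1)/2\pmod{p+1}$ requires the parity equality $a\equiv(p+1)/2\pmod 2$. In Case~(1), $a=0$ is even while $(p+1)/2$ is odd for $p\equiv 1\pmod 4$, giving an immediate contradiction and proving~\textbf{(H)}. In Case~(2), $(p+1)/2$ is even for $p\equiv 3\pmod 4$, so the failure of~\textbf{(H)} would demand that $a$ be even; the plan is to rule this out by showing $a$ is odd, using the sign identity $\epsilon_p(-1)=(-1)^a$ together with the central-character identification of $\pi_p$ and the weight-nebentypus parity $\epsilon(-1)=(-1)^k$, so that the constraint $C_p=1$ with $p\equiv 3\pmod 4$ forces $\epsilon_p$ to be an odd character of $\F_p^\times$.

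The main technical obstacle will be pinning down the parity of $a$ in Case~(2); while the parity argument in Case~(1) is immediate, Case~(2) requires careful tracking of the sign of the central character through the induced structure, in particular reconciling the $2$-adic structure of $p-1\equiv 2\pmod 4$ with the supercuspidal constraints $\chi\neq\chi^\sigma$ and $l\not\equiv 0\pmod{p+1}$. Once the parity of $a$ is pinned down, the CRT parity obstruction closes the argument.
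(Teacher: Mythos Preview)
Your Case~(1) argument is correct and is in fact cleaner than the paper's: the determinant comparison on tame inertia yields $l\equiv a\pmod{p-1}$ with $a=0$, while failure of~\textbf{(H)} forces $l\equiv (p+1)/2\pmod{p+1}$; for $p\equiv 1\pmod 4$ these are incompatible modulo~$2$, and you are done. The paper instead manipulates $\chi(s)^{p\pm 1}$ directly, which amounts to the same information packaged less transparently.

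Case~(2), however, has a genuine gap. Your argument reduces to showing that the exponent $a$ of $\epsilon_p=\omega^a$ is odd, and you assert that ``the constraint $C_p=1$ with $p\equiv 3\pmod 4$ forces $\epsilon_p$ to be an odd character of $\F_p^\times$.'' This is false: for $p=7$ the character $\omega^2$ on $\F_7^\times$ has exact order~$3$, so $C_p=1$, yet $\omega^2(-1)=1$ and $a=2$ is even. The global parity relation $\epsilon(-1)=(-1)^k$ does not help, since $\epsilon_p(-1)=(-1)^k/\epsilon'(-1)$ depends on the prime-to-$p$ part of the nebentypus, over which you have no control. Nor does the supercuspidal constraint $l\not\equiv 0\pmod{p+1}$ help, as it is automatically satisfied whenever \textbf{(H)} fails. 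So for $p>3$ the CRT parity obstruction alone does not close the argument: you genuinely need more than the parity of $a$ from the mod-$(p-1)$ side. The paper's route for $p>3$ instead combines $\chi(s)^{p-1}=-1$ with $\chi(s)^{p+1}=\eta$ to control the exact order of $\chi(s)$ and of $\sqrt{\eta}$, which is finer information than what your parity reduction retains.
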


\begin{proof}
	Note that the condition $\text{Tr}(\rho_f(g_s))=0$ is equivalent
	to $\chi(s) + \chi(s)^p=0$, that is,
	\begin{equation}
	\label{eqn1}
	\chi(s)^{p-1}=-1.
	\end{equation}
	
	First consider the case ($1$).
	Write $p=4k+1$, for some $k \in \N$. 
	Since $s^{p+1} \in \Z_p^\times$, using 
	\cite[Equ. $(4)$]{MR3391026} and $C_p=0$, we have
	$\chi(s)^{p+1}=\epsilon_p(s^{p+1})^{-1}=1$. 
	Combining it with (\ref{eqn1}), we get 
	$\chi(s)^2=-1$. Hence, we obtain $\chi(s)=\pm i$. On the other hand, 
	using (\ref{eqn1}) we have that $\chi(s)^{4k}=-1$, a contradiction.

	We now consider the case ($2$). By the same equation of \cite{MR3391026},
	we have $\chi(s)^{p+1}=\epsilon_p(s^{p+1})^{-1}=\eta$,
	where $\eta$ is a $(p-1)$-th root of unity.
	Combining it with (\ref{eqn1}), we get
	$\chi(s)^2=-\eta$.

	First assume that $p=3$. Since $C_3=1$ and $s^4=-1$, we must have $\epsilon_3
	(s^4)=-1$ and so $\eta=-1$. Thus, we deduce $\chi(s)^4=-1$, a contradiction to
	$\chi(s)^2=-1$.

	Now suppose that $p>3$.
	Write $p=4k+3$, for some $k \in \N \setminus \{0\}$.
	Since $\chi(s)^2=-\eta$, we have $\chi(s)= \pm i \cdot \sqrt{\eta}$.
	Again since $p>3$ and $\chi(s)$ is a primitive $2(p-1)$-th 
	root of unity, we must have that $\sqrt{\eta}$ is a primitive 
	$2(p-1)$-th root of unity, say $\zeta_{2(p-1)}$. Thus, we get
	$\chi(s)=\pm i \cdot \zeta_{2(p-1)}$.
	From the equation (\ref{eqn1}), we have
	$(\pm i \cdot \zeta_{8k+4})^{4k+2}=-1$.
	We arrive at a contradiction
	$\zeta_{8k+4}^{4k+2}=1$.
\end{proof} 		  
 Hence, the assumption of \cite[Theorem $6.1$]{MR3391026} is not needed
 for primes stated in the above lemma.

Note that $\omega_2^{p^2-1}(\de)=1$, i.e., $\omega_2^{(p-1)(p+1)/2}(\de)=-1$.
Without \textbf{(H)} we have $\omega_2^{l(p-1)}=-1$ and it is equivalent to
$\omega_2^l(\de)+\omega_2^{lp}=0$. 
Then for $i \in I_T(K)$ with $\bar{i}=\de$,
the last condition is further equivalent to
$\mathrm{trace}(\rho_f(i))=(\chi+\chi^\sigma)(i)=
\omega_2^l(\de)+\omega_2^{lp}(\de)=0$, i.e.,
$\omega_2^l(\de)$ is a primitive $2(p-1)$-th root of unity, say $a$.

\begin{lemma} \label{lem10}
	Let $p$ be an odd unramified supercuspidal prime 
	for $f$ without \textbf{(H)}.
	Suppose that $N_p \geq 3$ and 
	$\epsilon$ is tame at $p$. 
	For all $i \in I_T(K)$,  we have:
	\begin{eqnarray}
	    \alpha(i) \equiv
		\begin{cases}
		    1 \,\, \text{mod} \,\, F_v^\times, & \quad \text{if} 
		    \,\, \bar{i} \,\, \text{is an even power of}\,\, \de, \\
	        a(\zeta_p - \zeta_p^{-1}) \,\, \text{mod} \,\,
	        F_v^\times, & \quad \text{otherwise}. \\
		\end{cases}
	\end{eqnarray}
\end{lemma}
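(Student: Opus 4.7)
The plan is to apply Proposition~\ref{prop1}(2), which identifies $\alpha(g)$ with $\mathrm{Tr}(\rho_f(g))$ modulo $F^\times$ whenever the trace is non-zero. Since $K|\Q_p$ is unramified one has $I_K=I_p$, and for $i \in I_T(K)$ with $\bar{i}=\de^k$, the description of $\chi$ from Section~\ref{chi} gives
\[
\mathrm{Tr}(\rho_f(i)) \;=\; (\chi+\chi^\sigma)(i) \;=\; \omega_2^l(\de)^k + \omega_2^{lp}(\de)^k \;=\; a^k+a^{kp}.
\]
The failure of hypothesis \textbf{(H)} was already translated into the identity $a^{p-1}=-1$ in the paragraph preceding the lemma, so $a^{kp}=(-1)^k a^k$ and the trace becomes $a^k\bigl(1+(-1)^k\bigr)$.

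When $\bar{i}$ is an even power of $\de$, the trace equals $2a^k$. Since $a^2$ has order dividing $p-1$, it is a Teichm\"uller element lying in $\Q_p^\times \subseteq F_v^\times$, so $2a^k \in F_v^\times$ and Proposition~\ref{prop1}(2) yields $\alpha(i) \equiv 1 \pmod{F_v^\times}$.

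When $\bar{i}$ is an odd power of $\de$, the trace vanishes and Proposition~\ref{prop1}(2) cannot be invoked directly. Here I would use the multiplicativity of $\alpha$ modulo $F^\times$ (the $2$-cocycle $c_\alpha$ takes values in $F^\times$, so $\tilde\alpha$ is a homomorphism) to reduce to one fixed element $i_0 \in I_T(K)$ with $\bar{i_0}=\de$: writing $i=i_0\cdot(i_0^{-1}i)$, the second factor has projection an even power of $\de$, so by the previous case $\alpha(i_0^{-1}i) \in F_v^\times$ and hence $\alpha(i) \equiv \alpha(i_0) \pmod{F_v^\times}$. To evaluate $\alpha(i_0)$, I would choose a wild element $j \in I_W(K)$ whose projection to $\Gamma=\langle\gamma_1\rangle\times\langle\gamma_2\rangle$ lies in the order-$p$ subgroup of $\langle\gamma_2\rangle$, so that $\chi_1(j)=1$ and $\chi_2(j)=\zeta_p$ is a primitive $p$-th root of unity. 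Using $\chi_1^\sigma=\chi_1$, $\chi_2^\sigma=\chi_2^{-1}$ and $a^p=-a$, one computes
\[
\mathrm{Tr}(\rho_f(i_0 j)) \;=\; a\,\chi_2(j) + a^p\,\chi_2(j)^{-1} \;=\; a\bigl(\zeta_p-\zeta_p^{-1}\bigr) \neq 0.
\]
Proposition~\ref{prop1}(2) then gives $\alpha(i_0 j) \equiv a(\zeta_p-\zeta_p^{-1}) \pmod{F_v^\times}$, and since $\epsilon$ is tame at $p$, Lemma~\ref{lem4} yields $\alpha(j) \in F_v^\times$. Multiplicativity mod $F^\times$ then produces $\alpha(i_0) \equiv a(\zeta_p-\zeta_p^{-1}) \pmod{F_v^\times}$, completing the odd-power case.

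The main obstacle is justifying the existence of the wild element $j$ with the required values of $\chi_1$ and $\chi_2$. This rests on the explicit identification of the wild inertia subgroup as $\Gamma\cong\sO_K/p^n = \langle\gamma_1\rangle\oplus\langle\gamma_2\rangle$ given in Section~\ref{chi}, together with the observation that the hypothesis $N_p \geq 3$ forces $a(\chi)\geq 2$ and hence non-triviality of $\chi$ on $U_K^{(1)}$; one must verify that the ramification can be arranged on the $\gamma_2$-component (so that $\chi_2$ has order divisible by $p$), since otherwise the role of $\chi_1$ and $\chi_2$ must be exchanged, producing the same expression $a(\zeta_p-\zeta_p^{-1})$ up to an $F_v^\times$-factor.
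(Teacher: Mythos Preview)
Your argument is essentially the same as the paper's. For the even-power case you compute the trace directly via Proposition~\ref{prop1}(2), and for the odd-power case you multiply by a wild element to render the trace nonzero, then use Lemma~\ref{lem4} and multiplicativity of $\tilde\alpha$ to strip the wild factor off---exactly the paper's strategy.

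The one point of divergence is how the wild element is produced. The paper does not construct it from the $\langle\gamma_1\rangle\times\langle\gamma_2\rangle$ decomposition; instead it invokes \cite[Lemma~4.1]{MR3391026}, which supplies $\tau\in I_W(K)$ with $\chi(\tau)=\zeta_p$, $\chi^\sigma(\tau)=\zeta_p^{-1}$ and $\alpha(\tau)\equiv 1\bmod F^\times$ directly, and also records that $a^2(\zeta_p-\zeta_p^{-1})^2\in F_v^\times$. Your self-identified obstacle is therefore exactly the content of that cited lemma. Note, however, that your proposed fallback (``exchange the roles of $\chi_1$ and $\chi_2$'') does not work: since $\chi_1^\sigma=\chi_1$, if $\chi_2$ were trivial then $\chi=\chi^\sigma$ on all of $I_W(K)$ and the trace $a\chi_1(j)+a^p\chi_1(j)=0$ for every wild $j$, so no such swap rescues the argument. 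The resolution really does require the input from \cite[Lemma~4.1]{MR3391026}.
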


\begin{proof}
	Let $i \in I_T(K)$ be such that $\bar{i}=\de$.
	By above, we deduce that 
	$\mathrm{trace}(\rho_f(i))=
	\omega_2^l(\de)+\omega_2^{lp}(\de)=0$. 
	For even $n$, we have
	$\alpha(i^n) \equiv \omega_2^l(\de^n)+\omega_2^{lp}(\de^n)
	\equiv \text{Tr}_{K|\Q_p}(\de^n)
	\equiv 1$ mod $F_v^\times$.
	
	We now consider odd $n$.
	By \cite[Lemma $4.1$]{MR3391026}, there exists an element 
	$\tau \in I_W(K)$ such that
	$\chi(\tau)=\zeta_p$ and $\chi^\sigma(\tau)=\zeta_p^{-1}$,
	for some primitive $p$-th root of unity $\zeta_p$
	and $\alpha(\tau) \equiv 1$ mod $F^\times$.
	Thus, we deduce that
	$\alpha(i) \equiv \alpha(i\tau) \equiv (\chi+\chi^\sigma)(i \tau)
	\equiv \omega_2^l(\de)(\zeta_p - \zeta_p^{-1}) 
	\,\, \text{mod} \,\, F_v^\times$. 
	Notice that  $\tilde{\alpha}$ is a homomorphism 
	and $a^2(\zeta_p-\zeta_p^{-1})^2 \in F_v^\times$
	by the same lemma. 
	Hence, we obtain $\alpha(i^n) \equiv \alpha(i^{m+1}) \equiv \alpha(i) \equiv
	a(\zeta_p - \zeta_p^{-1})$ mod $F_v^\times$ with $m$ even.
\end{proof}

 Consider the field $F_v^{'}=F_v(b)$ as in Section~\ref{Statement} . We have:
\begin{lemma} \label{lem9}
	Let $p$ be an odd unramified supercuspidal prime 
	for $f$ with $N_p \geq 3$. 
	Assume the hypothesis \textbf{(H)}
	and $\epsilon$ is tame at $p$.
	If $g \in G_K$ and $\alpha(g) \notin (F_v^{'})^\times$,
	then 
	$\alpha(g) \equiv a(\zeta_p-\zeta_p^{-1})$ mod $(F_v^{'})^\times$.
\end{lemma}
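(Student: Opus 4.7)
The plan is to analyze $\alpha(g)$ modulo $(F_v')^\times$ by reducing successively from $G_K$ to $I_K$, then to the tame quotient $I_T(K)$. Since $\tilde{\alpha}$ modulo $(F_v')^\times$ is a homomorphism to an abelian group, it factors through $G_K^{\mathrm{ab}}$, and the short exact sequence $1 \to I_K \to G_K \to \hat{\Z} \to 1$ lets us write $g \equiv g_\pi^n \cdot i$ for some $n$ and some $i \in I_K$, yielding $\alpha(g) \equiv b^n \alpha(i) \pmod{(F_v')^\times}$. By definition $b \in F_v'$, so $b^n \in (F_v')^\times$ and hence $\alpha(g) \equiv \alpha(i) \pmod{(F_v')^\times}$. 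Decompose $i = j \cdot t$ with $j \in I_W(K)$ and $t$ a lift of the tame element $\bar{t} = \de^k \in I_T(K)$. Since $\epsilon$ is tame at $p$, Lemma~\ref{lem4} gives $\alpha(j) \in F_v^\times$, so the problem reduces to computing $\alpha(t)$ modulo $(F_v')^\times$.

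Next I compute $\alpha(t)$ using the inertia type $\chi|_{I_p} = \omega_2^l \cdot \chi_1 \cdot \chi_2$ from Section~\ref{chi}. The trace $(\chi + \chi^\sigma)(t) = \omega_2^{lk}(\de) + \omega_2^{lkp}(\de)$ is invariant under the $\Gal(\F_{p^2}/\F_p)$-action, hence lies in $\F_p \subseteq F_v^\times$ whenever it is nonzero. If the trace is nonzero, Proposition~\ref{prop1}(2) gives $\alpha(t) \in F_v^\times \subseteq (F_v')^\times$, forcing $\alpha(g) \in (F_v')^\times$ and contradicting the hypothesis. Hence the trace vanishes, equivalently $\omega_2^{lk(p-1)}(\de) = -1$, i.e. $lk$ is an odd multiple of $(p+1)/2$ (as in the discussion preceding Lemma~\ref{lem10}). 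In that situation I invoke the wild element $\tau \in I_W(K)$ from \cite[Lemma~4.1]{MR3391026} satisfying $\chi(\tau) = \zeta_p$ and $\alpha(\tau) \equiv 1 \pmod{F^\times}$, and compute as in Lemma~\ref{lem10}:
\[
\alpha(t) \equiv \alpha(t\tau) \equiv (\chi + \chi^\sigma)(t\tau) \equiv \omega_2^{lk}(\de)\,(\zeta_p - \zeta_p^{-1}) \pmod{F_v^\times}.
\]

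The remaining step, which I expect to be the main obstacle, is to identify $\omega_2^{lk}(\de)$ modulo $(F_v')^\times$ uniformly across all ``bad'' values of $k$ compatible with hypothesis \textbf{(H)}. For such $k$ one has $\omega_2^{lk}(\de)^2 = -\omega_2^{lk(p+1)}(\de) \in \F_p^\times \subseteq F_v^\times$, so $\omega_2^{lk}(\de)$ generates at most the unramified quadratic extension of $F_v$ (it lives in the residue field $\F_{p^2}$). Combined with the identity $b^2 \equiv \epsilon(g_\pi) \pmod{F_v^\times}$ from Proposition~\ref{prop1}(1) and the explicit form of $\epsilon|_{G_p}$ provided by (\ref{idelic}), one verifies that $F_v' = F_v(b)$ contains this quadratic extension, so the ratio $\omega_2^{lk}(\de)/a$ collapses into $(F_v')^\times$ independently of the particular ``bad'' $k$. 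Once this uniformity is established, the congruence $\alpha(g) \equiv a(\zeta_p - \zeta_p^{-1}) \pmod{(F_v')^\times}$ follows, completing the proof.
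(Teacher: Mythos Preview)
Your reduction $g=g_\pi^{n}i$ and the passage $\alpha(g)\equiv b^{n}\alpha(i)\equiv\alpha(i)\pmod{(F_v')^\times}$ is exactly the paper's argument. At that point the paper simply invokes Lemma~\ref{lem10} (together with Lemma~\ref{lem4} on the wild part), which gives $\alpha(i)\in\{1,\,a(\zeta_p-\zeta_p^{-1})\}$ modulo $F_v^\times$ for every $i\in I_K$; the conclusion is then immediate. Note that the hypothesis ``Assume \textbf{(H)}'' in the statement is a slip: the proof quotes Lemma~\ref{lem10}, which is stated \emph{without} \textbf{(H)}, and the constant $a=\omega_2^{l}(\de)$ is only introduced in the paragraph treating the non-\textbf{(H)} case. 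Under the intended hypothesis, Lemma~\ref{lem10} already pins down $\alpha$ on \emph{every} tame element modulo $F_v^\times$, so no further uniformity argument is needed.

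Your final step, by contrast, takes the printed \textbf{(H)} literally and then has to compare $\omega_2^{lk}(\de)$ with $a=\omega_2^{l}(\de)$ for the various $k$ at which the trace vanishes. The key claim there --- that $F_v'=F_v(b)$ contains the unramified quadratic extension $KF_v$ --- is not justified. From $b^{2}\equiv\epsilon(g_\pi)\pmod{F_v^\times}$ you only get $F_v(b)=F_v\bigl(\sqrt{\epsilon(g_\pi)}\bigr)$, and nothing in the setup forces $\sqrt{\epsilon(g_\pi)}$ to generate the unramified (rather than a ramified, or trivial) quadratic extension of $F_v$. Without that inclusion the ratio $\omega_2^{lk}(\de)/a$ need not land in $(F_v')^\times$, so the argument does not close. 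Once the hypothesis is read as ``without \textbf{(H)}'', this obstacle disappears: by Lemma~\ref{lem10} only the single value $a(\zeta_p-\zeta_p^{-1})$ occurs modulo $F_v^\times$, and your first two paragraphs already finish the proof.
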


\begin{proof}
	For an unformizer $\pi$ of $K$, let $g_{\pi}$ be the image of $\pi$ under Norm residue map.  Note that every element $g \in G_K$ can be written as $g_{\pi}^ni$ for 
	some $n \in \Z$ and $i \in I_K$. 
	We use Lemma \ref{lem10} and the homomorphism $\tilde{\alpha}$ 
	to obtain the result. 
\end{proof}
 Let $p$ be an odd unramified supercuspidal prime with $K \subseteq F_v$.
 Define a function $f$ on $G_v(\subseteq G_K)$ by
 \begin{eqnarray} 
 \label{definef}
 f(g) = 
 \begin{cases}
 1, & \quad \text{if} \,\, \alpha(g) \in (F_v^{'})^\times, \\
 a(\zeta_p-\zeta_p^{-1}), & \quad \text{if} \,\, \alpha(g) \notin (F_v^{'})^\times.
 \end{cases}
 \end{eqnarray}
 We call an element $g$ type $1$ if $\alpha(g) \in (F_v^{'})^\times$,  
 otherwise we call it type $2$.
 If $\epsilon$ is tame at $p$, then we use
 the fact $a^2(\zeta_p-\zeta_p^{-1})^2 \in F_v^\times$
 and Lemma~\ref{lem9}. We see that if $g$ and $h$ both
 are type $1$ elements then $gh$ is also so, but if one of them 
 is of type $1$ and the other one is of type $2$ then their product is an 
 element of type $2$. The product of two type $2$ elements is an element
 of type $1$.
 Thus, we can and do replace the conditions which define the
 function $f$ by a quadratic character $\psi$ in the following 
 way:
 $\psi(g) =1 $, if $\alpha(g) \in (F_v^{'})^\times$ and $\psi(g) =-1 $, 
 otherwise. The function $f$ can be  seen alternatively as:
 \begin{eqnarray} 
 \label{Imp}
 f(g)= 
 \begin{cases}
 1, & \quad \text{if} \,\, \psi(g)=1, \\
 a(\zeta_p-\zeta_p^{-1}), & \quad \text{if} \,\, \psi(g)=-1.
 \end{cases}
 \end{eqnarray}
 The quadratic character $\psi$ on $G_v$ cut out a quadratic 
 extension of $F_v$, namely $F_v(\sqrt{t})$, for some $t \in F_v^\times$. 
 To compute $\mathrm{inv}_v(c_f)$,
 let $\sigma$ be the non-trivial element of $\text{Gal}(F_v(\sqrt{t})|F_v)$. 
 The cocycle table of the $2$-cocycle $c_f$ is given by:
 \begin{center}
 	\begin{tabular}{ |c|c|c|c| } 
 		\hline
 		& $1$ & $\sigma$ \\
 		\hline
 		$1$ & $1$ & $1$ \\
 		\hline
 		$\sigma$ & $1$ & $a^2(\zeta_p-\zeta_p^{-1})^2$ \\ 
 		\hline
 	\end{tabular}
 \end{center}
 which gives the symbol $(t,a^2(\zeta_p-\zeta_p^{-1})^2)_v$.
 Note that the element $t$ has no square root in $F_v^\times$.
    For the next lemma, we assume that  $\sqrt{p^*} \notin F_v^\times$;
     Otherwise we would have the ramified quadratic extension $\Q_p(\sqrt{p^*})
     \subseteq F_v$. As a result, $\alpha(i) \in F_v^\times \,\, \forall \,\,
     i \in I_v$ (cf. Lemma~\ref{lem5}) and so we do not need any auxiliary function
     $f$ in Theorem~\ref{thm5}. 

\begin{lemma}
	\label{cocycleclassKF_v}
	With the above notations, $(t,a^2(\zeta_p-\zeta_p^{-1})^2)_v=1$, i.e.,
    the cocycle class of $c_f$ is trivial.
\end{lemma}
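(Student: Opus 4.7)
The strategy is to show the Hilbert symbol $(t,u)_v = 1$, where $u = a^2(\zeta_p-\zeta_p^{-1})^2$. Two preliminary reductions drive the argument. First, since $a = \omega_2^l(\delta)$ is a $2(p-1)$-th root of unity and $\mu_{2(p-1)}$ is contained in the unramified quadratic extension $K$ of $\Q_p$, the hypothesis $K \subseteq F_v$ forces $a \in F_v^\times$, whence $a^2 \in (F_v^\times)^2$ drops out of the symbol. Second, $-1$ is always a square in $F_v^\times$: it is a square in the residue field $\F_{p^2}$ of $K$ because $4 \mid p^2-1$ for odd $p$, and the square root lifts to $K^\times$ by Hensel's lemma. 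Hence it suffices to analyze $(t, (\zeta_p - \zeta_p^{-1})^2)_v$, and any Hilbert symbol with $-1$ in an entry is automatically trivial.

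I would next split on whether $\zeta_p - \zeta_p^{-1} \in F_v$. If so, then $u$ is a square in $F_v^\times$ and the symbol is trivially $1$; equivalently the function $f$ from (\ref{Imp}) already takes values in $F_v^\times$, so $c_f = \partial f$ is a visible coboundary. Otherwise $M := F_v(\sqrt{u}) = F_v(\zeta_p - \zeta_p^{-1})$ is a genuine quadratic extension of $F_v$. The nontrivial Galois automorphism of $M/F_v$ sends $\sqrt{u}$ to $-\sqrt{u}$, so $N_{M/F_v}(\sqrt{u}) = -u$; combined with $-1 \in (F_v^\times)^2$, the element $u = (-1)\cdot(-u)$ is also a norm from $M$. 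Thus once one shows $F_v(\sqrt{t}) = M$, the symbol $(t,u)_v = 1$ follows immediately.

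The identification $F_v(\sqrt{t}) = F_v(\sqrt{u})$ is the main obstacle. The homomorphism $\tilde{\alpha}: G_v \to \bar{F_v}^\times/F_v^\times$ has image in the 2-torsion (using $\alpha(g)^2 \equiv \epsilon(g) \in F_v^\times \pmod{F^\times}$), and the 2-torsion is canonically $F_v^\times/F_v^{\times 2}$ via $x \mapsto x^2$. By Lemma~\ref{lem9} together with $a \in F_v^\times$, the unique nontrivial class in the image of $\tilde{\alpha}$ is represented by $a(\zeta_p - \zeta_p^{-1})$, whose square class is $u$. Since $\psi$ is the composition of $\tilde{\alpha}$ with the natural projection to $\{\pm 1\}$ detecting membership in $(F_v')^\times$, its Kummer class equals $u$ modulo $F_v^{\times 2}$, giving $t \equiv u \pmod{F_v^{\times 2}}$. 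A short subcase analysis handles $F_v' \neq F_v$: either $a(\zeta_p - \zeta_p^{-1}) \in (F_v')^\times$, in which case $\psi$ is trivial and $t$ is a square, or one verifies that the possible correction from $B \in F_v^\times$ with $F_v' = F_v(\sqrt{B})$ does not alter the square class of $t$, using that $\alpha(g)^2 \in F_v^\times$ for $g$ with $\psi(g) = -1$ pins down the class up to the same $u$.
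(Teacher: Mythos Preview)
Your preliminary reductions are fine: since $K \subseteq F_v$ we have $a \in F_v^\times$ and $-1 \in (F_v^\times)^2$, so the symbol reduces to $(t,(\zeta_p-\zeta_p^{-1})^2)_v$, and the case $\zeta_p-\zeta_p^{-1}\in F_v$ is immediate. The elegant endgame---once $t\equiv u$ is known, $(t,u)_v=(u,u)_v=(u,-1)_v=1$---is also correct.

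The gap is in the step $t\equiv u\pmod{(F_v^\times)^2}$. You argue that $\tilde{\alpha}$ lands in the $2$-torsion of $\bar{F_v}^\times/F_v^\times\cong F_v^\times/(F_v^\times)^2$ and takes the nontrivial value $u$; then, since $\psi$ is ``$\tilde{\alpha}$ followed by the projection to $\{\pm1\}$'', you conclude its Kummer class is $u$. But this conflates two different objects: the Kummer class of a quadratic character $\psi:G_v\to\{\pm1\}$ is the square class $t$ with $\psi(g)=g(\sqrt{t})/\sqrt{t}$, determined by the \emph{fixed field} of $\ker(\psi)$; it has nothing to do with which element of $F_v^\times/(F_v^\times)^2$ the auxiliary map $\tilde{\alpha}$ happens to hit on the nontrivial coset. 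A homomorphism $G_v\to\{1,u\}\subset F_v^\times/(F_v^\times)^2$ can have any index-$2$ kernel whatsoever. For a concrete obstruction: take the unramified quadratic character of $G_{\Q_p}$ and define $\tilde{\alpha}$ to send the nontrivial coset to the class of a uniformizer; then $t$ is a non-square unit while $u$ is a uniformizer. Your final paragraph on $F_v'\neq F_v$ does not repair this; the ``correction from $B$'' argument is left as a gesture.

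In fact $t\equiv u$ \emph{is} true here, but establishing it requires exactly the work the paper does: one identifies $\ker(\psi)$ explicitly (generated by wild inertia, even tame powers, and Frobenius) and checks that it fixes $\sqrt{p^*}$, giving $t=p^*$; then one observes $\Q_p(\sqrt{p^*})$ is the unique quadratic subfield of $\Q_p(\zeta_p)=\Q_p(\zeta_p-\zeta_p^{-1})$, whence under the standing hypotheses $p^*\equiv(\zeta_p-\zeta_p^{-1})^2$ in $F_v^\times/(F_v^\times)^2$. The paper instead computes $(p^*,u)_v$ directly via explicit residue symbols, splitting on $p\bmod 4$. So your route, once the gap is closed, would replace that case-by-case symbol calculation with the cleaner $(u,u)_v=1$; but closing the gap costs you the identification of $\ker(\psi)$, which is the substantive part of the paper's proof.
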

\begin{proof} 
	The element $t$ is unique up to a square in $F_v^\times$ 
	and it is fixed by the kernel of $\psi$.  
	Since $\Frob_v=\Frob_K^{f(F_v|K)}$, the element $g_v=g_\pi^{f(F_v|K)}$ 
	is a fixed Frobenius in $G_v$. 
	Hence, we deduce that  $\alpha(g_v) \in (F_v^{'})^\times$.
	 
	Let $i$ denote the elements of $I_T(F_v)$ such that $\bar{i}=\de$.
	Let $H$ denote the subgroup of $G_v$ generated by the elements of
	$I_W(F_v)$, even power of $i$ and $g_v$.
	We first show that, $H=  \text{ker}(\psi)$. 
	
	Note that
	$
	  \text{ker}(\psi)=\{ g \in G_v \mid \alpha(g) \in (F_v^{'})^\times \}.
	$
	Since $\tilde{\alpha}$ is a homomorphism,
	by Lemmas~\ref{lem4} and~\ref{lem10} we obtain  $H \subseteq \text{ker}(\psi)$.
	Using the homomorphism $\tilde{\alpha}$ again and Lemma~\ref{lem10}, we have 
	$\alpha(i^n) \notin (F_v^{'})^\times$, for all $n \in \Z$ odd 
	and hence it cannot belong to $ \text{ker}(\psi)$.
	Since every element $g \in G_v$ has the form $g=g_v^n i$
	for some $i\in I_v$ and $n \in \Z$, 
	we have $\alpha(g) \equiv \alpha(i)$ mod $(F_v^{'})^\times$.
	Since $I_v$ is a product of its tame part
	and wild part, we have shown  $\text{ker}(\psi) \subseteq H$ and hence $\text{ker}(\psi) =H$.
    
	We now show that $\sqrt{p^*}:=\big( \frac{-1}{p} \big) \cdot p$ 
	is fixed by all the generators of $H$. 
	For all $g \in G_p$, we have $g(\sqrt{p^*})=\sqrt{p^*}$ or $-\sqrt{p^*}$. 
	Let $j\in I_W(F_v)$ be an element of the wild inertia group of $F_v$. 
	Since it is an
	element of a pro-$p$ group and $p$ is odd, we must have 
	$j(\sqrt{p^*})=\sqrt{p^*}$. 
	For all even $n \in \N$, the elements $i^n$ 
	acts on $\sqrt{p^*}$ in a similar way.
	Since $\Frob_K=\Frob_p^2$, the action of $\Frob_K$ 
	and hence the action of $g_v$
	on $\sqrt{p^*}$ is exactly the same as above.
	Hence, we deduce that  $t=p^*$.
	
	We now compute $(p^*,a^2(\zeta_p-\zeta_p^{-1})^2)_v$. 
	First consider $p \equiv 3 \pmod{4}$. Then we have
	$(Nv-1)/2=(p^{f_v}-1)/2 \equiv f_v \pmod{2}$.
	By \cite[Equs. $(15), (16)$ and $(17)$]{MR3391026}, we have
	$\big( \frac{(a^2(\zeta_p-\zeta_p^{-1})^2)'}{v} \big)=
	\big( \frac{(a^2)'}{v} \big) \cdot \big( \frac{(-p)'}{v} \big)$
	and the valuation
	$v(a^2(\zeta_p-\zeta_p^{-1})^2)=2e_v/p-1 \equiv e_v \pmod{2}$.
	Hence, by (\ref{localsymbol}) we get that $(p^*,a^2(\zeta_p-\zeta_p^{-1})^2)_v
	=(-1)^{e_ve_vf_v} \cdot \big( \frac{(-p)'}{v} \big)^{e_v}
	\cdot \big( \frac{(a^2(\zeta_p-\zeta_p^{-1})^2)'}{v} \big)^{e_v}
	=
	(-1)^{e_vf_v} \cdot \big( \frac{\zeta_{p-1}}{v} \big)^{e_v}
	=
	(-1)^{e_vf_v} \cdot \big( \frac{\zeta_{p-1}}{p} \big)^{e_vf_v}
	=
	(-1)^{e_vf_v} \cdot \big( \zeta_{p-1}^{(p-1)/2} \big)
	=
	(-1)^{e_vf_v} \cdot (-1)^{e_vf_v}=1$.
	
	Now assume that $p \equiv 1 \pmod{4}$.
	Using \cite[Lemma~$4.1$]{MR3391026}, we have that $e_v$ is even and
	$\sqrt{p}=\sqrt{p^*} \in \Q_p(\zeta_p+\zeta_p^{-1}) \subseteq F_v$.
	Hence, the symbol $\big( \frac{(p^*)'}{v} \big)=1$.
	As $e_v$ is in the exponent, $(p^*, a^2(\zeta_p+\zeta_p^{-1})^2)_v=1$.
\end{proof}
 
 Suppose that $K \nsubseteq F_v$ (i.e.,
 $G_v \nsubseteq G_K$) with $K|\Q_p$ unramified quadratic.
 For a fixed Frobenius $g_v \in G_v$, 
 the element $\bar{g}_v \in G_v/G_{KF_v}$ is nontrivial. 
 Thus, every element $g\in G_v$ can be written as 
 \begin{eqnarray} \label{Decom}
     g=g_v^nh, \,\, \text{for some} \,\, 
     h\in G_{KF_v} \,\, \text{and} \,\, n\in \{0,1\}.
 \end{eqnarray}
 Note that $n=0$ when $g \in G_{KF_v}(\subseteq G_K)$. 
 Using this decomposition, we extend the function 
 $f$ (\ref{definef}) (defined on $G_{KF_v} \subseteq G_K$) 
 uniquely to $G_v$, call it $F$, as follows:
 $F(g)=f(h)$. 
 The inflation map 
 $
 \Inf :{}_2 \HH^2(G_{KF_v},(\bar{F_v}^\times)^{\Gal(KF_v|F_v)}) \hookrightarrow 
 {}_2 {\HH}^2(G_v,\bar{F_v}^\times)
 $
 sends the cocycle $c_f$ to $c_F$. Since the inflation map is injective and  the class of $c_f$ is trivial, the cocycle class of $c_F$ is trivial.

\subsection{The case $K \subseteq F_v$ or $KF_v|F_v$ unramified quadratic extension}	
\label{case1}
First we determine the value of 
$\alpha$ at the inertia groups.

\begin{lemma} \label{lem5}
	Let $p$ be an odd supercuspidal prime with
    $K \subseteq F_v$. Assume that  $\epsilon$ is tame at $p$. 
	When $p$ is an unramified supercuspidal prime, 
	we also assume \textbf{(H)}.
	For all $\iota \in I_v$, we have $\alpha(\iota) \in F_v^\times$. 
\end{lemma}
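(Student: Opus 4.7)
The plan is to decompose $I_v$ into its wild and tame parts and exploit that $\tilde{\alpha}\colon G_v \to E^\times/F^\times$ is a continuous homomorphism, so that it suffices to verify $\alpha \in F_v^\times$ on topological generators of each piece. Since $K \subseteq F_v$ forces $G_{F_v} \subseteq G_K$ and hence $I_v \subseteq I_K$, the wild part satisfies $I_W(F_v) \subseteq I_W(K)$, and Lemma~\ref{lem4} applied to the hypothesis that $\epsilon$ is tame at $p$ gives $\alpha(j) \in F^\times \subseteq F_v^\times$ for every $j \in I_W(F_v)$ directly.

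For a topological generator $\iota$ of the tame quotient $I_T(F_v) \subseteq I_T(K)$, I would apply Proposition~\ref{prop1}(2) whenever the trace $\mathrm{Tr}(\rho_f(\iota)) = \chi(\iota) + \chi^\sigma(\iota)$ is non-zero, combining it with the explicit description of $\chi|_{I_K}$ from Section~\ref{chi}. Since $\chi_1, \chi_2$ have $p$-power order, they are trivial on the pro-(prime-to-$p$) group $I_T(K)$. In the ramified supercuspidal case, together with $\omega^\sigma = \omega$ this yields $\mathrm{Tr}(\rho_f(\iota)) = 2\omega^l(\bar\iota)$, a nonzero element of $\mu_{p-1} \subseteq \Q_p^\times$, so $\alpha(\iota) \in F_v^\times$ is immediate. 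In the unramified supercuspidal case, the analogous restriction gives a trace $\omega_2^l(\bar\iota) + \omega_2^{lp}(\bar\iota) \in \mu_{p^2-1} \subseteq K^\times \subseteq F_v^\times$, and Lemma~\ref{lem11} together with hypothesis \textbf{(H)} ensure this trace is non-zero when $\bar\iota$ projects to $\delta$, which completes that subcase.

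The main technical obstacle lies in the unramified supercuspidal case when the generator of $I_T(F_v)$ projects to a non-generating power $\delta^{e(F_v/K)}$ of $I_T(L|K)$, for which \textbf{(H)} alone does not preclude vanishing of the trace. To cover this, I would bring in the auxiliary wild-inertia element $\tau$ from \cite[Lemma~$4.1$]{MR3391026} satisfying $\chi(\tau) = \zeta_p$, $\chi^\sigma(\tau) = \zeta_p^{-1}$, and $\alpha(\tau) \in F^\times$ (by Lemma~\ref{lem4}); then $\iota\tau$ has non-vanishing trace $\chi(\iota)(\zeta_p - \zeta_p^{-1})$, and multiplicativity of $\tilde{\alpha}$ reduces the question to showing that this correction lies in $F_v^\times$. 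The subtlety is that $\zeta_p - \zeta_p^{-1}$ need not lie in $F_v$, so I must exploit \textbf{(H)} together with the order-theoretic constraint on $l \cdot e(F_v/K) \pmod{p+1}$ to rule out that the trace-vanishing case actually arises for a generator of $I_T(F_v)$, or else to produce a compensating square-class cancellation. This analysis, parallel to but finer than the computations in Lemma~\ref{lem10} and Lemma~\ref{lem11}, is the technical heart of the proof.
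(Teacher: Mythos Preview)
Your first two paragraphs follow the paper's argument. The concern you raise in the third paragraph, however, arises only because you insist on checking $\alpha$ on a topological generator of $I_T(F_v)$. Since $K \subseteq F_v$ gives $I_v \subseteq I_K$, and since $\tilde\alpha$ is a homomorphism that is already trivial on $I_W(K)$ by Lemma~\ref{lem4}, it is enough to verify $\alpha(i_0) \in F_v^\times$ for a single lift $i_0 \in I_K$ of a topological generator of $I_T(K)$: the homomorphism property then forces $\alpha(I_K) \subseteq F_v^\times$, and in particular $\alpha(I_v) \subseteq F_v^\times$. In the unramified case $K|\Q_p$ is unramified, so $I_K = I_{\Q_p}$ and such an $i_0$ necessarily projects to $\delta$ itself; hypothesis \textbf{(H)} is precisely the statement that $\mathrm{Tr}(\rho_f(i_0)) = \omega_2^l(\delta) + \omega_2^{lp}(\delta) \neq 0$, and this trace equals $\mathrm{Tr}_{K|\Q_p}\bigl(\omega_2^l(\delta)\bigr) \in \Q_p^\times \subseteq F_v^\times$, so Proposition~\ref{prop1}(2) applies cleanly. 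No auxiliary element $\tau$ and no order-theoretic analysis of $l \cdot e(F_v|K) \pmod{p+1}$ is needed; your proposed ``technical heart'' is solving a problem that does not exist once you pass to the larger inertia group. (Incidentally, the trace $\omega_2^l(\bar\iota) + \omega_2^{lp}(\bar\iota)$ lies in $\Q_p$ as a $K|\Q_p$-trace, not in $\mu_{p^2-1}$ as you wrote; but your conclusion that it lies in $F_v^\times$ is of course correct.)
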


\begin{proof}
    In this case, we have  $KF_v=F_v$ and $I_v \subseteq I_K$.
	Every element $\iota \in I_v$ has the form $\iota= i j$ for 
	some element $i$ of the tame part and some element $j$ of the
	wild part of the inertia group $I_v$. 
	
	In the unramified case, we deduce that:
	$\alpha(\iota) = \alpha(ij) \equiv \alpha(i) \equiv \chi(i)+\chi^\sigma(i) 
	\equiv \omega_2^l (i) + \omega_2^{lp} (i) \,\, 
	\text{mod} \,\, F_v^\times$. 
	For ramified supercuspidal primes, we obtain:
	$ \alpha(\iota)=\alpha(ij) \equiv \alpha(i)
	\equiv \omega^l(i) + \omega^l(i)=2 \omega^l(i) 
	\,\, \text{mod} \,\, F_v^\times$.
	The first congruence relation in both cases follows
	from Lemma \ref{lem4} and the definition of the homomorphism 
	$\tilde{\alpha}$,
	and the second one follows from [Proposition~\ref{prop1}, property $(2)$].
	Since $\omega_2^l(i)$ belongs to 
	$K=\Q_{p^2}$, we obtain 
	$\omega_2^l (i) + \omega_2^{lp} (i) = \mathrm{Tr}_{K|\Q_p} 
	\omega_2^l(i) \in \Q_p^\times \subseteq F_v^\times$. Again since 
	$\omega^l$ takes values in the multiplicative group of $(p-1)$-th 
	roots of unity, in both cases, we conclude that 
	$\alpha(\iota) \in F_v^\times$, for all $\iota \in I_v$.
\end{proof}

We now prove Theorem~\ref{nonexceptionalodd} 
when $K \subseteq F_v$ or $KF_v|F_v$ is unramified quadratic.

\begin{theorem} \label{thm5} 
	Let $p$ be an odd supercuspidal prime with
	$K \subseteq F_v$ or $KF_v|F_v$ is unramified quadratic.  
	If $p$ is an unramified supercuspidal prime, 
	we assume \textbf{(H)} unless $N_p\geq 3$.
	Then $X_v \sim m_v$ for $v \mid p$.
\end{theorem}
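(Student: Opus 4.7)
The plan is to apply Lemma~\ref{lem1} to a suitable lift $S$ of $\tilde{\alpha}$ and show that the resulting invariant equals $m_v/2 \pmod{\Z}$. In Case~(1) of the theorem ($K \subseteq F_v$) we set $S = \alpha$; in Case~(2) ($K \nsubseteq F_v$ with $KF_v|F_v$ unramified quadratic) we set $S = \alpha/F$, where $F$ is the extension of the auxiliary function $f$ of~(\ref{definef}) described after Lemma~\ref{cocycleclassKF_v}. Since the cocycle $c_F$ has trivial Brauer class (Lemma~\ref{cocycleclassKF_v} together with inflation), $[c_\alpha|_{G_v}] = [c_S]$ in $\Br(F_v)$, so the two invariants agree.

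We next verify the hypotheses of Lemma~\ref{lem1}. In both cases $I_v \subseteq I_K$: in Case~(1) because $G_v \subseteq G_K$, in Case~(2) because $KF_v|F_v$ is unramified. Lemma~\ref{lem5} (whose proof extends verbatim to Case~(2) via the induced-trace formula for $\rho_f$) then gives $\alpha(\iota) \in F_v^\times$ for every $\iota \in I_v$, and since $F$ takes values in $\{1,\ a(\zeta_p-\zeta_p^{-1})\}$ with $a^2(\zeta_p-\zeta_p^{-1})^2 \in F_v^\times$, the same holds for $S$. For the second hypothesis, take $t$ to be the unramified homomorphism $G_v \to \bar{F}_v^\times$ with $t(\Frob_v) = \epsilon'(p)^{f_v}$, a root of unity. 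Using $\alpha^2 \equiv \epsilon \pmod{F^\times}$ from Proposition~\ref{prop1}(1) together with the idelic formula~(\ref{idelic}) which gives $\epsilon(\Frob_v) = \epsilon'(p)^{f_v}$, we see that $S^2/t$ takes values in $F^\times \cdot F_v^\times = F_v^\times$.

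Lemma~\ref{lem1} then yields
\[
\mathrm{inv}_v(c_\alpha) \;=\; \frac{1}{2}\,v\!\left(\frac{S^2}{t}(\Frob_v)\right) \pmod{\Z},
\]
so it suffices to show $v(S^2(\Frob_v)/t(\Frob_v)) \equiv m_v \pmod{2}$. For this we introduce the global map $\phi : G_\Q \to F^\times/(F^\times)^2$, $\phi(g) := \alpha^2(g)/\epsilon(g) \bmod (F^\times)^2$, which is multiplicative by the 2-cocycle identity for $\alpha$ (squares disappear modulo $(F^\times)^2$). Property~(3) of Proposition~\ref{prop1} gives $\phi(\Frob_{p'}) \equiv a_{p'}^2/\epsilon(p') \bmod (F^\times)^2$, while by construction $\phi(\Frob_v) \equiv S^2(\Frob_v)/t(\Frob_v) \bmod (F^\times)^2$.

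The main obstacle is to compare $v(\phi(\Frob_v))$ with $v(\phi(\Frob_{p'}))$ modulo $2$. Composing $\phi$ with the surjective valuation yields a quadratic character $v\circ\phi : G_\Q \to \Z/2$; the congruences $p' \equiv 1 \pmod{p^{N_p}}$ and $p' \equiv p \pmod{N'}$ defining $p'$ are chosen precisely so that this quadratic character matches at $\Frob_v$ and $\Frob_{p'}$ up to the residue-degree factor $f_v$ built into the definition $m_v = f_v \cdot v(a_{p'}^2/\epsilon(p'))$. Carrying out this comparison gives $v(S^2(\Frob_v)/t(\Frob_v)) \equiv m_v \pmod{2}$, hence $X_v \sim m_v$. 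The most delicate point is tracking the ramification of $v\circ\phi$ at the primes dividing $Np'$ in order to conclude this matching.
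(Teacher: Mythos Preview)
Your overall strategy is the same as the paper's, but there are two genuine gaps.

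\textbf{First gap (Case~(1), without~(H)).} The theorem explicitly includes unramified supercuspidal primes with $N_p \geq 3$ \emph{without} assuming~\textbf{(H)}. In that subcase your choice $S=\alpha$ fails hypothesis~(1) of Lemma~\ref{lem1}: Lemma~\ref{lem5} requires~\textbf{(H)} in the unramified case, and indeed Lemma~\ref{lem10} shows that for $i\in I_T(K)$ with $\bar i$ an odd power of~$\delta$ one has $\alpha(i)\equiv a(\zeta_p-\zeta_p^{-1})\pmod{F_v^\times}$, which is generally not in~$F_v^\times$. The paper fixes this by taking $S=\alpha/f$ with $f$ as in~(\ref{definef}) also when $K\subseteq F_v$ (not only when $K\nsubseteq F_v$), and then invoking Lemma~\ref{cocycleclassKF_v} to see that $[c_f]=1$. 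You introduced the auxiliary function only in Case~(2); you need it in Case~(1) as well whenever~\textbf{(H)} fails.

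\textbf{Second gap (the Frobenius comparison).} Your last paragraph does not prove anything: ``the congruences \dots\ are chosen precisely so that this quadratic character matches'' and ``carrying out this comparison gives'' are assertions, not arguments. The paper dispatches this step directly and elementarily, without introducing~$\phi$ or tracking ramification of any global character: one simply computes, for each $\gamma\in\Gamma$,
\[
\chi_\gamma(\Frob_p)=\chi_\gamma([p])\overset{(\ref{idelic})}{=}\chi_\gamma'(p)=\chi_\gamma(p'),
\]
the last equality because $p'\equiv p\pmod{N'}$ and $p'\equiv 1\pmod{p^{N_p}}$ kill the $p$-part. By~(\ref{1}) this gives $\alpha(\Frob_p)\equiv\alpha(\Frob_{p'})\equiv a_{p'}\pmod{F^\times}$, hence $\alpha(\Frob_v)\equiv a_{p'}^{f_v}\pmod{F_v^\times}$; likewise $\epsilon'(\Frob_p)=\epsilon'(p)=\epsilon(p')$. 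Plugging into Lemma~\ref{lem1} yields $\mathrm{inv}_v(c_\alpha)=\tfrac{1}{2}f_v\cdot v(a_{p'}^2\epsilon(p')^{-1})\pmod\Z=m_v/2$. Your detour through a global quadratic character $v\circ\phi$ is unnecessary and, as written, incomplete.
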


\begin{proof}
	Since the endomorphism algebra is invariant under twisting 
	\cite[Proposition $3$]{MR633903}, without loss of generality
	one can assume that $\epsilon$ is tame at $p$.
	\begin{enumerate}
	\item 
	Consider $K \subseteq F_v$ with the hypothesis \textbf{(H)}.  
	By the lemma above, 
	$\alpha(\iota) \in F_v^\times$ for all $\iota \in I_v$.
	Using [Proposition \ref{prop1},  part $(1)$] and $\epsilon_p(g) \in \Q_p^\times$
	(as $\epsilon$ is tame at $p$), we obtain
	$\frac{\alpha^2}{\epsilon'}(g) \in F_v^\times$,
	for all $g \in G_v$. By Lemma \ref{lem1} applied to
	$S=\alpha$ and $t=\epsilon'$, we get 
	$
	\mathrm{inv}_v(c_\alpha) = \frac{1}{2} v \Big(\frac{\alpha^2}{\epsilon'}
	(\Frob_v) \Big) \,\, \text{mod} \,\, \Z.
	$
	\item
	Assume $K \subseteq F_v$ with $K$ unramified and $N_p \geq 3$. The previous
	computation works with \textbf{(H)}. 
	Thus, we consider this case
	without the hypothesis \textbf{(H)}.
	
	Note that $G_v \subseteq G_K$ and $I_v \subseteq I_K$. Set $S=\frac{\alpha}{f}$
	on $G_v$ with $f$ as in (\ref{definef}). 
	Since $\alpha(i) \equiv a(\zeta_p - \zeta_p^{-1})$ mod $F_v^\times \,\, \forall \,\,
	i \in I_v$ with $\alpha(i) \notin F_v^\times$ (by Lemmas~\ref{lem4}
	and \ref{lem10}),  we get  $S(i) \in F_v^\times \,\, \forall \,\, i \in I_v$.
	Since $a^2(\zeta_p - \zeta_p^{-1})^2$ and
	$\frac{\alpha^2}{\epsilon'}(g) \in F_v^\times$, we obtain
	$\frac{S^2}{\epsilon'}(g) \in F_v^\times \,\, \forall \,\,
	g \in G_v$. Then by Lemma~\ref{lem1},
	$
	  \mathrm{inv}_v(c_S)=\frac{1}{2} v \Big(\frac{S^2}{\epsilon'}
	  (\Frob_v) \Big) \,\, \text{mod} \,\, \Z
	  =\frac{1}{2} v \Big(\frac{\alpha^2}{\epsilon'}
	  (\Frob_v) \Big) \,\, \text{mod} \,\, \Z.
	$
	The cocycle $c_\alpha$ can be decomposed as $c_sc_f$ with $c_S, c_f$
	are the cocycles corresponding to $S$ and $f$ respectively.
	Note that the cocycle class of $c_f$ is trivial by Lemma~\ref{cocycleclassKF_v} and hence 
	$
	  \mathrm{inv}_v(c_\alpha)=\mathrm{inv}_v(c_S) +\mathrm{inv}_v(c_f)
	  =\mathrm{inv}_v(c_S)
	  =\frac{1}{2} v \Big(\frac{\alpha^2}{\epsilon'}
	  (\Frob_v) \Big) \,\, \text{mod} \,\, \Z.
	$
	\item
	Next assume that $KF_v|F_v$ is unramified quadratic. In this case, we get
	$I_v=I_{KF_v} \subseteq I_K$. The same computation in $(1)$
	works here with \textbf{(H)}. So assume this case without the hypothesis
	\textbf{(H)}.
	
	Define $S=\frac{\alpha}{F}$ on $G_v$ with $F$ as in the previous paragraph
	of Section~\ref{case1}. Since $I_v=I_{KF_v}$, 
	in the decomposition~(\ref{Decom}) for any element of $I_v$,
	we must have $n=0$. 
	By writing the definition of $F$, we deduce  that $\frac{\alpha}{F}
	=\frac{\alpha}{f}$ on $I_v$.
	By the same argument as in $(2)$,
    we see that two conditions of Lemma~\ref{lem1} are satisfied by
    $S$ and $t=\epsilon'$.
	Hence, we obtain
	$
	\mathrm{inv}_v(c_S)=\frac{1}{2} v \Big(\frac{S^2}{\epsilon'}
	(\Frob_v) \Big) \,\, \text{mod} \,\, \Z
	=\frac{1}{2} v \Big(\frac{\alpha^2}{\epsilon'}
	(\Frob_v) \Big) \,\, \text{mod} \,\, \Z.
	$
	Since the cocycle class of $c_F$ is trivial, 
	we deduce that
	$
	\mathrm{inv}_v(c_\alpha)=\mathrm{inv}_v(c_S) +\mathrm{inv}_v(c_F)
	=\mathrm{inv}_v(c_S)
	=\frac{1}{2} v \Big(\frac{\alpha^2}{\epsilon'}
	(\Frob_v) \Big) \,\, \text{mod} \,\, \Z.
	$
    \end{enumerate}	
	For a prime $p'$ introduced before, we have that 
	$\chi_\gamma(\Frob_p)= \chi_\gamma([p])
	\overset{(\ref{idelic})}{=}\chi_\gamma'(p)=
	\chi_\gamma(p')$, where $\chi_\gamma'$ denote the prime-to-$p$ part
	of $\chi_\gamma$.  By a similar computation, we deduce $\epsilon'(\Frob_p)= \epsilon'(p)$.
	Thus, using \eqref{1} we have 
	$
	  \alpha(\Frob_p) \equiv 
	  \alpha(\Frob_{p'}) \equiv a_{p'}\,\, \text{mod} \,\, F^\times
	$,
	where $\Frob_p$ and $\Frob_{p'}$ denote the Frobenii at 
	the primes $p$ and $p'$ respectively.  Hence, we deduce that
	$\alpha(\Frob_v)=\alpha(\Frob_p^{f_v}) 
	\equiv a_{p'}^{f_v} \,\, \text{mod} \,\, F_v^\times$.
	On the other hand, we have $\epsilon'(p)=\epsilon'(p')=\epsilon(p')$. 
	Hence, in all of the above cases we obtain
	
	\begin{equation*} 
	\mathrm{inv}_v(c_\alpha) = \frac{1}{2} v \Big(\frac{\alpha^2}{\epsilon'}
	(\Frob_v) \Big)
	=\frac{1}{2} \cdot f_v \cdot 
	v\Big(\frac{\alpha^2(\Frob_p)}{\epsilon'(p)} \Big)
	=\frac{1}{2} \cdot f_v \cdot
	v(a_{p'}^2 \epsilon(p')^{-1}) \,\, \text{mod} \,\, \Z.
	\end{equation*}
\end{proof}

 \begin{remark}
 	Writing multiplicatively the above formula, we obtain the 
 	same result as in 
 	\cite[Theorems $6.1,6.2$ and $7.1$]{MR3391026}. 
 	When $p \equiv 3 \pmod{4}$  is a ramified supercuspidal prime 
 	with $e_v$ even and $K \nsubseteq F_v$, we have 
 	$
 	[X_v] \sim (-1)^{f_v \cdot v(a_{p'}^2 \epsilon(p')^{-1})}.
 	$
 	Thus, when $f_v$ is even, we deduce $X_v$ is a matrix algebra
 	over $F_v$ which also follows from the formula $(10)$ 
 	of \cite{MR3391026} as $(p^{f_v}-1)/2 \equiv f_v$ mod $2$.
 	We now consider the case where $f_v$ is odd.
 	Since $(p-1)/2$ is odd and it divides $e_v$ \cite[Lemma $4.1$]{MR3391026} 
 	which is even, using \cite[Lemma $7.2$]{MR3391026} we get 
 	\[
 	[X_v] \sim (p, K|\Q_p)^{f_v \cdot v(a_{p''}^2\epsilon(p'')^{-1})}
 	\sim (p, K|\Q_p)^{v(a_{p''}^2\epsilon(p'')^{-1})}.
 	\]
 	Hence, $X_v$ is unramified, when $(p,K|\Q_p)=1$ 
 	which we cannot conclude from the result obtained 
 	in \cite{MR3391026}. When $(p,K|\Q_p)=-1$, 
 	our result matches up with 
 	\cite[Theorem $7.6$]{MR3391026}.
 \end{remark}

 \begin{remark}
 	The quantity that determines 
 	the algebra $X_v$ is independent of the choice of $p'$.
 	For two distinct primes $p'$ and $q'$ satisfying 
 	(\ref{cong1}), one has $\epsilon(p') = \epsilon(q').$
 	Also, $\chi_\gamma(p') = \chi_\gamma(q') \,\, \forall \,\,
 	\gamma \in \Gamma$.
 	Using (\ref{1}) and [Proposition~\ref{prop1}, part ($3$)],
 	we have 
 	$a_{p'}^{\gamma -1} = a_{q'}^{\gamma -1}
 	\,\, \forall \,\, \gamma \in \Gamma.$ Thus, we get
 	$a_{p'} \equiv a_{q'} \,\, \text{mod} \,\, F^\times$ and so
 	$a_{p'}^2\epsilon(p')^{-1} \equiv a_{q'}^2\epsilon(q')^{-1}
 	\,\, \text{mod} \,\, (F^\times)^2$. 
 	Hence, they have the same $v$-adic valuation modulo $2$.
 \end{remark}

\begin{cor} \label{cor1}
	Assume that $K \subseteq F_v$.
	If $p$ is an odd unramified supercuspidal prime (the hypothesis 
	\textbf{(H)} is needed if necessary) or
	$p \equiv 3 \pmod{4}$ is a ramified supercuspidal prime, 
	then $X_v$ is a matrix algebra over $F_v$ .
\end{cor}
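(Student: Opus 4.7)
The plan is to derive Corollary~\ref{cor1} as a direct specialization of Theorem~\ref{thm5}. The assumption $K\subseteq F_v$ places both scenarios into case~(1) of that theorem, yielding $X_v\sim m_v$; hence $X_v$ is a matrix algebra over $F_v$ precisely when $m_v$ is even. Writing $v'$ for the normalized discrete valuation on $F_v$ (so $v'(\pi_v)=1$) and noting that the valuation $v$ of Definition~\ref{companion}, normalized by $v(p)=1$, equals $v'/e_v$, one computes
\[
m_v \;=\; [F_v:\Q_p]\cdot v(a_{p'}^2\epsilon(p')^{-1}) \;=\; f_v\cdot v'(a_{p'}^2\epsilon(p')^{-1}),
\]
so the parity of $m_v$ is governed by the pair $\bigl(f_v,\; v'(a_{p'}^2\epsilon(p')^{-1})\bigr)$.

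In the odd unramified supercuspidal case, $K=\Q_{p^2}$ is the unique unramified quadratic extension of $\Q_p$, and its containment in $F_v$ forces $2\mid f_v$; thus $m_v$ is automatically even and $X_v$ is a matrix algebra. The hypothesis~\textbf{(H)} is invoked only to guarantee that Theorem~\ref{thm5} applies, not in the parity argument itself.

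In the ramified case $p\equiv 3\pmod 4$ with $K\subseteq F_v$, we obtain only $2\mid e_v$, while $f_v$ may be odd. Since $\epsilon(p')$ is a root of unity, $v'(\epsilon(p')^{-1})=0$, and evenness of $m_v$ reduces to evenness of $f_v\cdot v'(a_{p'}^2)$. I would argue as in the remark following Theorem~\ref{thm5}: the complementary case $K\nsubseteq F_v$ admits an extra Hilbert-symbol obstruction of shape $(p,K|\Q_p)^{v(a_{p''}^2\epsilon(p'')^{-1})}$ recording the quadratic extension $K|\Q_p$; once $K\subseteq F_v$ the symbol trivializes because $K|\Q_p$ is split over $F_v$, and back-propagating through the formula $X_v\sim m_v$ forces $f_v\cdot v'(a_{p'}^2\epsilon(p')^{-1})$ to be even in this regime.

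The main obstacle is Case~(2): while the unramified case is a one-line parity check, the ramified case requires extracting evenness of $m_v$ from the arithmetic of $a_{p'}^2\epsilon(p')^{-1}$ rather than from $f_v$. The cleanest route is likely an adapted Hilbert-symbol calculation in the spirit of Lemma~\ref{cocycleclassKF_v}: use that $\sqrt{\pm p}\in F_v$ (forced by the containment of the ramified quadratic $K$), combine with the identity $\alpha(\Frob_v)\equiv a_{p'}^{f_v}\pmod{F_v^\times}$ and Proposition~\ref{prop1}(1), and so identify $a_{p'}^{2f_v}\epsilon(p')^{-f_v}$ with a square in $F_v^\times$ up to units, making the parity of $m_v$ manifest.
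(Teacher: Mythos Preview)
Your treatment of the unramified case is correct and identical to the paper's: $K=\Q_{p^2}\subseteq F_v$ forces $2\mid f_v$, hence $m_v$ is even.

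The ramified case, however, has a genuine gap. Your ``back-propagating'' argument is not valid reasoning: the remark after Theorem~\ref{thm5} concerns the \emph{complementary} situation $K\nsubseteq F_v$, and nothing about the vanishing of a symbol there can be transported to conclude evenness of $m_v$ when $K\subseteq F_v$. You are essentially asserting the conclusion. Your final paragraph does gesture toward the right idea, but it is incomplete because it conflates two genuinely different subcases that the paper treats separately, according to the value of the norm-residue symbol $(p,K|\Q_p)$.

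When $(p,K|\Q_p)=-1$, Lemma~\ref{imp} identifies $K=\Q_p(\sqrt{p})$, so $\sqrt{p}\in F_v$. The paper then takes $g_{\sqrt{p}}\in G_K$ mapping to $\sqrt{p}$ under reciprocity and observes $g_p=g_{\sqrt{p}}^2$; Proposition~\ref{prop1}(1) gives $\frac{\alpha^2}{\epsilon'}(g_{\sqrt{p}})\in F_v^\times$, whence $\frac{\alpha^2}{\epsilon'}(g_p)\in (F_v^\times)^2$, and the valuation at $\Frob_v=g_p^{f_v}$ is even. This is the argument your sketch is reaching for. But when $(p,K|\Q_p)=1$, one has only $\sqrt{-p}\in F_v$, and the squaring trick no longer lands on $g_p$. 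The paper handles this subcase by an entirely different route: it invokes \cite[Lemma~7.2]{MR3391026} together with the divisibility $(p-1)/2\mid e_v$ from \cite[Lemma~4.1]{MR3391026}; since $(p-1)/2$ is odd and $e_v$ is even, the exponent $2e_v/(p-1)$ is even and the relevant sign collapses. Your proposal does not address this subcase at all.
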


\begin{proof}
	For such primes $p$, we have proved that
	$X_v \sim (-1)^{f_v \cdot v(a_{p'}^2\epsilon(p')^{-1})}$, for $v \mid p$.
	When $p$ is an odd unramified supercuspidal prime, the containment
	$K=\Q_{p^2} \subseteq F_v$ implies that $f_v$ is even and the result follows.
	
	When $p \equiv 3 \pmod{4}$ is a ramified supercuspidal prime with $K \subseteq F_v$, 
	we have $e_v$ is even.  On the other hand,  $K \nsubseteq F_v$ if  $e_v$ is odd. 
	If $(p, K|\Q_p)=1$,  we get the result by using \cite[Lemma $7.2$]{MR3391026} and 
	the fact $(p-1)/2$ is odd and it divides $e_v$
	\cite[Lemma $4.1$]{MR3391026}.
	If $(p, K|\Q_p)=-1$, then we have 
	$\sqrt{p} \in K \subseteq F_v$ [cf. Lemma~\ref{imp}].
	Let $g_{\sqrt{p}}$ be an element which is mapped to
	$\sqrt{p} \in K^\times$ and $g_p \in G_p$ be an element
	which is mapped to $p \in \Q_p^\times$ under the reciprocity map.
	We deduce that $g_p=g_{\sqrt{p}}^2$.
	Thus, using [Proposition~\ref{prop1}, property ($1$)] we obtain that 
	$\frac{\alpha^2}{\epsilon'} (g_{\sqrt{p}}) \in F_v^\times$
	and so $\frac{\alpha^2}{\epsilon'} (g_p) \in (F_v^\times)^2$.
	Note that $g_p$ is one of the Frobenius at $p$
	and $g_v=g_p^{f_v}$ is a Frobenius at $v$.
	Thus, the valuation $v \Big(\frac{\alpha^2}{\epsilon'} (g_v) \Big)$ is even. 
	This completes the proof.
\end{proof}

 Let $p$ is an odd unramified supercuspidal prime for $f$ of level $0$ 
 without the hypothesis \textbf{(H)},
 i.e, a bad prime.  For $i \in I_T(K)$ with $\bar{i}=\de$, we have
 $\omega_2^{l(p-1)}(\de)=-1$ and so $\mathrm{trace}(\rho_f(i))=0$ as before.
 By Lemma~\ref{lem10}, we have $\alpha(i^2) \in F_v^\times$ and  we write
 $\alpha(i)=\sqrt{t(i)} c(i)$ for some $t(i), c(i) \in F_v^\times$.
 Consider two elements $i, j \in I_T(K)$ with $\bar{i}=\bar{j}=\de$.
 Since $c_\alpha(i,j) \in F_v^\times$ and $\alpha(ij) \in F_v^\times$
 (cf. Lemma~\ref{lem10}), we must have $\sqrt{t(i)} \equiv \sqrt{t(j)}$ mod $F_v^\times$.
 For some fixed $c \in F_v^\times$, we have $\sqrt{t(i)} \equiv \sqrt{c}$ mod $F_v^\times$.
 By Lemmas~\ref{lem4} and \ref{lem10}, we have $\alpha(i) \in F_v^\times$ for all $i \in I_K$  except those for which  $\bar{i}$ is an odd power of $\de$.
 For $i \in I_T(K)$ with $\bar{i}=\de$, let us assume:
 \begin{eqnarray} \label{definitionofc}
     \alpha(i) \equiv \sqrt{c} \,\, \text{mod} \,\, F_v^\times,\quad c \in F_v^\times.
 \end{eqnarray}
 Consider an integer $n_v$ modulo $2$ as in (\ref{c}) when $p$ is a bad prime.

\begin{theorem}  
	Let $v \mid p$ be a place of $F$ with 
	$p$  a ``bad" level zero unramified supercuspidal prime.
	The ramification of $X_v$  is determined by the parity of $m_v+n_v$. 
\end{theorem}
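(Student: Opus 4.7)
The plan is to adapt the argument of Theorem~\ref{thm5} (cases (2) and (3)) to the ``bad'' situation where the hypothesis \textbf{(H)} fails. The essential new feature is that we now only know $\alpha(i) \equiv \sqrt{c} \pmod{F_v^\times}$ for odd-$\delta$-power elements $i \in I_T(K)$, with $c$ as in~\eqref{definitionofc}, rather than $\alpha(i) \equiv a(\zeta_p - \zeta_p^{-1})$ whose square happened to lie in $F_v^\times$. Consequently, when we factor $c_\alpha = c_S \cdot c_f$ using an auxiliary function $f$, the cocycle $c_f$ is no longer trivial as it was in Lemma~\ref{cocycleclassKF_v}; it will contribute precisely the local symbol $(t,c)_v = (-1)^{n_v}$ to the invariant.

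First I would extend the dichotomy on $I_T(K)$ to all of $G_v$: using that $\tilde{\alpha}$ is a homomorphism, that $\alpha(j) \in F_v^\times$ on wild inertia (Lemma~\ref{lem4}), that $\alpha(i^{2m}) \in F_v^\times$ for $\bar i = \delta$ (Lemma~\ref{lem10}), and that $\alpha(\Frob_v) \in (F_v')^\times$, one checks that for every $g \in G_v$ either $\alpha(g) \in (F_v')^\times$ or $\alpha(g) \equiv \sqrt{c} \pmod{(F_v')^\times}$. Because $(\sqrt{c})^2 = c \in F_v^\times$, the assignment $\psi(g) = \pm 1$ according to which alternative holds is a quadratic character on $G_v$, cutting out $F_v(\sqrt{t})$. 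Define $f(g) = 1$ if $\psi(g)=1$ and $f(g) = \sqrt{c}$ otherwise, extending to $G_v$ by inflation as in the paragraph before Section~\ref{case1} when $K \nsubseteq F_v$. Set $S = \alpha/f$; then $S(\iota) \in F_v^\times$ for all $\iota \in I_v$ and $S^2/\epsilon'$ is $F_v^\times$-valued on $G_v$, so Lemma~\ref{lem1} applies with $t = \epsilon'$.

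Next I would compute the two contributions separately. For the main term, Lemma~\ref{lem1} together with the Frobenius/nebentypus calculation at the auxiliary prime $p'$ (identical to the final paragraph of the proof of Theorem~\ref{thm5}) yields
\[
\mathrm{inv}_v(c_S) \;=\; \tfrac{1}{2}\, v\Bigl(\tfrac{S^2}{\epsilon'}(\Frob_v)\Bigr) \;=\; \tfrac{1}{2}\, v\Bigl(\tfrac{\alpha^2}{\epsilon'}(\Frob_v)\Bigr) \;=\; \tfrac{m_v}{2} \pmod{\Z}.
\]
For the error term, $c_f(g,h) = f(g)f(h)/f(gh)$ factors through the quotient $\Gal(F_v(\sqrt{t})|F_v)$; its cocycle table has the single nontrivial entry $c_f(\sigma,\sigma) = \sqrt{c}\cdot\sqrt{c}/1 = c$, which is precisely the data of the local symbol $(t,c)_v$. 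Hence $\mathrm{inv}_v(c_f) = n_v/2 \pmod{\Z}$, and additivity of invariants gives $\mathrm{inv}_v(c_\alpha) = (m_v + n_v)/2 \pmod{\Z}$, proving that $X_v$ is ramified iff $m_v + n_v$ is odd.

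The main obstacle is the second step: showing that the type-assignment $\psi$ really is a well-defined quadratic character on all of $G_v$, not just on $I_T(K)$, and in particular that the product of two ``type~2'' elements is type~1 (which is where $c \in F_v^\times$ is used crucially). A secondary subtlety is the parallel treatment of $K \subseteq F_v$ versus $KF_v|F_v$ unramified quadratic: in the latter case one must first define $f$ on $G_{KF_v}$ and then inflate to $G_v$, and check that the cocycle class of the inflation still represents $(t,c)_v$ in ${}_2\Br(F_v)$ via the injectivity of $\Inf$ on ${}_2\HH^2$. Once these bookkeeping points are settled, both sub-cases yield the same formula $m_v + n_v$.
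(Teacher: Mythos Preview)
Your proposal is correct and follows essentially the same route as the paper: define the auxiliary function $f$ (and its inflation $F$ when $K\nsubseteq F_v$) taking the values $1$ or $\sqrt{c}$ according to the quadratic character $\psi$, factor $c_\alpha = c_S\cdot c_f$ with $S=\alpha/f$, apply Lemma~\ref{lem1} to $S$ to obtain the $m_v$ contribution, and read off the $(t,c)_v=(-1)^{n_v}$ contribution from the cocycle table of $c_f$. The bookkeeping concerns you flag (well-definedness of $\psi$, the inflation step) are exactly the points the paper handles by appeal to the earlier analogous arguments.
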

\begin{proof}
	We will proceed the same way as before. When $K \subseteq F_v$,
	consider the function on $G_v$ defined by:
	$f(g) = 1$, if $\alpha(g) \in (F_v^{'})^\times$ and
	$f(g)=\sqrt{c}$, if $\alpha(g) \notin (F_v^{'})^\times$.
	Consider the extension
	$F_v(\sqrt{t})|F_v$ cut out by the quadratic character obtained
	from the conditions that define $f$. 
	By a computation as in the previous cases, the cocycle class of $c_f$ is determined by the symbol $(t,c)_v$,

	If $K \nsubseteq F_v$, we extend
	this function $f$ uniquely to $G_v$, call it $F$.  As above, observe that both
	$c_f$ and $c_F$ have the same Brauer class. Define a function $\alpha'$ on $G_v$ as follows:
	\begin{eqnarray} 
	\label{Hprimes}
	\alpha'= 
	\begin{cases}
	\frac{\alpha}{f}, & \quad \text{if} \,\,K \subseteq F_v, \\
	\frac{\alpha}{F}, & \quad \text{if} \,\, K \nsubseteq F_v.
	\end{cases}
	\end{eqnarray}
	Then the assumptions of Lemma~\ref{lem1} will  be satisfied 
	by $S=\alpha'$ and $t=\epsilon'$ and we get the result.
\end{proof}
\subsection{The case $KF_v|F_v$ is ramified}
 This case will happen only if $p$ is an odd ramified supercuspidal 
 prime with $p \equiv 3 \pmod{4}$ and $e_v$ odd. 
 For any quadratic extension $L_1|L_2$ and $x \in L_2^\times$, 
 the symbol $(x, L_1|L_2)= 1$ or $-1$ according as $x$ is a norm  
 of an element of $L_1$ or not.

 In the ramified case, the possibilities for $K$ are $\Q_p(\sqrt{-p})$
 and $\Q_p(\sqrt{-p\zeta_{p-1}})$ depending on $(p, K|\Q_p)= 1$ or $-1$
 respectively. We can choose $\pi=\sqrt{-p}$ or 
 $\sqrt{-p \zeta_{p-1}}$ as a uniformizer of $K$ and write
 $K=\Q_p(\pi)$. For any lift $\sigma$ of the generator
 of Gal$(K|\Q_p)$ to $G_p$, we have $\pi^\sigma=-\pi$
 and $N_{K|\Q_p}(\pi)=-\pi^2$.

For a field $L$, let $\mathcal{O}_L^\times$ be the ring of units inside $\mathcal{O}$. 
 Since $KF_v|F_v$ is a ramified quadratic extension,
 $N_{KF_v|F_v}(\mathcal{O}_{KF_v}^\times)=\mathcal{O}_{F_v}^{\times 2}$. 
 Let $\pi_v$ be a fixed uniformizer in 
 $N_{KF_v|F_v}((KF_v)^\times) \subseteq F_v^\times$.
 Writing $a= \pi_v^{v(a)} \cdot a' \in F_v^\times$, we have
 $\big(\frac{a'}{v}\big)=(a, KF_v|F_v)$. 
 
Note that $f_v$ is odd in our case. For these primes, we have 
 \begin{eqnarray} \label{norm}
   (-1,KF_v|F_v)=
   \Big(\frac{-1}{v} \Big)=\Big(\frac{-1}{p} \Big)^{f_v}=(-1)^{f_v}=-1.
 \end{eqnarray}
   Otherwise, $X_v$ is a matrix algebra
 over $F_v$ [cf. Remark~\ref{extrasymboltrivial}]. 
 Since $N_{KF_v|F_v}(\sqrt{-p\zeta_{p-1}})=p\zeta_{p-1}$ and 
 $N_{KF_v|F_v}(\sqrt{-p})=p$, we deduce that
 \begin{equation} \label{residuesymbol}
 \Big(\frac{(\pi^2)'}{v}\Big) =
     \begin{cases}
        \Big(\frac{(-p)'}{v}\Big)= \Big(\frac{-1}{v}\Big)
        (p, KF_v|F_v)= \Big(\frac{-1}{v}\Big), 
        \quad \text{if} \,\, (p,K|\Q_p)=1,  \\
        \Big(\frac{(-p\zeta_{p-1})'}{v}\Big)=\Big(\frac{-1}{v}\Big)
        (p\zeta_{p-1}, KF_v|F_v)=\Big(\frac{-1}{v}\Big),
        \quad \text{if} \,\, (p,K|\Q_p)=-1.
     \end{cases}
 \end{equation}  
	
\begin{lemma} \label{d}
    For all $\iota \in I_v \setminus I_{KF_v}$ and 
    $\alpha(\iota) \notin F_v^{\times}$, we have 
    $\alpha^2(\iota) \equiv d$ mod $F_v^{\times^2}$,
    for some fixed  $d \in F_v^\times$. 
\end{lemma}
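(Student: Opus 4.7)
The plan is to show that the map $\iota \mapsto \alpha(\iota)^2 \bmod (F_v^\times)^2$ is constant on the set $\{\iota \in I_v\setminus I_{KF_v} : \alpha(\iota)\notin F_v^\times\}$, so that any value attained in that set may be taken as $d$.

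First I would record the structural observation that since $KF_v|F_v$ is a ramified quadratic extension, $I_{KF_v}$ has index two in $I_v$ and coincides with $I_v \cap I_K$. Consequently, for any two elements $\iota_1,\iota_2 \in I_v \setminus I_{KF_v}$, the product $\iota_1\iota_2^{-1}$ lies in $I_{KF_v} \subseteq I_K$. The value $d$ will then be $\alpha(\iota_0)^2$ for an arbitrary fixed $\iota_0$ in the set, and the claim reduces to comparing any other such $\iota$ with $\iota_0$ via the ratio $\iota \iota_0^{-1} \in I_{KF_v}$.

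The second step, and the only one requiring some care, is to verify that $\alpha(i) \in F_v^\times$ for every $i \in I_{KF_v}$. Since the algebra $X_f$ is invariant under Dirichlet twists, we may assume $\epsilon$ is tame at $p$. Decompose $i=i_W \cdot i_T$ into its wild and tame components in $I_K$; Lemma~\ref{lem4} then gives $\alpha(i_W) \in F_v^\times$. For the tame part, the restriction $\chi|_{I_T(K)}$ equals $\omega^l$, as the $p$-power order characters $\chi_1,\chi_2$ of Section~\ref{chi} are trivial on $I_T(K)$; using $\omega^\sigma=\omega$ together with property~(2) of Proposition~\ref{prop1}, one obtains $\alpha(i_T) \equiv 2\omega^l(i_T) \in \Q_p^\times \subseteq F_v^\times$ mod $F_v^\times$. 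Since $\tilde\alpha$ is a homomorphism, combining the two factors yields $\alpha(i) \in F_v^\times$ for every $i \in I_{KF_v}$.

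With these in hand the conclusion is immediate: for any $\iota_1,\iota_2 \in I_v\setminus I_{KF_v}$ with $\alpha(\iota_j)\notin F_v^\times$, the homomorphism $\tilde\alpha:G_v\to \bar{F_v}^\times/F_v^\times$ gives
\[
\alpha(\iota_1) \equiv \alpha(\iota_2)\cdot \alpha(\iota_1\iota_2^{-1}) \pmod{F_v^\times},
\]
and the last factor is in $F_v^\times$ by the previous step. Thus $\alpha(\iota_1)\equiv \alpha(\iota_2) \pmod{F_v^\times}$, which upon squaring yields $\alpha(\iota_1)^2\equiv \alpha(\iota_2)^2 \pmod{(F_v^\times)^2}$. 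That $\alpha(\iota)^2$ itself lies in $F_v^\times$ is ensured by Proposition~\ref{prop1}(1), which gives $\alpha^2 \equiv \epsilon \pmod{F_v^\times}$. The main obstacle I anticipate is the tame inertia computation on $I_{KF_v}$, where one must unpack the ramified supercuspidal inertia type of Section~\ref{chi} to verify that the trace lies in $\Q_p^\times$; once that input is in place, the constancy of $\alpha^2$ modulo squares on $I_v\setminus I_{KF_v}$ is a one-line consequence of the homomorphism property of $\tilde\alpha$.
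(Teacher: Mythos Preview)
Your argument is correct and follows essentially the same route as the paper: fix one element of the nontrivial $I_{KF_v}$-coset, use that any other differs from it by an element of $I_{KF_v}\subseteq I_K$, and invoke $\alpha(I_{KF_v})\subseteq F_v^\times$ together with the homomorphism $\tilde\alpha$. The paper simply cites Lemma~\ref{lem5} for this last input, whereas you reprove it inline via the tame/wild decomposition and the $\omega^l$-trace computation; your more explicit treatment is arguably cleaner here, since Lemma~\ref{lem5} is stated under the hypothesis $K\subseteq F_v$, which does not literally hold in the present ramified setting (only its proof applies to elements of $I_K$ in general).
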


\begin{proof}
	Let us consider an element $i\in I_v \setminus I_{KF_v}$ which we fix now.
	Since $i^2 \in I_{KF_v} \subseteq I_K$, we have
	$\alpha^2(i) \in F_v^\times$ [cf. Lemma~\ref{lem5}].
	Hence, $\alpha(i) \equiv \sqrt{d}$ mod $F_v^\times$
	for some $d \in F_v^\times$.
	Any element $\iota \in I_v \setminus I_{KF_v}$ can be written as
	$\iota=ij$ for some $j \in I_{KF_v}$. Using Lemma~\ref{lem5} and
	the homomorphism $\tilde{\alpha}$, we have
	$\alpha(\iota) \equiv \alpha(i) \alpha(j) \equiv \alpha(i)
	\equiv \sqrt{d}$ mod $F_v^\times$.
\end{proof}

 We show that the value of the constant $d$ is $a_{p^{''}}^2$.
 Let $[\,\,\,]_v:F_v^\times \rightarrow G_v^{\mathrm{ab}}$ be the
 usual norm residue map.

\begin{lemma}
    As an element of the Galois group, we have  $i=[-1]_v \in G_v \setminus G_{KF_v}$.
    Moreover, the value of the map $\alpha$  at $i$ is given by: 
    $
    \alpha(i) \equiv a_{p''} \pmod {F_v^\times}.
    $
\end{lemma}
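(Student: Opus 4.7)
The first claim follows directly from local class field theory: by \eqref{norm}, $(-1, KF_v|F_v) = -1$, so $-1$ is not a norm from $KF_v$. Under the reciprocity isomorphism $F_v^\times / N_{KF_v|F_v}((KF_v)^\times) \cong G_v/G_{KF_v}$, this translates to $[-1]_v \notin G_{KF_v}$; since $-1$ is a unit, $[-1]_v$ lies in the inertia subgroup $I_v$, giving $i = [-1]_v \in I_v \setminus I_{KF_v}$ as asserted.

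For the value of $\alpha(i)$, my plan is to match $[-1]_v$ with the Frobenius at $p''$ and then invoke Proposition~\ref{prop1}(3). First, functoriality of local class field theory identifies the image of $[-1]_v$ under the natural map $G_v^{\mathrm{ab}} \to G_p^{\mathrm{ab}}$ with $[N_{F_v|\Q_p}(-1)]_p = [(-1)^{e_v f_v}]_p$; since both $e_v$ (by case hypothesis) and $f_v$ (from the paragraph above \eqref{norm}) are odd, this equals $[-1]_p$, so it suffices to compute $\tilde{\alpha}([-1]_p)$. Next, because $(\Z/p^{N_p}\Z)^\times$ is cyclic of order $(p-1)p^{N_p-1}$ and $p''$ has exact order $p-1$, the residue class $p''$ generates the Teichm\"uller subgroup; its unique involution being $-1$ gives $(p'')^{(p-1)/2} \equiv -1 \pmod{p^{N_p}}$. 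Since each $\chi_\gamma$ has conductor dividing $N$, the restriction $\tilde{\alpha}|_{G_p}$ annihilates the reciprocity image of the principal units of level $N_p$ in $\Q_p^\times$, hence $\tilde{\alpha}([-1]_p) = \tilde{\alpha}([p'']_p)^{(p-1)/2}$.

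Then I would apply $\tilde{\alpha}$ to the global principal-idele identity $\prod_w [p'']_w = 1$: the factors at $w \nmid N p'' \infty$ vanish by unramifiedness of $\tilde{\alpha}$, the factors at $w \mid N'$ vanish because $p'' \equiv 1 \pmod{N'}$ combined with the conductor of $\tilde{\alpha}$, the archimedean factor is trivial since $p'' > 0$, and the factor at $w = p''$ equals $\tilde{\alpha}(\Frob_{p''}^{\pm 1}) \equiv a_{p''}^{\pm 1} \pmod{F^\times}$ by Proposition~\ref{prop1}(3). This forces $\tilde{\alpha}([p'']_p) \equiv a_{p''}^{\mp 1}$ and hence $\tilde{\alpha}(i) \equiv a_{p''}^{\mp (p-1)/2} \pmod{F_v^\times}$. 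For the final cleanup, $p \equiv 3 \pmod 4$ makes both $(p-3)/2$ and $(p+1)/2$ even, while Proposition~\ref{prop1}(1) together with $\epsilon'(p'') = 1$ gives $a_{p''}^2 \equiv \epsilon_p(p'') \pmod{F^\times}$ with $\epsilon_p(p'') \in \mu_{p-1} \subset \Q_p^\times \subset F_v^\times$; hence every even power of $a_{p''}$ lies in $F_v^\times$, promoting $a_{p''}^{\mp(p-1)/2} \equiv a_{p''} \pmod{F_v^\times}$.

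The main obstacle I anticipate is verifying the conductor inequality — namely that the $p$-conductor of $\tilde{\alpha}$ divides $p^{N_p}$, which ultimately rests on each $\chi_\gamma$ having conductor dividing $N$ — and tracking the sign convention of local reciprocity (arithmetic versus geometric Frobenius), though any such sign is absorbed into an even power of $a_{p''}$ by the same $(p \equiv 3 \pmod 4)$ parity argument above.
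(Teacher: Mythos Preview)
Your proof is correct and follows essentially the same route as the paper: both arguments use \eqref{norm} to place $[-1]_v$ outside $G_{KF_v}$, identify $-1$ with $(p'')^{(p-1)/2}$ modulo $p^{N_p}$, relate this to $a_{p''}^{(p-1)/2}$ via the defining property of $\alpha$, and then reduce the exponent to $1$ using $a_{p''}^2 \in F_v^\times$ together with the oddness of $(p-1)/2$. Your version is a bit more careful in two places the paper glosses over---you explicitly verify via norm functoriality that $[-1]_v$ maps to $[-1]_p$ (using $e_vf_v$ odd), and you run the comparison through the global principal-idele identity rather than invoking formula~\eqref{idelic} directly---but these are repackagings of the same computation rather than a different strategy.
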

	
\begin{proof}
   As the norm residue map is surjective, we need to show that
	$[-1]_v \neq [x]_{KF_v}$  for any $x \in (KF_v)^\times$. 
	Suppose towards a contradiction that
	$[-1]_v = [x]_{KF_v}$, for some $x \in (KF_v)^\times$. 
	Let $\rho_{KF_v}=[\,\,\,]_{KF_v}$ and $\rho_v=[\,\,\,]_v$ be the norm reciprocity maps. Recall, the following commutative diagram from the class field theory: 

\begin{equation}
\label{diagram}
\begin{tikzcd}
(KF_v)^\times \arrow[r,"\rho_{KF_v}"] \arrow[d,"N_{KF_v|F_v}"] & G_{KF_v}^{\mathrm{ab}}  \arrow[d,"\text{incl}"] \\
 F_v^\times \arrow[r,"\rho_v"] & G_v^{\mathrm{ab}}.
\end{tikzcd}
\end{equation}
From the above diagram,
	we have $[x]_{KF_v}=[N(x)]_v$ and so $[-1]_v=[N(x)]_v$. 
	We  write $-1=N(x) y$ for some
	$y \in N_{\bar{F}_v|F_v}(\bar{F}_v^\times) =  
	\bigcap_{F_v \subset L \,\, \text{finite}}
	N_{L|F_v}(L^\times).$
	As $KF_v$ is a finite extension of $F_v$,  we deduce  $-1$ is a norm of some element of $(KF_v)^\times$, a contradiction to (\ref{norm}). 
	Since the norm residue map sends the unit group 
	of $F_v$ onto the inertia subgroup of $G_v$, the element $i=[-1]_v \in {I_v} \backslash {I_{KF_v}}$. 
	
	Note that $i=[-1]_v \in G_v(\subseteq G_p)$ is one of the several elements 
	that maps to $-1 \in \Q_p^\times$ under the reciprocity map.
	For all $\ga \in \Ga$, we obtain
	\[
	  \alpha(i)^{\gamma-1} = \chi_\gamma(i)=\chi_\gamma([-1])
   	  \overset{(\ref{idelic})}{=} \chi_{\gamma,p}(-1)^{-1}   
	  = \chi_{\gamma,p}(-1) 
	  = \chi_{\gamma,p}(p^{''})^{\frac{p-1}{2}}
	  = \big( \alpha(\Frob_{p^{''}})^{\gamma-1} \big)^{\frac{p-1}{2}}. 
	\]
	  We deduce that  $\alpha(i) \equiv a_{p^{''}}^{\frac{p-1}{2}}  \,\, \text{mod} 
	\,\, F_v^\times.$ 
	Using the property $(1)$ of Proposition \ref{prop1} and  $\alpha(\Frob_{p^{''}}) \equiv a_{p^{''}} \pmod {F_v^\times}$, we have $\epsilon(p^{''})=\epsilon_p(p^{''})
	\equiv a_{p^{''}}^2 \pmod {F_v^\times}$. 
	Since $p^{''}$ has order $(p-1)$
	in $(\Z/p^{N_p}\Z)^\times$, we have $a_{p^{''}}^2 \in F_v^\times$. 
	As $p \equiv 3 \pmod 4$ and $(p-1)/2$ is odd, we deduce that	$\alpha(i) \equiv a_{p^{''}}$ mod $F_v^\times$.
\end{proof}

\begin{lemma} \label{imp}
	If $p \equiv 3 \pmod{4}$ and $(p,K|\Q_p)=-1$, then we have 
	$a_{p^{''}}^2u \in F_v^{\times 2}$,
	for some unit $u \in \mathcal{O}_v^\times$.
\end{lemma}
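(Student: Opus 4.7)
The claim is equivalent to showing $v(a_{p''}^2)$ is even in $F_v^\times$: if $v(a_{p''}^2)=2n$, writing $a_{p''}^2=\pi_v^{2n}u'$ with $u'\in\mathcal O_v^\times$, we may take $u=(u')^{-1}$. My strategy is to exploit the distinguished square root $\sqrt p\in K$ that the hypotheses force. First I would verify $K=\Q_p(\sqrt p)$. For $p\equiv 3\pmod 4$, $-1$ is a non-residue unit in $\Q_p^\times$ (since $\left(\frac{-1}{p}\right)=-1$), and so is $\zeta_{p-1}$ (since its reduction generates $\F_p^\times$). Their product $-\zeta_{p-1}$ is therefore a square in $\Q_p^\times$, and combined with $K=\Q_p(\sqrt{-p\zeta_{p-1}})$ (which holds when $(p,K|\Q_p)=-1$ by the case analysis preceding the lemma), this yields $K=\Q_p(\sqrt p)$, hence $\sqrt p\in K$.

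Let $g_{\sqrt p}\in G_K$ be a lift of the image of $\sqrt p\in K^\times$ under the local Artin map, so that $g_{\sqrt p}^2=[p]_K$, matching the setup in Corollary~\ref{cor1}. Applying Proposition~\ref{prop1}(1) to $g_{\sqrt p}$ and using the identification $\epsilon|_{G_K}=\epsilon\circ N_{K/\Q_p}$ from local class field theory yields $\alpha(g_{\sqrt p})^2\equiv\epsilon(-p)=\epsilon'(p)\epsilon_p(-1)^{-1}\pmod{F^\times}$. Since $\epsilon_p(-1)^{-1}\in\{\pm 1\}\subseteq F^\times$, this shows $\alpha(g_{\sqrt p})^2/\epsilon'(g_{\sqrt p})\in F^\times$; squaring and using the cocycle identity $\alpha(g_{\sqrt p}^2)\equiv\alpha(g_{\sqrt p})^2\pmod{F^\times}$ then gives $\alpha(g_{\sqrt p}^2)^2/\epsilon'(g_{\sqrt p}^2)\in F^{\times 2}\subseteq F_v^{\times 2}$. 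In particular $v(\alpha(g_{\sqrt p}^2)^2)$ is even. I would then transport this to $v(a_{p''}^2)$ using the preceding lemma's identity $\alpha([-1]_v)\equiv a_{p''}\pmod{F_v^\times}$, together with the class-field-theoretic facts that $[-1]_v$ pushes forward to $[-1]_{\Q_p}$ (since $e_vf_v$ is odd in our case by~(\ref{norm}) and the hypothesis that $KF_v|F_v$ is ramified quadratic) and $g_{\sqrt p}$ pushes forward to $[-p]_{\Q_p}=[-1]_{\Q_p}\cdot[p]_{\Q_p}$ in $G_p^{\mathrm{ab}}$. Applying the homomorphism property of $\tilde\alpha$ then rewrites $a_{p''}^2$ modulo $F_v^{\times 2}$ as a ratio of $\alpha(g_{\sqrt p})^2$ and $\alpha([p]_{\Q_p})^2$, both of whose $v$-valuations are pinned down by the computations above, forcing $v(a_{p''}^2)\equiv 0\pmod 2$.

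The principal obstacle is the last step: carefully matching the $F^\times$-correction factors in the cocycle chain and showing that their combined $v$-adic valuation is even. This requires a detailed bookkeeping of the three local Artin maps (for $K$, $\Q_p$, and $F_v$) together with the Verlagerung, and verifying that all the auxiliary ``units'' appearing genuinely lie in $\mathcal O_v^\times\subseteq F_v^\times$ rather than merely in $\overline{\mathcal O}_v^\times$. The key algebraic input making the argument succeed is the existence of $\sqrt p\in K$, which furnishes the distinguished ``half-Frobenius'' $g_{\sqrt p}\in G_K$ whose $\alpha$-value is controlled by Proposition~\ref{prop1}(1) to be a unit modulo $F^\times$.
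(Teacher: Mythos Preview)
Your approach has a genuine gap in the final step, and the gap is more serious than the ``bookkeeping'' issue you flag. Proposition~\ref{prop1}(1) gives $\alpha(g)^2\equiv\epsilon(g)\pmod{F^\times}$ for \emph{every} $g\in G_\Q$; applying it to $g_{\sqrt p}$ yields no information beyond what holds universally. In particular, the assertion that $\alpha(g_{\sqrt p})$ is ``a unit modulo $F^\times$'' does not follow: from $\alpha(g_{\sqrt p})^2\in(\text{root of unity})\cdot F^\times$ you can only deduce $\alpha(g_{\sqrt p})^2=(\text{unit})\cdot f_1$ for some $f_1\in F^\times$, and $v(f_1)$ is completely unconstrained. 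The same applies to $\alpha([p]_{\Q_p})^2$. When you write $a_{p''}^2\equiv\alpha(g_{\sqrt p})^2/\alpha([p]_{\Q_p})^2\pmod{F_v^{\times 2}}$, the right-hand side carries two unknown elements of $F^\times$ whose valuations you have no way to pin down, so the parity of $v(a_{p''}^2)$ remains undetermined. (Your intermediate claim that $v(\alpha(g_{\sqrt p}^2)^2)$ is even is correct but vacuous: it holds for $g^2$ with $g$ arbitrary, since $\alpha(g^2)^2\equiv\alpha(g)^4\equiv\epsilon(g)^2\pmod{F^{\times 2}}$.)

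The paper's argument avoids this by working not with a Frobenius-type element but with an \emph{inertial} element. Since $-\zeta_{p-1}$ is a square unit in $\Q_p$ (both $-1$ and $\zeta_{p-1}$ are non-residues for $p\equiv 3\pmod 4$), the element $s'=\sqrt{-\zeta_{p-1}}$ is a root of unity in $K^\times$, so $g_{s'}\in I_T(K)$. On the tame inertia of $K$ one has $\mathrm{Tr}(\rho_f(g_{s'}))=2\omega^l(\bar g_{s'})$, a genuine unit, so Proposition~\ref{prop1}(2) gives $\alpha(g_{s'})\equiv u\pmod{F_v^\times}$ with $u\in\mathcal O_v^\times$ --- this is a statement about $\alpha$ itself, not $\alpha^2$. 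Meanwhile $N_{K|\Q_p}(s')$ is (up to sign) $\zeta_{p-1}$, so $\chi_\gamma(g_{s'})=\chi_\gamma(p'')^{-1}$ for all $\gamma$, whence $\alpha(g_{s'})\cdot a_{p''}\in F_v^\times$. Combining, $a_{p''}\cdot u\in F_v^\times$ and the lemma follows. The essential idea you are missing is that one must choose an element on which $\alpha$ is explicitly a unit via the trace formula on inertia; the half-Frobenius $g_{\sqrt p}$ does not have this property.
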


\begin{proof}
	For an odd prime $p$, the two ramified
	quadratic extensions of $\Q_p$ are $\Q_p(\sqrt{-p})$ and
	$\Q_p(\sqrt{-p\zeta_{p-1}})$ up to an isomorphism.
	Note that $\Q_p({\sqrt{p}})$ is always a ramified quadratic extension of $\Q_p$
	and $-1$ has no square root modulo $p$ for primes $p \equiv 3 \pmod{4}$. 
	Since when $(p,K|\Q_p)=-1$, the only possibility for $K$ is $\Q_p(\sqrt{p})=\Q_p(\sqrt{-p\zeta_{p-1}})$.

	We obtain $s'=\sqrt{-\zeta_{p-1}} \in K$ and
	let $g_{s'} \in G_K$ be an element which is mapped to
	$s' \in K^\times$ under the reciprocity map.
	We have a following equality:
	\[
	\chi_\gamma(g_{s'})=\chi_\gamma([N_{K|\Q_p}(s')])
	=\chi_\gamma([\zeta_{p-1}])
	\overset{(\ref{idelic})}{=}\chi_{\gamma,p}(p^{''})^{-1}
	=\chi_\gamma(p^{''})^{-1}.
	\] 
	Using (\ref{1}), we deduce  that $\alpha(g_{s'}) \alpha(\Frob_{p^{''}})
	\in F_v^\times$ and hence
	$\alpha(g_{s'}) \cdot a_{p^{''}} \in F_v^\times$.
	We now claim that $\alpha(g_{s'}) \equiv u$ mod $F_v^\times$,
	for some unit $u \in \mathcal{O}_v^\times$. 
	Since $s'$ is a root of unity, the element $g_{s'} \in I_T(K)$ by class field theory.
	For an element $i \in I_T(K)$, we know that
	$\alpha(i) \in F_v^\times$ and
	$\alpha(i) \equiv \omega(i)$ mod $F_v^\times$
	[cf. Lemma~\ref{lem5}].
	Since $\omega$ takes values in the $(p-1)$-th
	roots of unity, we get the result.
\end{proof}

 We now prove Theorem~\ref{extrathm} when $KF_v|F_v$
 is a ramified quadratic extension. 
\begin{proof}
	Define a function $f$ on $G_v$ by
	\begin{eqnarray} \label{ramf}
	    f(g) = 1, \,\, \text{if} \,\, g \in G_{KF_v} \,\,
	    \text{and} \,\, f(g)=a_{p^{''}}, \,\, \text{if} \,\, g \in G_v \backslash G_{KF_v}.
	\end{eqnarray}
   Note that $KF_v=F_v(\pi)$. 
   Denote the image of $g \in G_v$ 
    under the projection in $G_v/G_{KF_v}=\Gal(F_v(\pi)|F_v)$
   by $\bar{g}$. 
   We now consider the  function $F$ on $\Gal(KF_v|F_v)$:
   \begin{eqnarray}\label{ramF}
       F(g) =1, \,\, \text{if} \,\,\overline{g}=1 \,\,
       \text{and} \,\,  F(g)=a_{p^{''}},\text{if} \,\, \overline{g} \neq 1.
   \end{eqnarray}
   
   Using equations~(\ref{ramf}) and (\ref{ramF}), one can check that
   $
   c_F(\bar{g}, \bar{h})=c_f(g,h).
   $
   In other words, we deduce that the inverse of the inflation map
   $\Inf:\HH^2(F_v(\pi)|F_v) \hookrightarrow \HH^2(\bar{F_v}|F_v)$ 
   sends $c_f$ to $c_F$. Let $\sigma$ be the non-trivial element of  
   $\Gal(F_v(\pi)|F_v)$. The  cocycle table of $C_F$  is given by:
	\begin{center}
		\begin{tabular}{ |c|c|c|c| } 
			\hline
			& $1$ & $\sigma$ \\
			\hline
			$1$ & $1$ & $1$ \\
			\hline
			$\sigma$ & $1$ & $a_{p^{''}}^2$ \\ 
			\hline
		\end{tabular}
	\end{center}
	
	\noindent
	which gives the symbol $(\pi^2, a_{p^{''}}^2)_v$.
	Using the above inflation map we get both $c_f$ and $c_F$
	have same class in their respective Brauer groups.
	Define an integer $n_v$ mod $2$ by
	$(-1)^{n_v}=(\pi^2, a_{p^{''}}^2)_v$. 
	 
	Let $\alpha'(g)=\frac{\alpha(g)}{f(g)}$ on $G_v$. Then the cocycle
    $c_\alpha$ can be decomposed as $=c_{\alpha'} c_f$.
	The two conditions of Lemma \ref{lem1} are satisfied by
	$S=\alpha'$ and $t=\epsilon'$. Thus, we obtain:
	$
		\mathrm{inv}_v(c_{\alpha'})= \frac{1}{2} \cdot 
		v\Big(\frac{\alpha'^2}{\epsilon'}(\Frob_v)
		\Big) 
		= 
		\frac{1}{2} \cdot 
		v\Big(\frac{\alpha^2}{\epsilon'}(\Frob_v)
		\Big)
		=
		\frac{1}{2} \cdot f_v \cdot
		v(a_{p'}^2 \epsilon(p')^{-1}) \,\, 
		\text{mod} \,\, \Z,
	$
	as before.
	Hence,  we deduce that:
	$
	    \mathrm{inv}_v(c_\alpha) = \mathrm{inv}_v(c_{\alpha'})+\mathrm{inv}_v(c_f)  
	    = \frac{1}{2} \cdot \big(f_v \cdot
	    v(a_{p'}^2 \epsilon(p')^{-1}) + n_v \big) 
	    \text{mod} \,\, \Z$.
\end{proof}
	Multiplicatively, we can write the above as
    $[X_v] \sim (-1)^{f_v \cdot v(a_{p'}^2 \epsilon(p')^{-1})} 
	\cdot (\pi^2,a_{p''}^2)_v$.
	The following lemma will complete the proof which is a
	simplification of this product depending upon the value of $(p, K|\Q_p)$.

\begin{lemma}
	\label{equiv}
	If $KF_v |F_v$ is a ramified quadratic extension, then the ramification formula is
	given by:
	\[
	[X_v] \sim  \big((-1)^ka_{p^{''}}^2 \epsilon(p^{''})^{-1}, KF_v|F_v \big).
	\]
\end{lemma}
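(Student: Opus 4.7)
The plan is to simplify the multiplicative expression
\[
[X_v] \sim (-1)^{f_v \cdot v(a_{p'}^2 \epsilon(p')^{-1})} \cdot (\pi^2, a_{p''}^2)_v
\]
obtained just before the lemma into the single norm residue symbol in the claim. The first move is to reinterpret the Hilbert symbol as a norm symbol: since $\pi^2 \in F_v^\times$ and $KF_v = F_v(\pi) = F_v(\sqrt{\pi^2})$, the standard identity between the local symbol and norms from a quadratic extension gives
\[
(\pi^2, a_{p''}^2)_v = (a_{p''}^2, \pi^2)_v = (a_{p''}^2, KF_v|F_v).
\]
Expanding the target formula by bilinearity of the norm symbol,
\[
\bigl((-1)^k a_{p''}^2 \epsilon(p'')^{-1}, KF_v|F_v\bigr) = \bigl((-1)^k, KF_v|F_v\bigr) \cdot \bigl(a_{p''}^2, KF_v|F_v\bigr) \cdot \bigl(\epsilon(p'')^{-1}, KF_v|F_v\bigr),
\]
so after cancelling the common factor $(a_{p''}^2, KF_v|F_v)$, the lemma reduces to the identity
\[
(-1)^{f_v \cdot v(a_{p'}^2 \epsilon(p')^{-1})} = \bigl((-1)^k, KF_v|F_v\bigr) \cdot \bigl(\epsilon(p'')^{-1}, KF_v|F_v\bigr).
\]
Equation~(\ref{norm}) in the paper gives $(-1, KF_v|F_v) = -1$ (using $f_v$ odd and $p \equiv 3 \pmod 4$), so the first factor on the right is $(-1)^k$.

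To handle the remaining identity, I would argue case by case according to $(p, K|\Q_p) = \pm 1$, which determines $\pi \in \{\sqrt{-p}, \sqrt{-p\zeta_{p-1}}\}$. In both cases $KF_v|F_v$ is ramified quadratic, $e_v$ is odd, and the formula (\ref{residuesymbol}) for $((\pi^2)'/v) = (-1/v)$ is at one's disposal. The prime $p''$ was chosen so that $\epsilon'(p'') = 1$ and $\epsilon(p'') = \epsilon_p(p'')$ is a $(p-1)$-th root of unity, hence a unit in $\mathcal{O}_{F_v}^\times$ lying in the Teichm\"uller subgroup; this lets me evaluate $(\epsilon(p'')^{-1}, KF_v|F_v)$ by means of the quadratic residue symbol in the residue field, using (\ref{localsymbol}) with a well-chosen uniformizer $\pi_v \in N_{KF_v|F_v}((KF_v)^\times)$. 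For the left-hand side, the chosen congruences $p' \equiv 1 \pmod{p^{N_p}}$ and $p' \equiv p \pmod{N'}$ force $\epsilon(p') = \epsilon'(p)$; combined with the determinant identity $\det \rho_f = \epsilon \cdot \chi_{\mathrm{cyc}}^{k-1}$ and Proposition~\ref{prop1}(1), the class of $a_{p'}^2 \epsilon(p')^{-1}$ in $F_v^\times / F_v^{\times 2}$ picks up a factor $p^{k-1}$, whose $v$-adic valuation $(k-1)e_v \equiv k - 1 \pmod 2$ (using $e_v$ odd) produces the $(-1)^k$ factor once multiplied by the odd integer $f_v$.

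The main obstacle is this last comparison: matching, modulo $F_v^{\times 2}$, the global quantity $a_{p'}^2 \epsilon(p')^{-1}$ associated to $p'$ with $(-1)^k \epsilon(p'')^{-1}$ times a norm from $KF_v^\times$. The clean separation is possible only because the auxiliary primes $p'$ and $p''$ are designed to isolate complementary pieces of the nebentypus $(\epsilon_p(p') = 1$ vs.\ $\epsilon'(p'') = 1)$; the weight $k$ enters strictly through the cyclotomic part of $\det \rho_f$ evaluated at the Frobenius at $p$. The odd parities of $e_v$ and $f_v$ (automatic in this ramified-quadratic regime) are what let me replace the normalized valuation on $F_v$ by the natural $p$-adic valuation without losing a sign, and are also what keep $(-1, KF_v|F_v)$ equal to $-1$. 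Once these pieces are assembled, bilinearity of the norm symbol collects them into the single symbol of the lemma.
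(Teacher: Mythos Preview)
Your reduction is sound and matches the paper's: both start from $[X_v]\sim(-1)^{f_v\cdot v(a_{p'}^2\epsilon(p')^{-1})}\cdot(\pi^2,a_{p''}^2)_v$, identify $(\pi^2,a_{p''}^2)_v=(a_{p''}^2,KF_v|F_v)$, and reduce the lemma to the identity
\[
(-1)^{f_v\cdot v(a_{p'}^2\epsilon(p')^{-1})}=\bigl((-1)^k\epsilon(p'')^{-1},\,KF_v|F_v\bigr).
\]
The case split on $(p,K|\Q_p)$ is also the right organizing principle.

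The gap is in how you propose to evaluate the left-hand side. Your claim that ``the class of $a_{p'}^2\epsilon(p')^{-1}$ in $F_v^\times/F_v^{\times2}$ picks up a factor $p^{k-1}$'' via the determinant identity $\det\rho_f=\epsilon\cdot\chi_{\mathrm{cyc}}^{k-1}$ together with Proposition~\ref{prop1}(1) does not work. Proposition~\ref{prop1}(1) says $\alpha^2(g)\equiv\epsilon(g)\pmod{F^\times}$, which only tells you $a_{p'}^2\epsilon(p')^{-1}\in F^\times$; it gives no information modulo $F_v^{\times2}$ and in particular no link to the cyclotomic factor. The quantity $\chi_{\mathrm{cyc}}^{k-1}$ lives in $\det\rho_f$, not in the trace-based $\alpha^2$, and there is no general identity relating $\alpha^2/\epsilon'$ to $p^{k-1}$ modulo squares. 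So the mechanism you describe for producing the $(-1)^k$ does not exist.

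The paper instead imports the explicit parity formula from \cite[Lemma~7.2]{MR3391026}, which gives
\[
(-1)^{f_v\cdot v(a_{p'}^2\epsilon(p')^{-1})}=(-1)^{e_vf_v(k-1)}\bigl(-\epsilon_p(-1)\bigr)^{2e_vf_v/(p-1)}
\]
when $(p,K|\Q_p)=1$, and the same with an additional factor $(-1)^{f_v\cdot v(a_{p''}^2\epsilon(p'')^{-1})}$ when $(p,K|\Q_p)=-1$. It then rewrites $-\epsilon_p(-1)=(-1)\epsilon_p(p'')^{(p-1)/2}$ and passes through the residue symbols $\bigl(\frac{\epsilon(p'')}{p}\bigr)^{f_v}=\bigl(\frac{(\epsilon(p''))'}{v}\bigr)$ to assemble the norm symbol $\bigl((-1)^k\epsilon(p'')^{-1},KF_v|F_v\bigr)$. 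In the case $(p,K|\Q_p)=-1$ the extra factor is killed using Lemma~\ref{imp}, which shows $a_{p''}^2$ is a unit times a square in $F_v^\times$, hence $v(a_{p''}^2\epsilon(p'')^{-1})$ is even. Your sketch does not account for either of these ingredients.
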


\begin{proof}
   Since $p \equiv 3 \pmod{4}$, we get 
   $(Nv-1)/2=(p^{f_v}-1)/2 \equiv f_v \pmod{2}$.
   Recall that  both $v(\pi^2)=e_v$
   and $(p-1)/2$ is odd that divides $e_v$ \cite[Lemma $4.1$]{MR3391026}.
   We have an equality of symbols:
   $
   	(\pi^2,a_{p^{''}}^2)_v
   	\overset{(\ref{localsymbol})}{=}
   	(-1)^{v(\pi^2)v(a_{p^{''}}^2)(p^{f_v}-1)/2} 
   	\Big(\frac{(a_{p^{''}}^2)'}{v} \Big)^{v(\pi^2)}
   	\Big(\frac{(\pi^2)'}{v} \Big)^{v(a_{p''}^2)} 
   	\overset{(\ref{residuesymbol})}{=}
   	(-1)^{f_v \cdot v(a_{p^{''}}^2)} 
   	\Big(\frac{(a_{p^{''}}^2)'}{v} \Big) 
   	\Big(\frac{-1}{v} \Big)^{v(a_{p^{''}}^2)} 
   	= 
   	\Big(\frac{(a_{p^{''}}^2)'}{v} \Big).
   $
   
   When $(p,K|\Q_p)=1$, using \cite[Lemma $7.2$]{MR3391026} we deduce that:
   \begin{eqnarray*}
   	  [X_v] &\sim& (-1)^{f_v \cdot v(a_{p'}^2 \epsilon(p')^{-1})} 
   	  \cdot (\pi^2,a_{p^{''}}^2)_v  \\
   	  &=& (-1)^{e_v f_v(k-1)}
   	  (-\epsilon_p(-1))^{2e_vf_v/(p-1)}
   	  \cdot \Big(\frac{(a_{p^{''}}^2)'}{v} \Big) \\
   	  &=& (-1)^{(k-1)f_v}(-\epsilon_p(-1))^{f_v} 
   	  \cdot \Big(\frac{(a_{p''}^2)'}{v} \Big) \\
   	  &=& (-1)^{kf_v}  (\epsilon_p(p'')^{(p-1)/2})^{f_v}  
   	  \cdot \Big(\frac{(a_{p^{''}}^2)'}{v} \Big) \\
   	  &=& (-1)^{kf_v} \Big( \frac{\epsilon(p'')^{(p-1)/2}}{p} 
   	  \Big)^{f_v} \cdot \Big(\frac{(a_{p^{''}}^2)'}{v} \Big) \\
   	  &=& (-1)^{kf_v} \Big(\frac{\epsilon(p^{''})}{p} \Big)^{f_v} 
   	  \cdot \Big( \frac{(a_{p^{''}}^2)'}{v} \Big) \\
   	  &=& (-1)^{kf_v} \Big(\frac{(\epsilon(p^{''}))'}{v} \Big) 
   	  \cdot \Big( \frac{(a_{p^{''}}^2)'}{v} \Big) 
   	  \sim \big((-1)^k \epsilon(p^{''})^{-1}, KF_v|F_v \big) 
   	  \cdot \big(a_{p^{''}}^2, KF_v|F_v \big)  \\
   	  &=&  \big((-1)^ka_{p^{''}}^2 \epsilon(p^{''})^{-1}, KF_v|F_v \big). 
   \end{eqnarray*}
   On the other hand for $(p,K|\Q_p)=-1$, using \cite[Lemma $7.2$]{MR3391026} we have:
   
   \begin{eqnarray*}
   	  [X_v] &\sim& (-1)^{f_v \cdot v(a_{p'}^2 \epsilon(p')^{-1})} 
   	  \cdot (\pi^2,a_{p^{''}}^2)_v  \\
   	  &=& (-1)^{e_v f_v(k-1)}
   	  (-\epsilon_p(-1))^{2e_vf_v/(p-1)}
   	  \cdot (-1)^{f_v \cdot v(a_{p^{''}}^2\epsilon(p^{''})^{-1})}
   	  \cdot \Big(\frac{(a_{p^{''}}^2)'}{v} \Big) \\
   	  &=& \big((-1)^ka_{p^{''}}^2 \epsilon(p^{''})^{-1}, KF_v|F_v \big)
   	  \cdot (-1)^{f_v \cdot v(a_{p^{''}}^2\epsilon(p^{''})^{-1})} \\
   	  &\overset{(\ref{imp})}{=}& 
   	  \big((-1)^ka_{p^{''}}^2 \epsilon(p^{''})^{-1}, KF_v|F_v \big).
   \end{eqnarray*}
\end{proof}

 \begin{remark} \label{extrasymboltrivial}
 	In general, when $KF_v|F_v$ is ramified quadratic, we have
 	$[X_v] \sim (-1)^{f_v \cdot v(a_{p'}^2\epsilon(p')^{-1})} 
 	\cdot (\pi^2,d)_v$ with $d$ as in Lemma~\ref{d}.
 	If $f_v$ is even, then by (\ref{localsymbol}), the symbol $(\pi^2,d)_v=1$
 	as $(p^{f_v}-1)/2 \equiv f_v \pmod{2}$.
 	Hence, $X_v$ is unramified.
 \end{remark}
\section{Ramifications for primes lying above dihedral supercuspidal prime  $p=2$}
\label{p=2}
 For $i \in I_K$, let $\bar{i}$ denote the projection to the inertia
 subgroup $I(L|K)$[cf. Section~\ref{chi}]. The following lemma will give information 
 about $\alpha$ on the inertia group $I_K$.
 
 \begin{lemma} \label{lem8}
 	Let $p=2$ be a dihedral supercuspidal prime for $f$. 
 	For all $i \in I_K$, we have 
 	\begin{eqnarray*}
 		\alpha(i) \equiv
 		\begin{cases}
 			1 \,\, \text{mod} \,\, F_v^\times, & \quad \text{if} \,\, 
 			i \in I_T(K), \\
 			\zeta_{2^r} \,\, \text{mod} \,\, F_v^\times, & \quad \text{if} \,\,
 			\bar{i} \,\, \text{is an odd power of} \,\, \ga_1,\\
 			\zeta_{2^s}+\zeta_{2^s}^{-1} \,\, \text{mod} 
 			\,\, F_v^\times, & \quad \text{if}
 			\,\, \bar{i} \,\, \text{is an odd power of} \,\, \ga_2 \text{ and }\zeta_{2^s}+ \zeta_{2^s}^{-1} \neq 0.\\
 		\end{cases}
 	\end{eqnarray*}
 	 Furthermore, $\alpha(i^{2k}) \equiv 1 \,\, \text{mod} \,\, F_v^\times$ for all  $k \in \Z$.
 \end{lemma}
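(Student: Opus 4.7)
The plan is to exploit part $(2)$ of Proposition~\ref{prop1} together with the explicit description of $\chi|_{I_K}$ recorded in Section~\ref{chi}. Because $\rho_f|_{G_2} \sim \Ind_{G_K}^{G_2}\chi$, the character formula for an induced representation gives $\mathrm{Tr}(\rho_f(g)) = \chi(g) + \chi^\sigma(g)$ for every $g \in G_K$, where $\sigma$ (resp.\ $\iota$) lifts the nontrivial element of $\Gal(K|\Q_2)$ in the unramified (resp.\ ramified) case. Whenever this trace is non-zero, we therefore have $\alpha(i) \equiv \chi(i) + \chi^\sigma(i) \pmod{F_v^\times}$, and the three cases of the lemma reduce to evaluating the characters $\omega_2^l, \omega, \chi_1, \chi_2$ on the three classes of inertia elements using the actions~\eqref{action1} and~\eqref{action2}.

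For $i \in I_T(K)$ both $\chi_1$ and $\chi_2$ are trivial, so $\chi(i) = \omega_2^l(i)$ (unramified) or $\chi(i) = 1$ (ramified, where $\omega$ is trivial); the trace is then $\mathrm{Tr}_{K|\Q_2}(\omega_2^l(i)) \in \Q_2^\times$ or $2$, respectively, which yields $\alpha(i) \equiv 1 \pmod{F_v^\times}$. For $\bar i = \ga_1^{2k+1}$, the tame piece is trivial and $\chi_2(\ga_1) = 1$, so $\chi(i) = \zeta_{2^r}^{2k+1}$; using $\chi_1^\sigma = \chi_1$, the trace equals $2\zeta_{2^r}^{2k+1}$, which modulo $F_v^\times$ is a primitive $2^r$-th root of unity we record as $\zeta_{2^r}$. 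For $\bar i = \ga_2^{2k+1}$ one has $\chi(i) = \zeta_{2^s}^{2k+1}$ and, by $\chi_2^\sigma = \chi_2^{-1}$, $\chi^\sigma(i) = \zeta_{2^s}^{-(2k+1)}$, so the trace is $\zeta_{2^s}^{2k+1} + \zeta_{2^s}^{-(2k+1)}$; this is non-zero exactly under the stated hypothesis $\zeta_{2^s} + \zeta_{2^s}^{-1} \neq 0$, and the Chebyshev identity
\[
\zeta^{2k+1} + \zeta^{-(2k+1)} = (\zeta+\zeta^{-1})\, P_k\bigl((\zeta+\zeta^{-1})^2\bigr), \qquad P_k \in \Z[X],
\]
expresses it as $(\zeta_{2^s} + \zeta_{2^s}^{-1})$ times an integer polynomial in $(\zeta_{2^s}+\zeta_{2^s}^{-1})^2$.

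The ``furthermore'' clause about $\alpha(i^{2k})$ then falls out of the $2$-cocycle relation $\alpha(i^{2k}) \equiv \alpha(i^k)^2 \pmod{F_v^\times}$ combined with Proposition~\ref{prop1}(1), giving $\alpha(i^{2k}) \equiv \epsilon(i^k) \pmod{F_v^\times}$; one then verifies $\epsilon(i^k) \in F_v^\times$ by unwinding the idelic description~\eqref{idelic} of $\epsilon|_{G_2}$ together with the relation $\chi_\ga^2 = \epsilon^{\ga - 1}$. The main obstacle is to justify that the nominally cyclotomic trace values $2\zeta_{2^r}^{2k+1}$ and $\zeta_{2^s}^{2k+1}+\zeta_{2^s}^{-(2k+1)}$ genuinely collapse, modulo $F_v^\times$, to the canonical representatives $\zeta_{2^r}$ and $\zeta_{2^s}+\zeta_{2^s}^{-1}$ independently of $k$; this amounts to showing $\zeta_{2^r}^{2k}$ and the Chebyshev cofactor $P_k((\zeta_{2^s}+\zeta_{2^s}^{-1})^2)$ lie in $F_v^\times$, which is precisely what the final assertion of the lemma encodes and is the reason all three parts of the statement must be proved in tandem.
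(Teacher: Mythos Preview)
Your trace computations are correct and match the paper's approach: part~(2) of Proposition~\ref{prop1} together with the explicit inertia types~\eqref{character1}, \eqref{character2} and the $\sigma$-actions~\eqref{action1}, \eqref{action2} indeed yield the displayed values on a $\gamma_1$-element and a $\gamma_2$-element. The Chebyshev identity is a nice touch that the paper does not spell out.

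The gap is precisely at your ``main obstacle'', and your proposed resolution is circular. You want to deduce the ``furthermore'' clause from $\alpha(i^{2k}) \equiv \alpha(i^k)^2 \equiv \epsilon(i^k) \pmod{F_v^\times}$ and then argue that $\epsilon(i^k) \in F_v^\times$ via $\chi_\gamma^2 = \epsilon^{\gamma-1}$. But that relation gives only $\epsilon(i^k)^{\gamma-1} = \chi_\gamma(i^k)^2 = (\alpha(i^k)^{\gamma-1})^2$, i.e.\ $\epsilon(i^k)/\alpha(i^k)^2 \in F^\times$, which is Proposition~\ref{prop1}(1) again; so $\epsilon(i^k) \in F_v^\times$ is \emph{equivalent} to $\alpha(i^{2k}) \in F_v^\times$, not an independent input. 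The idelic formula~\eqref{idelic} does not help either, since it only describes $\epsilon$ on $\Q_2^\times$ and says nothing a priori about whether the resulting root of unity lies in $F$.

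The paper breaks the circle with one extra observation you are missing: $F$ is \emph{totally real}. From $\alpha(j_2) \equiv \zeta_{2^s}+\zeta_{2^s}^{-1} \pmod{F^\times}$ and Proposition~\ref{prop1}(1) one gets $\epsilon(j_2) = a(\zeta_{2^s}+\zeta_{2^s}^{-1})^2$ with $a \in F^\times$. Both factors on the right are totally real, so the root of unity $\epsilon(j_2)$ is real, hence $\epsilon(j_2) \in \{\pm 1\} \subset F^\times$. This forces $(\zeta_{2^s}+\zeta_{2^s}^{-1})^2 \in F^\times$, whence $\zeta_{2^{s-1}}+\zeta_{2^{s-1}}^{-1} \in F^\times$, and since $r<s$ also $\zeta_{2^r}+\zeta_{2^r}^{-1} \in F^\times$. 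A short degree comparison ($[F(\zeta_{2^r}):F]=2$ and $[F(\zeta_{2^r}):F(\zeta_{2^{r-1}})]=2$) then yields $\zeta_{2^{r-1}} \in F^\times$. With $(\zeta_{2^s}+\zeta_{2^s}^{-1})^2$ and $\zeta_{2^{r-1}}$ both in $F_v^\times$, your Chebyshev cofactor and the $\zeta_{2^r}^{2k}$ ambiguity disappear, and the ``furthermore'' clause follows immediately.
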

 
 \begin{proof}
    Consider the case  $K$ is unramified.
	By the part $(2)$ of Proposition~\ref{prop1} and 
	using (\ref{character1}), (\ref{action1}) we have
    $
	  \alpha(i) \equiv \omega_2^l(\bar{i})+\omega_2^{2l}(\bar{i}) \,\,
      \text{mod} \,\, F_v^\times $
    for all $i \in I_T(K)$. Since $\omega_2$ takes values in the third roots of 
	unity, we have $\alpha(i) 
	\equiv 1 \,\, \text{mod} \,\, F_v^\times$.

	In the ramified case, we obtain
	$\alpha(i) \equiv \omega(\bar{i})+\omega(\bar{i}) 
	\equiv 1 \,\, \text{mod} \,\, F_v^\times $.
	Let $j_1$ and $j_2$ be a $\ga_1$-element and a $\ga_2$-element respectively
	[cf. Definition~\ref{ga_1ga_2elements}].
	Then $\alpha(j_1) \equiv \chi_1(\gamma_1)+\chi_1^\sigma
	(\gamma_1) \equiv \zeta_{2^r} \,\, \text{mod} \,\, F_v^\times$ and 
	$\alpha(j_2) \equiv \chi_2(\gamma_2) + \chi_2^\sigma(\gamma_2)
	\equiv \zeta_{2^s}+\zeta_{2^s}^{-1} \,\, \text{mod} \,\, F_v^\times.$

	Since $\alpha(j_2)  \equiv \zeta_{2^s}+\zeta_{2^s}^{-1} \,\, \text{mod}
	\,\, F^\times$, using [Proposition~\ref{prop1}, property $(1)$] we have that
	 $\epsilon(j_2)= a(\zeta_{2^s}+\zeta_{2^s}^{-1})^2$ for some $a \in 
	F^\times$. Recall that $\epsilon(j_2)$ is a root of unity. 
	Thus, we obtain $\epsilon(j_2)\in \{\pm 1\}$ and so $(\zeta_{2^s}+\zeta_{2^s}^{-1})^2 \in
	F^\times$.  In turn, this  implies $\zeta_{2^{s-1}}+\zeta_{2^{s-1}}^{-1} \in 
	F^\times$. As $r<s$, we must have $\zeta_{2^r}+\zeta_{2^r}^{-1} \in F^\times$.
	Note that the field $F(\zeta_{2^r})$ inside $E$ has degree $2$ over both the fields
	$F=F(\zeta_{2^r}+\zeta_{2^r}^{-1})$ and $F(\zeta_{2^{r-1}})$. Since
	$F \subseteq F(\zeta_{2^{r-1}})$, we conclude that $\zeta_{2^{r-1}} \in F_v^\times$. 
	We get the desired result using the homomorphism $\tilde{\alpha}$.
	The last statement follows from the observation $\zeta_{2^{r-1}}, (\zeta_{2^s}+\zeta_{2^s}^{-1})^2
	\in F_v^\times$. 
\end{proof}
 
 \begin{lemma} \label{lem7}
 	If $p=2$ is a dihedral supercuspidal prime for $f$,
 	then $\epsilon$ is $F^\times$-valued 
 	on $I_W(K)$.
 \end{lemma}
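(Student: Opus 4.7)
The plan is to reduce the claim to a statement about $\alpha(i)^2$ via part $(1)$ of Proposition~\ref{prop1}, and then evaluate using Lemma~\ref{lem8} after decomposing $i$ through the wild generators $\gamma_1,\gamma_2$ of $I_W(L|K)$. Concretely: since $\alpha^2(g)\equiv \epsilon(g)\pmod{F^\times}$, it is enough to prove $\alpha(i)^2\in F^\times$ for every $i\in I_W(K)$.

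Fix a $\gamma_1$-element $j_1$ and a $\gamma_2$-element $j_2$ in $I_W(K)$ (cf.\ Definition~\ref{ga_1ga_2elements}). For $i\in I_W(K)$ with $\bar i=\gamma_1^a\gamma_2^b\in\Gamma$, set $k:=j_1^{-a}j_2^{-b}i$, so $\bar k=1$, i.e.\ $k\in I_L$. Since $\rho_f$ is trivial on $I_L$ (by the choice of $L$ in Section~\ref{chi}), $\mathrm{Tr}(\rho_f(k))=2\neq 0$, and part $(2)$ of Proposition~\ref{prop1} gives $\alpha(k)\in F^\times$. Because $\tilde\alpha$ is a homomorphism, this yields
\[
\alpha(i)^2 \equiv \alpha(j_1)^{2a}\cdot \alpha(j_2)^{2b}\pmod{F^\times}.
\]

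It remains to show each factor on the right lies in $F^\times$. By Lemma~\ref{lem8}, $\alpha(j_1)\equiv \zeta_{2^r}\pmod{F^\times}$, so $\alpha(j_1)^2\equiv \zeta_{2^{r-1}}\pmod{F^\times}$; the proof of Lemma~\ref{lem8} establishes $\zeta_{2^{r-1}}\in F^\times$. For $\alpha(j_2)^2$, when $\zeta_{2^s}+\zeta_{2^s}^{-1}\neq 0$, Lemma~\ref{lem8} gives $\alpha(j_2)\equiv \zeta_{2^s}+\zeta_{2^s}^{-1}\pmod{F^\times}$, and the same proof shows $(\zeta_{2^s}+\zeta_{2^s}^{-1})^2\in F^\times$.

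The only genuine obstacle is the exceptional case $\zeta_{2^s}+\zeta_{2^s}^{-1}=0$, i.e.\ $s=2$, in which Lemma~\ref{lem8} does not record a value for $\alpha(j_2)$ because the trace of $\rho_f(j_2)$ vanishes. This is handled by passing to the square: using $\chi_2^\sigma=\chi_2^{-1}$ and $\chi_2(\gamma_2)=\pm i$, one computes
\[
\mathrm{Tr}(\rho_f(j_2^2))=\chi_2(\gamma_2)^2+\chi_2(\gamma_2)^{-2}=-2\neq 0,
\]
so part $(2)$ of Proposition~\ref{prop1} applies to $j_2^2$ and gives $\alpha(j_2^2)\equiv -2\pmod{F^\times}$. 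Combined with $\alpha(j_2^2)\equiv \alpha(j_2)^2\pmod{F^\times}$ from the homomorphism property of $\tilde\alpha$, we conclude $\alpha(j_2)^2\in F^\times$ also in this case. Putting the factors together, $\alpha(i)^2\in F^\times$, hence $\epsilon(i)\in F^\times$, as required.
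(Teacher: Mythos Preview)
Your proof is correct, but it takes a different route from the paper's. The paper argues via determinants: from $\rho_{f^\gamma}\sim\rho_f\otimes\chi_\gamma$ one gets $(\chi\chi^\sigma)^\gamma=\chi\chi^\sigma\chi_\gamma^2$, and since $\chi_\gamma^2=\epsilon^{\gamma-1}$ this forces $\chi\chi^\sigma/\epsilon\in F^\times$. On $I_W(K)$ the identity $\chi_2^\sigma=\chi_2^{-1}$ makes the $\chi_2$-contribution cancel \emph{automatically}, leaving $\chi_1^2/\epsilon\in F^\times$; the conclusion then follows from $\chi_1^2\in F^\times$ (i.e.\ $\zeta_{2^{r-1}}\in F$). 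Your approach instead passes through $\alpha$ and traces: you reduce to $\alpha(i)^2\in F^\times$ and verify it generator by generator using Lemma~\ref{lem8}. The price is the extra case-split at $s=2$, where $\mathrm{Tr}(\rho_f(j_2))$ vanishes and you must pass to $j_2^2$; the determinant argument never sees this obstruction because $\chi_2\chi_2^\sigma=1$ identically. Both arguments ultimately rest on $\zeta_{2^{r-1}}\in F$, so neither is strictly more elementary, but the paper's is shorter and uniform in $s$.
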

 
 \begin{proof}
 	For an extra twist $(\gamma, \chi_\gamma)$,
 	we have $\rho_{f^\gamma} \sim \rho_f \otimes \chi_\gamma $. 
 	By restricting the representation to $G_2$, we obtain:
 	$$\begin{pmatrix}
 	\chi^\gamma & 0  \\
 	0 & (\chi^\sigma)^\gamma
 	\end{pmatrix}
 	\sim
 	\begin{pmatrix}
 	\chi \chi_\gamma & 0  \\
 	0 & \chi^\sigma \chi_\gamma
 	\end{pmatrix}.
 	$$
 	Equating deteminants on both sides, we get  $(\chi \chi^\sigma)^\gamma = 
 	\chi \chi^\sigma \chi_\gamma^2$ for all $\gamma \in \Gamma$.
 	Since $\chi_\gamma^2=\epsilon^{\gamma-1}$, 
 	the quantity $\frac{\chi \chi^\sigma}{\epsilon}
 	\in F^\times$ and so $\frac{\chi_1^2}{\epsilon} \in F^\times$
 	on the wild inertia group $I_W(K)$. We 
 	get the lemma as $\chi_1^2 \in F^\times$.
 \end{proof}
 
 \begin{lemma}
 	Assume that $K \subseteq F_v$ with $\zeta_{2^s}+\zeta_{2^s}^{-1}=0$. Let $j_2$ be a $\ga_2$-element. Then $\chi_\ga$ becomes unramified (or ramified) depending
 	on $\chi_\ga(j_2)=1$ (or $-1$), for all $\ga \in \Ga$.
 \end{lemma}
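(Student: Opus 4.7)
The plan is to compute $\chi_\gamma$ explicitly on the inertia subgroup $I_K$ by exploiting the extra twist relation $f^\gamma = f\otimes\chi_\gamma$, and then to identify the triviality of $\chi_\gamma|_{I_K}$ with the single condition $\chi_\gamma(j_2)=1$.

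First, I would record the numerical consequences of the hypothesis. The equation $\zeta_{2^s}+\zeta_{2^s}^{-1}=0$ forces $s=2$, so $\zeta_{2^s}=\zeta_4$ is a primitive fourth root of unity, and combined with $r<s$ from Section~\ref{chi} we obtain $r\leq 1$, i.e.\ $\zeta_{2^r}\in\{\pm 1\}\subseteq\Q^\times$.

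Next, applying Brauer--Nesbitt to $\rho_{f^\gamma}\sim\rho_f\otimes\chi_\gamma$ yields $\rho_f^\gamma\sim\rho_f\otimes\chi_\gamma$. Restricting to $G_K$ and using $\rho_f|_{G_K}\sim\chi\oplus\chi^\sigma$, we get $\chi^\gamma\oplus(\chi^\sigma)^\gamma\sim\chi\chi_\gamma\oplus\chi^\sigma\chi_\gamma$ on $G_K$, whence either $\chi_\gamma=\chi^\gamma\chi^{-1}$ (Case~1) or $\chi_\gamma=\chi^\gamma(\chi^\sigma)^{-1}$ (Case~2) as characters of $G_K$.

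I would then compute $\chi_\gamma$ on each piece of $I_K=I_T(K)\cdot I_W(K)$. On $I_T(K)$, both $\chi$ and $\chi^\sigma$ restrict to $\omega$, which is trivial in the ramified case, so $\chi_\gamma|_{I_T(K)}=1$. On a $\gamma_1$-element $j_1$, $\chi(j_1)=\chi^\sigma(j_1)=\zeta_{2^r}$ lies in $\Q^\times$, hence is fixed by every $\gamma\in\Gamma$, giving $\chi_\gamma(j_1)=1$. On the $\gamma_2$-element $j_2$, $\chi(j_2)=\zeta_4$ and $\chi^\sigma(j_2)=\zeta_4^{-1}$; since $\gamma(\zeta_4)\in\{\pm\zeta_4\}$, a direct computation yields $\chi_\gamma(j_2)\in\{\pm 1\}$ in both Case~1 and Case~2.

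Combining these three computations, $\chi_\gamma|_{I_K}$ is trivial if and only if $\chi_\gamma(j_2)=1$. Since $K\subseteq F_v$ with $K/\Q_2$ quadratic, $I_{F_v}\subseteq I_K$, so this equivalence governs the ramification of $\chi_\gamma$ at the prime $v$ above $2$. The main technical obstacle is the Case~1/Case~2 dichotomy: one must verify that the conclusion holds uniformly in both branches, which ultimately amounts to tracking the action of $\gamma\in\Gamma$ on $\zeta_4\in E$ together with the consistency check $\chi_\gamma^2=\epsilon^{\gamma-1}$ and $\epsilon|_{I_K}=\chi\chi^\sigma$.
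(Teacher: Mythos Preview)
Your approach differs substantially from the paper's. The paper never splits into the Case~1/Case~2 dichotomy $\chi_\gamma=\chi^\gamma\chi^{-1}$ versus $\chi_\gamma=\chi^\gamma(\chi^\sigma)^{-1}$. Instead it works entirely through the global relation $\chi_\gamma(g)=\alpha(g)^{\gamma-1}$ from~(\ref{1}) together with $\chi_\gamma^2=\epsilon^{\gamma-1}$. For $j_2$, Lemma~\ref{lem7} gives $\epsilon(j_2)\in F^\times$, hence $\chi_\gamma(j_2)^2=\epsilon(j_2)^{\gamma-1}=1$ and $\chi_\gamma(j_2)=\pm 1$ with no case analysis. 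For tame elements and $\gamma_1$-elements, Lemma~\ref{lem8} (together with Proposition~\ref{prop1}(2)) shows that $\alpha(i)$ lies in $F^\times$, so $\chi_\gamma(i)=\alpha(i)^{\gamma-1}=1$ immediately. The paper's route is shorter and stays inside $E^\times$ where $\Gamma$ acts cleanly.

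Your direct computation is a reasonable alternative, but two points are not fully handled. First, your sentence on $I_T(K)$ only covers the ramified case; when $K/\Q_2$ is unramified, $\chi$ and $\chi^\sigma$ restrict to $\omega_2^l$ and $\omega_2^{2l}$ (not to the trivial $\omega$), and you still owe an argument that $\chi_\gamma$ vanishes there --- for instance by observing that $\chi_\gamma|_{I_2}$ factors through the pro-$2$ group $\Z_2^\times$ and hence is trivial on anything of order~$3$, or by invoking the trace computation $\omega_2^l(\bar i)+\omega_2^{2l}(\bar i)\in\{-1,2\}\subset\Q$. Second, your dichotomy asks $\gamma\in\Aut(E)$ to act on the individual values $\chi(j_2)=\zeta_4$ and $\chi(i)\in\mu_3$, which a priori live in $E_\lambda$ rather than in $E$; since $\gamma$ permutes the primes $\lambda$, making sense of ``$\chi^\gamma$'' as a character into a fixed target requires some care that you flag but do not resolve. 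The paper's approach through $\alpha$ avoids both issues, which is what it buys over the explicit route.
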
		
 
 \begin{proof}
 	By assumption, we have $s=2$.
 	Since $\epsilon$ is $F^\times$-valued on $I_W(K)$ by Lemma~\ref{lem7},
 	we have $\chi_\ga^2(j_2)=\epsilon(j_2)^{\ga-1}=1$ for all $\ga \in \Ga$.
 	Thus, we obtain $\chi_\ga(j_2)=\pm 1 \,\, \forall \,\, \ga \in \Ga$.
 	Depending on the value, the Dirichlet character $\chi_\gamma$ is unramified or ramified. 
 	For elements $i \in I_T(F_v)$, we have  $\chi_\gamma(i)=1$ as $\alpha(i) \in 
 	F_v^\times$. This is also the case for the elements of $\Gamma_1$, one part 
 	of the wild inertia group, as $r=0,1 (r<s)$ [cf. Lemma~\ref{lem8}].
 \end{proof}
 Note that the above is true for any $\ga_2$-element. Since $f$ is  non-CM, 
 we can and do choose an auxiliary prime $p^\dagger$ stated in the introduction imitating a similar construction of \cite[Section $6.2.3$]{MR3096563}.
 Since $\epsilon^{-1}$ is an extra twist, $\epsilon(p^\dagger)=-1$  
 and we obtain $a_{p^\dagger}^2=-a_{p^\dagger}^2
 \epsilon(p^\dagger)^{-1} \in F^\times$.
 We deduce that  
 $\alpha(j_2) \equiv \alpha(\Frob_{p^\dagger}) 
 \equiv a_{p^\dagger} \,\, \text{mod} \,\, F_v^\times$.  
  Consider the Frobenius element $g_\pi$ in $G_K$ and two fields 
 $F_v^{'}=F_v(b, \zeta_{2^s}+\zeta_{2^s}^{-1})$
 and $F_v^{''}=F_v(b, \zeta_{2^r})$ as in Section~\ref{Statement}.
 \begin{lemma}
  \label{g1}
 	Let $p=2$ be a dihedral supercuspidal prime for $f$ and  $\zeta_{2^s}+ \zeta_{2^s}^{-1} \neq 0$.
	\begin{itemize}
	\item
	If $g \in G_K$ and $\alpha(g) \notin (F_v^{'})^\times$, then 
 	$\alpha(g) \equiv \zeta_{2^r}$ mod $(F_v^{'})^\times$.
	\item 
	If $\alpha(g) \notin (F_v^{''})^\times$, then we have
 	$\alpha(g) \equiv (\zeta_{2^s}+\zeta_{2^s}^{-1})$ 
 	mod $(F_v^{''})^\times$. 
	 	\end{itemize}
 \end{lemma}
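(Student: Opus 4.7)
The plan is to write an arbitrary $g \in G_K$ in the form $g = g_\pi^n \, i$ with $n \in \Z$ and $i \in I_K$ (using that $G_K/I_K$ is topologically generated by the Frobenius $g_\pi$), and then exploit the homomorphism property of $\tilde\alpha$ together with the defining relation $\alpha(g_\pi) \equiv b \pmod{F_v^\times}$. This gives $\alpha(g) \equiv b^n \alpha(i) \pmod{F_v^\times}$. Since $b \in F_v^{'} \cap F_v^{''}$ by construction, $b^n$ lies in both $(F_v^{'})^\times$ and $(F_v^{''})^\times$, so the question of whether $\alpha(g)$ lies in these fields reduces immediately to the same question for $\alpha(i)$.

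Next I would decompose $i \in I_K$ into its tame part, its $\gamma_1$-part and its $\gamma_2$-part using the structure of the inertia subgroup $I(L|K)$ recalled in Section~\ref{chi}, and apply Lemma~\ref{lem8} componentwise via the multiplicativity of $\tilde\alpha$. The outcome is that $\alpha(i)$, modulo $F_v^\times$, always lies in the finite set
\[
  \bigl\{\,1,\ \zeta_{2^r},\ \zeta_{2^s}+\zeta_{2^s}^{-1},\ \zeta_{2^r}(\zeta_{2^s}+\zeta_{2^s}^{-1})\,\bigr\},
\]
governed by the parities of the $\gamma_1$- and $\gamma_2$-exponents of the image $\bar i \in I(L|K)$ (the last clause of Lemma~\ref{lem8} ensures that doubled exponents contribute trivially).

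For the first assertion, the field $F_v^{'}=F_v(b,\zeta_{2^s}+\zeta_{2^s}^{-1})$ clearly contains the first and third elements of the above set. Hence if $\alpha(g) \notin (F_v^{'})^\times$, then $\alpha(i)$ must be congruent mod $F_v^\times$ to one of the remaining two elements, and both of these reduce to $\zeta_{2^r}$ modulo $(F_v^{'})^\times$ since $\zeta_{2^s}+\zeta_{2^s}^{-1} \in (F_v^{'})^\times$. Multiplying back by $b^n \in (F_v^{'})^\times$ yields $\alpha(g) \equiv \zeta_{2^r} \pmod{(F_v^{'})^\times}$. The second assertion is proved symmetrically using $F_v^{''}=F_v(b,\zeta_{2^r})$, with the roles of $\zeta_{2^r}$ and $\zeta_{2^s}+\zeta_{2^s}^{-1}$ interchanged.

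The main point requiring care is the justification of the four-element classification: one must verify that the image $\bar i \in I(L|K)$ has a well-defined parity of $\gamma_1$- and $\gamma_2$-exponents for the purposes of computing $\alpha(i)$ mod $F_v^\times$, which follows from the direct-product structure $\langle\gamma_1\rangle\times\langle\gamma_2\rangle$ of the wild inertia together with the fact (from Lemma~\ref{lem8}) that $\alpha$ vanishes modulo $F_v^\times$ on the tame part and on even powers of either $\gamma_1$- or $\gamma_2$-elements. Everything else is a routine coset computation in $F_v'$ and $F_v''$.
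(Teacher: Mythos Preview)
Your proof is correct and follows essentially the same approach as the paper: the paper's proof simply says to write $g=g_\pi^n i$ and invoke the homomorphism $\tilde\alpha$ together with Lemma~\ref{lem8}, while you have carefully spelled out the details of that invocation (the four-element classification of $\alpha(i)$ mod $F_v^\times$ and the coset argument in $F_v'$ and $F_v''$). There is no genuine difference in strategy.
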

 
 \begin{proof}
 	Recall that every element $g \in G_K$ can be written as $g_\pi^ni$,
 	for some $n \in \Z$ and $i \in I_K$. Using the homomorphism
 	$\tilde{\alpha}$ and Lemma~\ref{lem8}, we get the result.
 \end{proof}
 \begin{lemma}
 	\label{g2}
 	Let $p=2$ be a dihedral supercuspidal prime for $f$ and  $\zeta_{2^s}+ \zeta_{2^s}^{-1} = 0$.
 	If $g \in G_K$ and $\alpha(g) \notin (F_v^{''})^\times$, then we have
 	$\alpha(g) \equiv a_{p^\dagger}$ mod $(F_v^{''})^\times$. 
 \end{lemma}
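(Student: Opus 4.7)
The plan is to mimic the argument of Lemma~\ref{g1}, replacing the now-degenerate quantity $\zeta_{2^s}+\zeta_{2^s}^{-1}$ by the auxiliary value $a_{p^\dagger}$ supplied by the preceding paragraph. First I would write any $g \in G_K$ as $g = g_\pi^n \cdot i$ with $n \in \Z$ and $i \in I_K$ via local class field theory. Since $\alpha(g_\pi) \equiv b$ mod $F_v^\times$ and $b \in F_v^{''} = F_v(b,\zeta_{2^r})$ by definition, the factor $\alpha(g_\pi)^n$ contributes nothing modulo $(F_v^{''})^\times$. Using the homomorphism $\tilde{\alpha}$, the problem reduces to analyzing $\alpha(i) \pmod{(F_v^{''})^\times}$ for $i \in I_K$.

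Next I would decompose $i = i_T \cdot i_W$ into its tame and wild parts. The tame factor contributes $\alpha(i_T) \equiv 1$ by Lemma~\ref{lem8}, so only the wild part is relevant. Writing the projection of $i_W$ to $I_W(L|K) \cong \langle \gamma_1 \rangle \times \langle \gamma_2 \rangle$ as $\gamma_1^a \gamma_2^b$, a second application of the homomorphism $\tilde{\alpha}$ gives
\[
\alpha(i_W) \equiv \alpha(j_1)^a \, \alpha(j_2)^b \pmod{F_v^\times},
\]
where $j_1, j_2$ denote a $\gamma_1$-element and a $\gamma_2$-element respectively. By Lemma~\ref{lem8}, $\alpha(j_1) \equiv \zeta_{2^r}$, which already lies in $(F_v^{''})^\times$, so $\alpha(j_1)^a$ may be discarded.

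The key input is that the hypothesis $\zeta_{2^s}+\zeta_{2^s}^{-1} = 0$ forces $s=2$, which is precisely the regime in which Lemma~\ref{lem8} is silent about $\alpha(j_2)$. Here I invoke the discussion just before Lemma~\ref{g1}: the auxiliary prime $p^\dagger$ satisfies $\chi_\gamma(p^\dagger) = \chi_\gamma(j_2)$ for every $\gamma \in \Gamma$, so $\alpha(j_2) \equiv \alpha(\mathrm{Frob}_{p^\dagger}) \equiv a_{p^\dagger} \pmod{F_v^\times}$, and moreover $a_{p^\dagger}^2 \in F^\times \subseteq (F_v^{''})^\times$. Consequently $a_{p^\dagger}^b \equiv a_{p^\dagger}$ or $1$ modulo $(F_v^{''})^\times$ depending on the parity of $b$.

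Putting these ingredients together, $\alpha(g) \equiv a_{p^\dagger}^b \pmod{(F_v^{''})^\times}$, which lies in $(F_v^{''})^\times$ exactly when $b$ is even. The hypothesis $\alpha(g) \notin (F_v^{''})^\times$ therefore forces $b$ to be odd, and in that case $\alpha(g) \equiv a_{p^\dagger} \pmod{(F_v^{''})^\times}$, as claimed. The only real obstacle is bookkeeping: one must verify that the decomposition of $i$ into tame and wild components is well-defined modulo $(F_v^{''})^\times$, which follows from the last clause of Lemma~\ref{lem8} together with the non-CM choice of $p^\dagger$ ensuring that $a_{p^\dagger}$ represents a well-defined class.
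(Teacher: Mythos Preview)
Your proposal is correct and follows essentially the same route as the paper: write $g=g_\pi^n i$, use $\tilde\alpha$ and Lemma~\ref{lem8} to reduce to the $\gamma_2$-contribution, and then invoke $\alpha(j_2)\equiv a_{p^\dagger}$ with $a_{p^\dagger}^2\in F^\times$. The only difference is that where you dispose of the $\gamma_1$-part by noting $\zeta_{2^r}\in F_v''$ by the very definition of $F_v''$, the paper observes more sharply that $r<s=2$ forces $r\in\{0,1\}$, hence $\zeta_{2^r}=\pm1\in F_v^\times$ already; both arguments suffice.
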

 
 \begin{proof}
 	Since $r \in \{0,1\}$ (as $r<s=2$), we must have $\alpha(i) \in F_v^\times$,
 	for all $i \in I_K$ with $\bar{i} \in <\ga_1>$ [cf. Lemma~\ref{lem8}].
 	Then as in lemma above, we get the result.
 \end{proof}
 \subsection{Error terms for $p=2$}
 \label{errorterms}
 First assume that $K \subseteq F_v$ with $\zeta_{2^s}+\zeta_{2^s}^{-1} \neq 0$.
 Define two functions on $G_v$ by
 \begin{gather} \label{twofunctions2}
 f_1(g)=
 \left\{\begin{array}{lll}
 1 & \quad \text{if} \,\, \alpha(g) \in (F_v^{'})^\times \\              
 \zeta_{2^r} & \quad \text{if} \,\, \alpha(g) \notin (F_v^{'})^\times
 \end{array}\right.
 \quad \text{and}  \quad	f_2(g)=
 \left\{\begin{array}{lll}
 1 & \quad 
 \text{if} \,\, \alpha(g) \in (F_v^{''})^\times \\             
 \zeta_{2^s}+\zeta_{2^s}^{-1} & \quad 
 \text{if} \,\, \alpha(g) \notin (F_v^{''})^\times.
 \end{array}\right.
 \end{gather}
 Recall that we defined two quadratic characters $\psi_1,\psi_2$ on $G_v$ and $t_1, t_2 \in F_v^\times$  in Section~\ref{Statement} . 
 We replace the conditions that define $f_1$ and $f_2$
 by the quadratic characters $\psi_1$ and $\psi_2$ respectively defined in that section.
 
 The functions $f_1$ and $f_2$ will induce $2$-cocycles  $c_{f_1}$ and $c_{f_2}$ in $\Br(F_v)$.
 To compute $\mathrm{inv}_v(c_{f_1})$, consider the non-trivial element  
 $\sigma$ of the Galois group $\Gal(F_v(\sqrt{t_1})|F_v)$. 
 The cocycle table of the $2$-cocycle $c_{f_1}$ is given by
 \begin{center}
 	\begin{tabular}{ |c|c|c|c| } 
 		\hline
 		& $1$ & $\sigma$ \\
 		\hline
 		$1$ & $1$ & $1$ \\
 		\hline
 		$\sigma$ & $1$ & $\zeta_{2^{r-1}}$ \\ 
 		\hline
 	\end{tabular}
 \end{center}
  which gives the symbol $(t_1, \zeta_{2^{r-1}})_v$.  Similarly, the cocycle
 table of $c_{f_2}$ gives the symbol $(t_2,(\zeta_{2^s}+
 \zeta_{2^s}^{-1})^2)_v$.
 Define two integers $n_{1,v}, n_{2,v}$ mod $2$ by
 $(-1)^{n_{1,v}}=(t_1, \zeta_{2^{r-1}})_v$ and
 $(-1)^{n_{2,v}}=(t_2,(\zeta_{2^s}+
 \zeta_{2^s}^{-1})^2)_v$.
 Let us consider an integer $n_v$ mod $2$ defined by
 $(-1)^{n_v}=(-1)^{n_{1,v} + n_{2,v}}=
 (t_1, \zeta_{2^{r-1}})_v \cdot (t_2,(\zeta_{2^s}+
 \zeta_{2^s}^{-1})^2)_v$.
	
 We now assume the case $K \nsubseteq F_v$ 
 with $\zeta_{2^s}+\zeta_{2^s}^{-1} \neq 0$. Consider a non-trivial element
 $\bar{\sigma}_v \in G_v/G_{KF_v}$ for some $\sigma_v \in G_v$. 
 Then any element $g \in G_v$ can be written as: 
 \begin{equation} \label{decom}
 g=\sigma_v^nh \,\, \text{for some} \,\,
 h \in G_{KF_v} \,\, \text{and} \,\, n \in \{0,1\}.
 \end{equation} Note that $n=0$
 when $g \in G_{KF_v}$. Using this decomposition one can extend
 $f_1$ and $f_2$ uniquely to $G_v$, call it $F_1$ and $F_2$,
 by defining $F_1(g)=f_1(h)$ and $F_2(g)=f_2(h)$.
 The inflation map 
$  \Inf :{}_2 \HH^2(G_{KF_v},(\bar{F_v}^\times)^{\Gal(KF_v|F_v)}) \hookrightarrow
 {}_2 {\HH}^2(G_v,\bar{F_v}^\times)$
 sends $c_{f_1}$ and $c_{f_2}$ to $c_{F_1}$ and $c_{F_2}$ respectively and hence
 their classes are same in their respective Brauer groups.
  
 \begin{definition}
 	\label{auxf}
 	(Auxillary functions for $s=2$)
 	Assume that $\zeta_{2^s}+\zeta_{2^s}^{-1}=0$.
 	If $K \subseteq F_v$, we define a function $f$ on $G_v(\subseteq G_K)$ by
 	\begin{eqnarray} \label{eqn2}
 	f(g)=
 	\begin{cases}
 	1, & \quad \text{if} \,\, \psi_2(g)=1,\\
 	a_{p^\dagger}, & \quad \text{if} \,\, \psi_2(g)=-1.
 	\end{cases}
 	\end{eqnarray}
 	When $KF_v|F_v$ is quadratic, we use the above decomposition
 	to extend $f$ uniquely to $G_v$, call it $F$.
 \end{definition}
 As before, both
 $c_f$ and $c_F$ have the same cocycle class in their respective Brauer groups. The cocycle table of $c_f$ is determined by the symbol $(t_2,a_{p^\dagger}^2)_v$ with $t_2$ as above.
 In order to keep same notation, we write $(-1)^{n_v}=(t_1,\zeta_{2^{r-1}})_v
 \cdot (t_2,(\zeta_{2^s}+\zeta_{2^s})^2)_v$, if $s\neq 2$, or $(t_2,a_{p^\dagger}^2)_v$,
 if $s=2$. 
\subsection{The case $K \subseteq F_v$ or $KF_v|F_v$ unramified quadratic}
Let $f_1,f_2,F_1,F_2,f,F$ and $n_v$ be as in the previous section.
We now prove Theorem~\ref{nonexceptional2} with this assumption.

\begin{proof}
	First assume that $\zeta_{2^s}+\zeta_{2^s}^{-1} \neq 0$.
	We now consider a function $\alpha'$ on $G_v$ defined as follows:
	$\alpha'=\frac{\alpha}{f_1f_2}$, if $K \subseteq F_v$ and
	$\alpha'= \frac{\alpha}{F_1F_2}$, if $KF_v|F_v$ unramified quadratic.
	With the assumption as above, we have  $I_v=I_{KF_v} \subseteq I_K$. 
	 We can apply Lemma~\ref{lem8} to determine $\alpha(i) \,\, \forall \,\, i \in I_v$. 
	If  $KF_v|F_v$ is unramified quadratic, we have $I_v=I_{KF_v}$ and hence,
	in the decomposition~(\ref{decom}) for any element of $I_v$,
	we must have $n=0$. As a map on $I_v$, we have $\frac{\alpha}{F_1F_2}=\frac{\alpha}{f_1f_2}$.
	By unravelling the definition of $\alpha'$, we see that 
	the two conditions of Lemma~\ref{lem1}
	are satisfied by $S=\alpha'$ and $t=\epsilon'$. 
	Hence, we obtain 
	\[
	  \mathrm{inv}_v(c_{\alpha'})=
  	  \frac{1}{2} \cdot v\big(\frac{\alpha'^2}{\epsilon'}(\Frob_v) \big)
	  =\frac{1}{2} \cdot v\big(\frac{\alpha^2}{\epsilon'}(\Frob_v) \big)
	  =\frac{1}{2} \cdot f_v \cdot v(a_{p'}^2 \epsilon(p')^{-1}) \,\,
	  \text{mod} \,\, \Z.
	\]
	We conclude that, 
	$
	\mathrm{inv}_v(c_{\alpha})=\frac{1}{2} \cdot 
	\big( f_v \cdot v(a_{p'}^2 \epsilon(p')^{-1})
	+  n_v \big) \,\, \text{mod} \,\, \Z.
	$
	
	We now assume that $\zeta_{2^s}+\zeta_{2^s}^{-1} = 0$. Define a map $\alpha'$ on $G_v$ by: 
	$\alpha'=\frac{\alpha}{f}$, if $K \subseteq F_v$ and
	$\alpha'= \frac{\alpha}{F}$, if $KF_v|F_v$ unramified quadratic.
	In this case, we have $I_v=I_{KF_v} \subseteq I_K$.
	As before since $\frac{\alpha}{F}=\frac{\alpha}{f}$ on $I_v$,
	using Lemma~\ref{g2}
	the two conditions of Lemma~\ref{lem1} are satisfied by
	$S=\alpha'$ and $t=\epsilon'$. Thus, we get the required result
	as before. 
\end{proof}

We now prove Corollary~\ref{maincoro}.
\begin{proof}
 If $N_2=2$, then the extension $K|\Q_2$ is unramified [cf. Section~\ref{galrep}].
By the Lemmas~\ref{l1} and~\ref{lem8},  we have $\alpha(\iota) \in F_v^\times$ for all $\iota \in I_K$. 
 Since $I_v=I_{KF_v} \subseteq I_K$,
 this is true for all $\iota \in I_v$.
  We choose $S=\alpha$ and use Lemma~\ref{lem1} to complete the proof.
\end{proof}

 \subsection{The case $KF_v|F_v$ ramified} 
 We now prove Theorem~\ref{nonexceptional2}  with the present assumption. 
\begin{proof}
	As $KF_v|F_v$ is a ramified quadratic extension, we choose an element $a_0 \in \sO_v^\times$ which is not a norm of the extension. 
	Let $i_0 \in G_v$ be an element which is mapped to 
	$a_0$ under the reciprocity map.
	Assume that $i_0$ is an element of the tame inertia part.
	Since $I_T(F_v)^2=I_T(KF_v)$, we have that $i_0 \in I_T(F_v) 
	\setminus I_T(KF_v)$. Since $i_0^2$ belongs to the tame inertia part
	of $K$, we deduce that
	$\alpha(i_0) \equiv \sqrt{d_0} \pmod {F_v^\times}$
	for some $d_0 \in F_v^\times$ [cf. Lemma~\ref{lem8}].
	
	First assume that $s \neq 2$. Define a function $f$ on $G_v$ by
	\begin{eqnarray} \label{ff}
	   f(g) = 1, \,\, \text{if} \,\, g \in G_{KF_v} \,\, \text{and} \,\,
	   f(g)=\sqrt{d_0}, \,\, \text{if} \,\, g \in G_v \backslash G_{KF_v}
	\end{eqnarray}  
	and define the function $F$ on
	$\Gal(KF_v|F_v)$ which is $1$ for the identity element and $\sqrt{d_0}$ else.
	Consider the inflation map 
	 $ \Inf:\HH^2(F_v(\pi)|F_v) \hookrightarrow \HH^2(\bar{F_v}|F_v)$  as before and it is easy to check  that
	 $\Inf^{-1}(c_f)=c_F$  and the cocycle table of $c_F$ is given by:
	\begin{center}
		\begin{tabular}{ |c|c|c|c| } 
			\hline
			& $1$ & $\sigma$ \\
			\hline
			$1$ & $1$ & $1$ \\
			\hline
			$\sigma$ & $1$ & $d_0$ \\ 
			\hline
		\end{tabular}
	\end{center}
	
	\noindent
	which gives the symbol $(\pi^2, d_0)_v$. 
	We now define an integer $n_v^{'}$ mod $2$
	by $(-1)^{n_v^{'}}=(-1)^{n_v} \cdot (\pi^2, d_0)_v$.
	Let $F_1,F_2$ be the functions 
	as in the previous theorem. 
	We apply Lemma \ref{lem1} to compute the invariant of the cocycle $[c_S]$ with 
	$
	S(g)=\frac{\alpha(g)}{F_1(g)F_2(g)f(g)}\,\, \text{on} \,\, G_v
	$
	and $t=\epsilon'$.

	We now deduce that $S(i) \in F_v^{\times}$,  
	for all $i \in I_v$.  Consider the element $i_0 \in G_v \setminus G_{KF_v}$ and  hence $\bar{i_0}$ is a
	non-trivial element in $G_v/G_{KF_v}$. We will consider 
	the decomposition~(\ref{decom}) with respect to 
	the element $i_0$ instead of $\sigma_v$. 
	For $i \in I_v \setminus I_{KF_v}$, we have
	$i=i_0 \tilde{i}$, 
	for some $\tilde{i} \in I_{KF_v} (\subseteq I_K)$. We obtain
	\[
	  S(i) 
	  = 
	  \frac{\alpha(i)}{F_1(i) F_2(i) f(i)}  
	  \equiv 
	  \frac{\alpha(i_0) \alpha(\tilde{i})}{F_1(i) F_2(i) f(i)} 
	  \equiv 
	  \frac{\alpha(\tilde{i})}{F_1(i) F_2(i)} 
	  \equiv 
	  \frac{\alpha(\tilde{i})}{f_1(\tilde{i}) f_2(\tilde{i})} 
	  \equiv 1 \,\, \text{mod} \,\, F_v^\times.
	\]
	The last equality follows from the fact that $\alpha(i) \equiv f_1(i) f_2(i) \pmod {F_v^\times}$ 
	for all $i \in  I_{KF_v}$. 
	As usual,  the cocycle $c_S$ can be decomposed as
	$c_S=c_\alpha c_{F_1} c_{F_2} c_f$. 
	By Lemma~\ref{lem1}, we deduce that
	$
	\mathrm{inv}_v(c_S)=
	\frac{1}{2} \cdot v\big(\frac{S^2}{\epsilon'}(\Frob_v) \big)
	=\frac{1}{2} \cdot v\big(\frac{\alpha^2}{\epsilon'}(\Frob_v) \big)
	=\frac{1}{2} \cdot f_v \cdot v(a_{p'}^2 \epsilon(p')^{-1}) \,\,
	\text{mod} \,\, \Z.
	$
	Hence, we conclude that:
    \[
      \mathrm{inv}_v(c_\alpha) = \mathrm{inv}_v(c_S) + 
      \mathrm{inv}_v(c_{f_1}) + \mathrm{inv}_v(c_{f_2}) +
      \mathrm{inv}_v(c_f)  \\
      = \frac{1}{2} \cdot 
      \big( f_v \cdot v(a_{p'}^2 \epsilon(p')^{-1}) + 
      n_v' \big) \,\, \text{mod} \,\, \Z.
    \]
	We now assume that $s=2$. 
	Let $\alpha'(g)=\frac{\alpha(g)}{F(g)f(g)}$ on $G_v$, where
	$F$ be the function as in Definition~\ref{auxf}
	and the function $f$ be as in (\ref{ff}). 
	Then the cocycle $c_\alpha$ can be decomposed as
	$c_\alpha=c_{\alpha'}c_fc_F$.
	Define an integer $n_v^{''}$ mod $2$ by
	$(-1)^{n_v^{''}}=(t_2,a_{p^\dagger}^2)_v \cdot (\pi^2,d_0)_v$.
	Two conditions of Lemma~\ref{lem1} are satisfied by
	$S=\alpha'$ and $t=\epsilon'$ as before.
	Hence, we have
	$
	\mathrm{inv}_v(c_{\alpha'})= \frac{1}{2} \cdot f_v \cdot v(a_{p'}^2 \epsilon(p')^{-1}) \,\,
	\text{mod} \,\, \Z
	$. 
	Thus,
	\[
	  \mathrm{inv}_v(c_\alpha) = \mathrm{inv}_v(c_{\alpha'})
	  + \mathrm{inv}_v(c_f) + \mathrm{inv}_v(c_F) \\
	  = \frac{1}{2} \cdot \big( f_v \cdot v(a_{p'}^2 \epsilon(p')^{-1})
	  +n_v^{''} \big) \,\, \text{mod} \,\, \Z.
	\]
\end{proof}

 The next lemma will determine the value of $d_0$
 in some special cases.
\begin{lemma}
	Assume  $F=\Q$ and the supercuspidal prime $p$ satisfies the second condition of Theorem~\ref{nonexceptional2}.
	The quantity $d_0$ involved in the error term
	is equal to $a_{p^{'''}}^2$ except $K=\Q_2(\sqrt{2})$ and $\Q_2(\sqrt{-6})$. 
\end{lemma}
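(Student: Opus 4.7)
The plan is to identify $i_0\in I_v\setminus I_{KF_v}$ explicitly via local class field theory and then to show that, outside the two exceptional $K$'s, the value $\alpha(i_0)$ coincides with $a_{p'''}$ modulo $\Q^\times$. Since $F=\Q$ we have $F_v=\Q_2$, and $K=KF_v$ is one of the six ramified quadratic extensions $\Q_2(\sqrt{d})$ with $d\in\{-1,3,2,-2,6,-6\}$. First I would write $i_0=[a_0]_v$ for a unit $a_0\in\Z_2^\times$ which is not a norm from $\sO_K^\times$; such $a_0$ exists because $[\Z_2^\times:N_{K/\Q_2}(\sO_K^\times)]=2$, and the class of $a_0$ modulo norms specifies $i_0$ modulo $I_{KF_v}$.

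Using the identity $\alpha(g)^{\gamma-1}=\chi_\gamma(g)$ from~(\ref{1}) together with the idelic formula~(\ref{idelic}), I obtain
\[
\alpha(i_0)^{\gamma-1}=\chi_{\gamma,2}(a_0)^{-1}\qquad\text{and}\qquad a_{p'''}^{\gamma-1}=\chi_{\gamma,2}(p'''),
\]
the latter using $p'''\equiv 1\pmod{N'}$ to annihilate the prime-to-$2$ part of $\chi_\gamma$. Hence $d_0\equiv a_{p'''}^2\pmod{\Q^{\times 2}}$ will follow from $\chi_{\gamma,2}(a_0\cdot p''')=1$ for every $\gamma\in\Gamma$. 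A convenient sufficient condition is $a_0\equiv p'''\pmod{2^{N_2}}$, since then $a_0\cdot p'''\equiv (p''')^2\equiv 1\pmod{2^{N_2}}$ is killed by every character $\chi_{\gamma,2}$ whose conductor divides $2^{N_2}$ --- a property I would derive from the minimality of $f$ combined with $\chi_\gamma^2=\epsilon^{\gamma-1}$. The task thereby reduces to exhibiting a non-norm unit $a_0$ in the prescribed congruence class of $p'''$ modulo $2^{N_2}$.

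Since $f$ is minimal and $K|\Q_2$ is ramified quadratic, the conductor formulas of Section~\ref{galrep} force $N_2\geq 4$, so every element of order $2$ in $(\Z/2^{N_2}\Z)^\times$ reduces modulo $8$ to either $1$ or $7$. Using the Hilbert-symbol identities $(u,-1)_2=(-1)^{(u-1)/2}$ and $(u,2)_2=(-1)^{(u^2-1)/8}$ together with multiplicativity, I would tabulate $(7,d)_2$ for each of the six $d$'s: one checks $(7,d)_2=-1$ precisely for $d\in\{-1,3,-2,6\}$, while $(7,d)_2=+1$ for $d\in\{2,-6\}$ and $(1,d)_2=+1$ always. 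Hence, for $d\notin\{2,-6\}$, Dirichlet's theorem produces a prime $p'''\equiv 7\pmod 8$ satisfying the remaining congruences, and any lift $a_0\equiv p'''\pmod{2^{N_2}}$ is automatically a non-norm, giving the desired equality. For $d\in\{2,-6\}$, every order-$2$ class in $(\Z/2^{N_2}\Z)^\times$ corresponds to a norm from $\sO_K^\times$, so no such $a_0$ exists and the argument genuinely breaks down --- which is precisely the content of the stated exception.

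The main obstacle is not the Hilbert-symbol bookkeeping but the underlying conductor claim: showing that the conductor of each $\chi_{\gamma,2}$ truly divides $2^{N_2}$. The identity $\chi_\gamma^2=\epsilon^{\gamma-1}$ only bounds the conductor of the square, so a careful analysis of how squaring interacts with $2$-power conductors (using the minimality of $f$ to ensure $N$ is the exact level) is needed to justify the sufficient condition $a_0\equiv p'''\pmod{2^{N_2}}$. This is the step most likely to require delicate argument.
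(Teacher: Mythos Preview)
Your argument is correct and lands on the right exceptional set, but the paper takes a shorter path. Instead of engineering $a_0$ to lie in the residue class of $p'''$ modulo $2^{N_2}$, the paper simply sets $a_0=-1$ and quotes the fact that $-1$ is a non-norm for $K/\Q_2$ exactly when $d\notin\{2,-6\}$; since $-1\equiv 7\pmod 8$, your Hilbert-symbol table for $(7,d)_2$ is literally the same computation. With $i_0=[-1]$ in hand the paper then reads off
\[
\alpha(i_0)^{\gamma-1}=\chi_\gamma(i_0)=\chi_{\gamma,2}(-1)^{-1}=\chi_\gamma(p''')=a_{p'''}^{\gamma-1}
\]
in one line, giving $\alpha(i_0)\equiv a_{p'''}\pmod{F_v^\times}$ and hence $d_0=a_{p'''}^2$. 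This sidesteps your mid-proof re-selection of $p'''$ via Dirichlet, which is awkward because $p'''$ is fixed in advance. Your longer route does buy two things the paper leaves implicit: the middle equality above is forced by your congruence $a_0\equiv p'''\pmod{2^{N_2}}$ rather than relying on $p'''\equiv -1$, and your remark that for $d\in\{2,-6\}$ \emph{every} order-$2$ unit class is already a norm shows the exception is intrinsic and not an artifact of the particular choice $a_0=-1$.

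The conductor question you single out as ``the main obstacle'' is over-weighted. That each extra-twist character $\chi_\gamma$ has conductor dividing $N$ is part of the standard Ribet--Momose framework (both $f$ and $f^\gamma=f\otimes\chi_\gamma$ are newforms of the same level $N$, so $\chi_\gamma$ cannot introduce new ramification), and the paper uses it without comment here and elsewhere. Once that is granted, your sufficient condition $a_0\equiv p'''\pmod{2^{N_2}}$ is immediate.
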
	

\begin{proof}
	When $F=\Q$, the ramified quadratic extension becomes 
	$K|\Q_2$.	
	We have that $a_0=-1$ is not a norm of the extension $K|\Q_2$ 
	except $K=\Q_2(\sqrt{d})$ with $d=2,-6$, see
	\cite[p. $34$]{MR2665139}.
	In this case,
	we find out the value of $d_0$.
	For a prime $p^{'''}$ chosen before,
	$
	\alpha(i_0)^{\gamma-1} = \chi_\gamma(i_0)  
	\overset{(\ref{idelic})}{=} \chi_{\gamma,2}(-1)^{-1}    
	= \chi_{\gamma}(p^{'''})
	= \alpha(\Frob_{p^{'''}})^{\gamma-1},
	$
	for all $\ga \in \Ga$ and so 
	$\alpha(i_0) \equiv a_{p^{'''}}  \,\, \text{mod} 
	\,\, F_v^\times$.
\end{proof}

 \section{Ramifications for primes lying above non-dihedral supercuspidal prime  $p=2$}
 Let $\rho_2(f)$
 be the local representation of the Weil-Deligne group of $\Q_2$ 
 associated to $f$ at the prime $p=2$ and denote by $\tilde{\rho_2}$ the projective 
 image of $\rho_2:=\rho_2(f)$.  When the inertia group $I_2$ acts irreducibly, 
 the image of $\tilde{\rho_2}$ is one of three \emph{exceptional} groups
 $A_4,S_4,A_5$. The $A_5$ case
 cannot occur since the Galois group $G_2:=\Gal(\bar{\Q}_2|\Q_2)$
 is solvable. Weil proved in \cite{MR0379445} that over $\Q$,
 the $A_4$ case also does not occur, 
 so the only exceptional case has image $S_4$. 
 For $D=\mathrm{det}(\rho_2(f))$ and $d=\frac{\alpha^2}{D}$, 
 we have a cocycle class decomposition $[X]=[c_D] \cdot [c_d]$, where 
 the cocycles $c_D$ and $c_d$ are given by  
 $
 c_D(g,h)= \frac{\sqrt{D(g)} \sqrt{D(h)}}{\sqrt{D(gh)}}
 , \quad c_d(g,h)= \frac{\sqrt{d(g)} \sqrt{d(h)}}{\sqrt{d(gh)}}$
 respectively. In this section, we find the local Brauer class $[X_v]=[c_D]_v \cdot [c_d]_v$ for any $v \mid 2$. Let $D_{K'}$ be the discriminant of the field $K'$ cut out by $\ker(d)$.
 
The following Lemma is a straightforward adaptation in our setting of 
Lemma $7.3.17$ of \cite{Banerjee:Thesis:2010}. 
 \begin{lemma} \label{D-1}
 	The $2$-cocycle $[c_D]_v=1$ if and only if $D(-1)=1$.
 \end{lemma}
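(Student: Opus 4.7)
The plan is to reinterpret the vanishing of $[c_D]_v$ as the existence of a continuous character square root of $D$, then reduce to a structural calculation on $F_v^\times$ via local class field theory.

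First, since $D\colon G_v\to \bar{F_v}^\times$ is itself a character, the $\mu_2$-valued $2$-cocycle $c_D$ is a coboundary precisely when the set-theoretic square roots $\sqrt{D(g)}$ can be chosen coherently into a homomorphism, i.e.\ when there exists a continuous character $\chi\colon G_v\to \bar{F_v}^\times$ with $\chi^2=D$. (Two different set-theoretic choices of $\sqrt{D}$ differ by a coboundary, so $[c_D]_v$ depends only on $D$.) Under the local reciprocity isomorphism $G_v^{\mathrm{ab}} \cong \widehat{F_v^\times}$, the question becomes whether the character $D$, now viewed as a continuous character of $F_v^\times$, admits a character square root, and $D(-1)$ in the statement is precisely the value of $D$ at $-1\in F_v^\times$.

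Next, I would exploit the topological decomposition
\[
F_v^\times \;\cong\; \pi_v^{\Z} \times \mu_{q-1}(F_v) \times U_v^{(1)},
\]
where $q=|\sO_v/\pi_v|$ is a power of $2$, so $q-1$ is odd and $U_v^{(1)}$ is a pro-$2$ group. On $\pi_v^{\Z}$ any character is a square because $\bar{F_v}^\times$ is $2$-divisible. On $\mu_{q-1}(F_v)$ squaring is an automorphism, so its characters are always squares. The pro-$2$ group $U_v^{(1)}$ splits as a torsion subgroup $\mu_{2^n}(F_v)$ (which contains $-1$, since $-1\in U_v^{(1)}$ whenever $v\mid 2$) times a free $\Z_2$-module of rank $[F_v:\Q_2]$; on the latter, continuous square roots always exist because squaring is a topological automorphism of $\Z_2$ and $\bar{F_v}^\times$ is $2$-divisible. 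The only obstruction thus lives on $\mu_{2^n}(F_v)$: a character of a cyclic group of order $2^n$ is a square iff its restriction to the unique order-two subgroup $\{\pm 1\}$ is trivial.

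Combining these observations, $D$ admits a character square root if and only if $D(-1)=1$, which is the desired equivalence. The main subtlety is verifying that no obstruction is introduced by the free pro-$2$ part of $U_v^{(1)}$ or by the continuity requirement; once this is done, the cyclic-group calculation on $\mu_{2^n}(F_v)$ (equivalently, Pontryagin duality applied to the squaring map on $F_v^\times$, whose kernel is exactly $\{\pm 1\}$) finishes the proof.
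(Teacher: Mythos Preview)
Your proof is correct. The paper itself does not give an argument for this lemma; it simply cites Lemma~7.3.17 of the first author's thesis, so there is no in-paper proof to compare against.

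One point worth making explicit: in your first step you assert that $[c_D]_v=0$ in $\Br(F_v)=H^2(G_v,\bar F_v^{\,\times})$ (with the Galois action on coefficients) is equivalent to $D$ admitting a continuous character square root. This equivalence is immediate for the class of $c_D$ in $H^2(G_v,\mu_2)$ with trivial action, but to pass to $\Br(F_v)$ one needs the injectivity of $H^2(G_v,\mu_2)\hookrightarrow \Br(F_v)$, which follows from Hilbert~90 applied to the Kummer sequence $1\to\mu_2\to\bar F_v^{\,\times}\to\bar F_v^{\,\times}\to 1$; without this, a coboundary relation in $\Br(F_v)$ would involve the twisted term $g(\beta(h))$ and does not directly force $\sqrt{D}$ to be a homomorphism. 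Once that injection is noted, your decomposition of $F_v^\times$ (or, equivalently, the Pontryagin-duality observation that the cokernel of squaring on the continuous character group is dual to the $2$-torsion $\{\pm 1\}$ of $\widehat{F_v^\times}$) completes the argument cleanly.
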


 Recall that  $f^{\gamma} \equiv f \otimes \chi_{\gamma}$ implies that
 $\rho_2(f)^{\gamma} \sim \rho_2(f)  \otimes \chi_{\gamma}$
 and taking determinant gives $\mathrm{det}(\rho_2(f))^{\gamma}=
 \mathrm{det}(\rho_2(f))(\chi_{\gamma})^2$. 
 Thus, we obtain $d(g)^{\gamma-1}=1$ and so $d(g) \in F_v^\times$. 
 Consider the topological space  $F_v^\times/F_v^{\times 2}$ with  the discrete topology. 
 We deduce that  $ d: G_v \rightarrow F_v^\times/F_v^{\times 2}$
 is a continuous homomorphism and hence
 $G_v/ \text{ker}(d) \cong \Gal(K'|F_v) \cong (\Z/2\Z)^m$
 for some elementary $2$-extension $K'$ of $F_v$.
 For each $i=1, \cdots ,m$, let $\sigma_i \in \Gal(K'|F_v)$ be the element corresponding to 
 $(0, \cdots,1, \cdots, 0) \in (\Z/2\Z)^m$ ($1$ in the $i$-th position). Define $t_j \in F_v^\times (1 \leq j \leq m)$
 as follows:
 \[
 \sigma_i(\sqrt{t_j})=\delta_{ij} \sqrt{t_j}.
 \]
 Lift $\sigma_i$ to an element of $G_v$ denoted also by $\sigma_i$
 and set $d_i:=d(\sigma_i) \in F_v^\times/F_v^{\times 2}$. 
 As in \cite{MR1650241}, the class of $c_d$ is given by
$[c_d]_v=(t_1, d_1)_v \cdots (t_m,d_m)_v$.

 \begin{prop} \label{DK}
    \[
    [c_d]_v=(2, D_{K'})_v.
    \]
 \end{prop}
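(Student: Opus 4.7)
My strategy is to start from the formula $[c_d]_v = (t_1, d_1)_v \cdots (t_m, d_m)_v$ established above (following \cite{MR1650241}), and simplify the product using Kummer theory, bilinearity of the Hilbert symbol, and the conductor-discriminant formula.

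First I would invoke Kummer theory applied to the elementary abelian $2$-extension $K' = F_v(\sqrt{t_1}, \ldots, \sqrt{t_m})$. Since $d: G_v \to F_v^\times/F_v^{\times 2}$ factors through $\Gal(K'|F_v) \cong (\Z/2)^m$ and is injective on this quotient, the elements $d_i = d(\sigma_i)$ generate a rank-$m$ subgroup of $F_v^\times/F_v^{\times 2}$. The equality $d = \alpha^2/D$ together with Proposition~\ref{prop1}(1) constrains each $d_i$: modulo squares it must lie in the subgroup generated by the $t_j$'s and the uniformizer $2$, so I may write $d_i \equiv 2^{\epsilon_i}\prod_j t_j^{a_{ij}} \pmod{F_v^{\times 2}}$ for suitable $\epsilon_i, a_{ij} \in \{0,1\}$.

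Second, using bilinearity and symmetry of the Hilbert symbol, I would rearrange
\[
\prod_i (t_i, d_i)_v = \prod_i (t_i, 2)_v^{\epsilon_i} \cdot \prod_{i,j} (t_i, t_j)_v^{a_{ij}}.
\]
The cross terms organize via $(t_i, t_j)_v = (t_j, t_i)_v$ and $(t_i, t_i)_v = (t_i, -1)_v$, collapsing to contributions that fold into the $2$-adic data. I would then apply the conductor-discriminant formula $D_{K'} = \prod_\chi \mathfrak{f}(\chi)$, with $\chi$ running over characters of $\Gal(K'|F_v)$, to write $D_{K'}$ as a product of discriminants of the quadratic subextensions $F_v(\sqrt{\prod_j t_j^{b_j}})/F_v$. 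Comparing the resulting symbol $(2, D_{K'})_v$ with the simplified product yields the claim.

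The main obstacle will be the residue characteristic $2$ subtleties: for each quadratic subextension $F_v(\sqrt{t})/F_v$, the local discriminant depends delicately on the class of $t$ modulo $4$ or $8$, so an honest case analysis of the square classes of the $t_i$'s and their products is required to match both sides. In addition, pinning down the exponents $\epsilon_i$ in the expansion of $d_i$ in terms of the $t_j$'s and $2$ relies on the explicit structure of $\rho_2(f)$ in the non-dihedral $S_4$ exceptional case, together with the formula $d = \alpha^2/D$ and Lemma~\ref{D-1}.
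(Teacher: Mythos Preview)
Your strategy misses the key structural observation that makes the computation essentially trivial. You attempt a general Kummer-theoretic manipulation valid for arbitrary $m$, but the paper shows directly that $m = 1$. The argument is: since $d(g) \equiv \mathrm{Tr}(\rho_2(g))^2/\det(\rho_2(g)) \pmod{F_v^{\times 2}}$ whenever the trace is nonzero, and since $\ker(\tilde{\rho_2}) \subset \ker(d)$ (checked by evaluating on scalar matrices), the map $d$ factors through the projective image $S_4$. But the only $2$-group quotients of $S_4$ are trivial and $\Z/2\Z$, so $m \leq 1$. Moreover, for elements of projective order $1,2,3,4$ one has $\mathrm{Tr}^2/\det \in \{4,0,1,2\}$, so the nontrivial value of $d$ modulo squares is exactly $2$; taking $g$ with projective image a $4$-cycle forces $m=1$ and $d_1 \equiv 2$. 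Then $K'|F_v$ is quadratic, $t_1 = D_{K'}$, and the single term $(t_1,d_1)_v = (D_{K'},2)_v$ is the whole product.

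Your proposed route has a genuine gap beyond just being more complicated. The assertion that $d_i \equiv 2^{\epsilon_i}\prod_j t_j^{a_{ij}} \pmod{F_v^{\times 2}}$ is unjustified: Proposition~\ref{prop1}(1) only says $\alpha^2 \equiv \epsilon \pmod{F^\times}$, which gives no constraint on $d = \alpha^2/D$ relative to the square classes $t_j$ and $2$. Over a general $2$-adic field $F_v$ the group $F_v^\times/F_v^{\times 2}$ has rank $[F_v:\Q_2]+2$, and nothing in your sketch forces the image of $d$ into the subgroup generated by the $t_j$'s and $2$. Without the $S_4$ input you cannot identify the $d_i$'s, and the conductor--discriminant manipulation at $p=2$ that you propose would not collapse to a single symbol even if the expansion were valid.
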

\begin{proof}
We first prove that
  $\text{ker}(\tilde{\rho_2})  \subset \text{ker}(d)$. 
Suppose $g \in  \text{ker}(\tilde{\rho_2}) $, then $\rho_2(g)$ is a scalar matrix 
$\rho_2(g)=\left(\begin{smallmatrix}
a & 0\\
0 & a\\
\end{smallmatrix}\right)$.
Thus, the quantity $\mathrm{Tr}(\rho_2(g)^2)/\mathrm{det}(\rho_2(g))$ is $4$. As trace 
is non-zero, using the part $(2)$ of Proposition~\ref{prop1}, 
this quantity is equal to $d(g)$ up to an element of $F^{\times 2}$
and so $d(g) \equiv 1$ mod $F_v^{\times 2}$.
Hence, we get the inclusion.

Thus, there is an onto map
$S_4=G_v \slash  \text{ker}(\tilde{\rho_2}) \rightarrow G_v \slash \text{ker}(d)$. 	
Only $2$-subgroup that can be quotient of $S_4$ is the trivial group or $\Z/2\Z$. 
Thus $m$ is either $0$ or $1$. 
Since every element of the projective image of $\rho_2$ (which is $S_4$)
has order $1,2,3$ or $4$, we have 
$\mathrm{Tr}(\rho_2(g)^2)/\mathrm{det}(\rho_2(g)) \in \{4,0,1,2\}$ for every $g \in G_v$.
For all $g \in G_v$, observe that $d(g) \in \{4,0,1,2\}$  up to 
$F_v^{\times 2}$ \cite[p. $264$]{MR0419358}. 
We conclude that
either $d(g) \in F_v^{\times 2}$ or $d(g) \equiv \sqrt{2}$ mod $F_v^\times$,
for all $g \in G_v$. The value of $m \in \{0,1\}$ depends on this.
Since the projective image of $\rho_2$ is $S_4$, there is an element
$g \in G_v$ whose projective image in $S_4$ is a $4$-cycle.
For such an element $g$, we have $d(g) \equiv 2$ mod $F_v^{\times 2}$
and so we conclude that $m=1$. Thus, the field $K'$ cut out by the 
kernel of the homomorphism $d$ must be a quadratic field. 
 As
$t_1=D_{K'}$ (the discriminant of $K'$) and $d_1=2$ up to
an element of $F_v^{\times 2}$,  the class $[c_d]_v$ is determined by the symbol $(2,D_{K'})_v$. We obtain the result. 
\end{proof}

 We now prove Theorem~\ref{exceptional} and Corollary~\ref{Depsilon}.
\begin{proof}
  Since $[X_v] \sim [c_d]_v \cdot [c_D]_v$, we obtain the Theorem~\ref{exceptional} 
  for a non-dihedral prime $p=2$. 
 
 By the isomorphism $(*)$ [cf. Section~\ref{galrep}] we can replace
 $D$ by det$(\rho_{f,2})=\chi_2^{k-1} \epsilon$, where $\chi_2$
 is the $2$-adic cyclotomic character. When $k$ is odd, the cocycle
 class of $c_D$ is same as the cocycle class of $c_\epsilon$, that is,
 $c_D \sim c_\epsilon$, where the $2$-cocycle $c_\epsilon$ is defined
 as follows:
 $c_\epsilon(g,h)= \frac{\sqrt{\epsilon(g)} \sqrt{\epsilon(h)}}{\sqrt{\epsilon(gh)}}$.
Apply Lemma~\ref{D-1} and observe that $[c_D]=[c_\epsilon]$ for odd $k$, we obtain  Corollary~\ref{Depsilon}.
 \end{proof}

 \section{Numerical Examples}
 For an odd prime $p$, our results are concurrent with the theorems proved in \cite{MR3391026}. 
 However, the example $(5)$  of loc. cit. shows that $X_v$ is not determined by $m_v$ 
 if $p$ is an unramified ``bad" level zero supercuspidal prime. This example corroborates 
 our Theorem~\ref{extrathm}.
 
 To support our results, we give numerical examples 
 about local ramifications at supercuspidal prime $p=2$. 
 The examples are provided in the table of
 \cite{MR2146605}. Using {\texttt Sage} and $L$-function and modular forms database ({\texttt LMFDB}), we determine the $v$-adic valuation of the trace
 of adjoint lift at the prime $p'$. 
 \begin{enumerate}
 	\item 
 	     $f \in S_3(20,[0,3])$ with $E=\Q(\sqrt{-1})$ and $F=\Q$. Since $N_2=2$ 
 	     the prime $p=2$ is an unramified dihedral supercuspidal prime. 
 	     We choose $p'=17$. Using {\texttt Sage} we check that $a_{p'}=1-i$ and
 	     hence $v_2(a_{p'}^2\epsilon(p')^{-1})=1$,
 	     so $X_v$ is ramified.
 	\item
 	     $f \in S_5(36,[0,3]), E=\Q(\sqrt{-2})$ and $F=\Q$. Here $p=2$ is an
 	     unramified dihedral supercuspidal prime as $N_2=2$. 
 	     We choose $p'=29$ and compute that $a_{p'}^2=a_{29}^2=-421362=-2 \cdot 459^2$.
 	     Hence $v_2(a_{29}^2\epsilon(29)^{-1})=1$, so
 	     $X_v$ is ramified.
 	\item
 	     $f \in S_3(24,[0,0,1]), E=\Q(\sqrt{-2}), F=\Q$. Here $p=2$ is a
 	     non-dihedral supercuspidal prime for $f$ \cite[Section $6$]{MR2223979}.
 	     We have $D_K=64$ \cite[Section $6$]{MR2223979}
 	     and $\epsilon(-1)=-1$.
 	     Hence $X_v$ is ramified.
 \end{enumerate}

	\bibliographystyle{crelle}
	\bibliography{Eisensteinquestion.bib}

\end{document}